\definecolor{newblue}{rgb}{0.0, 0.48, 0.65}
\definecolor{light-gray}{gray}{0.95}
\DeclarePairedDelimiter\abs{\lvert}{\rvert}%
\DeclarePairedDelimiter\norm{\lVert}{\rVert}%
\DeclarePairedDelimiter\pair{\langle}{\rangle}%
\let\oldabs\abs
\def\abs{\@ifstar{\oldabs}{\oldabs*}}
\let\oldnorm\norm
\def\norm{\@ifstar{\oldnorm}{\oldnorm*}}
\let\oldpair\pair
\def\pair{\@ifstar{\oldpair}{\oldpair*}}
\tikzset{join/.code=\tikzset{after node path={%
\ifx\tikzchainprevious\pgfutil@empty\else(\tikzchainprevious)%
edge[every join]#1(\tikzchaincurrent)\fi}}}
\tikzset{>=stealth',every on chain/.append style={join},
         every join/.style={->}}
\tikzstyle{labeled}=[execute at begin node=$\scriptstyle,
\newtheorem{theorem}{Theorem} [section] 
\newtheorem{lemma}[theorem]{Lemma} 
\newtheorem{proposition}[theorem]{Proposition}
\theoremstyle{definition}
\newtheorem{definition}{Definition}[section]
\theoremstyle{remark}
\newtheorem{remark}{Remark}[section]
\newcommand{\re}{\mathbb{R}}
\newcommand{\dx}{\, \textnormal{d}x}
\newcommand{\ds}{\, \textnormal{d}s}
\newcommand{\dt}{\, \textnormal{d}t}
\newcommand{\eps}{\varepsilon}
\newcommand{\uoneeps}{u_1^{\varepsilon}}
\newcommand{\utwoeps}{u_2^{\varepsilon}}
\newcommand{\ueps}{u^{\varepsilon}}
\newcommand{\veps}{v^{\varepsilon}}
\newcommand{\vepshat}{\widehat{v}^{\varepsilon}}
\title{An aggregation model of cockroaches with fast-or-slow motion dichotomy}
\author[1]{J\'{a}n Elia\v{s}}%\thanks{Email: jan.elias@uni-graz.at}}
\author[2]{Hirofumi Izuhara}
\author[3]{Masayasu Mimura}
\author[1]{Bao Q. Tang}
\affil[1]{Institute of Mathematics and Scientific Computing, University of Graz, Heinrichstrasse 36, 8010 Graz, Austria}
\affil[2]{Faculty of Engineering, University of Miyazaki, 1-1 Gakuen Kibanadai Nishi, Miyazaki 889-2192, Japan}
\affil[3]{Graduate School of Integrated Sciences for Life, Hiroshima University, 1-3-1 Kagamiyama, Higashi-Hiroshima City, Hiroshima, Japan 739-8526}
\date{}
\newcommand{\mr}{\mathrm{mr}}
\newcommand{\yoneeps}{y_1^{\eps}}
\newcommand{\ytwoeps}{y_2^{\eps}}
\newcommand{\na}{\nabla}
\newcommand{\LQ}[1]{L^{#1}_{x,t}}
\newcommand{\intO}{\int_{\Omega}}
\newcommand{\bra}[1]{\left(#1\right)}
\newcommand{\sbra}[1]{\left[#1\right]}
\newcommand{\pa}{\partial}
\newcommand{\intQT}{\iint_{Q_T}}
\begin{document}

%\linenumbers

% ====================================================================================

\maketitle

\begin{abstract} 
We propose a mathematical model, namely a reaction-diffusion system, to describe social behaviour of cockroaches. An essential new aspect in our model is that the dispersion behaviour due to overcrowding effect is taken into account {as a counterpart to commonly studied aggregation}. This consideration leads to an intriguing new phenomenon which has not been observed in the literature. Namely, due to the competition between aggregation towards areas of higher concentration of pheromone and dispersion avoiding overcrowded areas, 
the cockroaches aggregate more at the transition area of pheromone. Moreover, we also consider the fast reaction limit where the switching rate between active and inactive subpopulations tends to infinity. By utilising improved duality and energy methods, together with the regularisation of heat operator, we prove that the weak solution of the reaction-diffusion system converges to that of a reaction-cross-diffusion system.

\medskip
\noindent \textit{Keywords: German cockroach; Aggregation vs. Dispersion; Reaction-diffusion equations; Fast reaction limit; Improved duality method}
\end{abstract}

\tableofcontents

\section{Introduction}

\subsection{Social behaviour of cockroaches}

Aggregation is a common behaviour of many species from bacteria to vertebrates which, among other benefits, increases the survival and reproduction success of a given individual \cite{Parrish-1999}. A typical example of such behaviour is the aggregation of soil-dwelling amoeba \textit{Dictyostelium discoideum}. In the absence of nutrients resource, the amoeba secretes a chemical signal, cAMP, attracting other amoebae to form a fruit body. %Amoebae sense the gradient of the chemical signal by receptors placed at their own body and move toward a place with the greatest amount of cAMP secretions. 
This directional movement along a chemical concentration gradient is called chemotaxis and it became a basis for the famous Patlak-Keller-Segel model \cite{Patlak-1953, Keller-1970}. 

\medskip
Cluster formations are also typical for various cockroach species, such as the German cockroach, \textit{Blattella germanica}, and the American cockroach, \textit{Periplaneta americana}. Cockroaches respond to the airflow, as well as to the external objects (including predators) using a variety of mechanoreceptors located on their antennae \cite{Okada-2016}. The behaviour of cockroaches is further determined by a broad range of chemicals from toxins, food, pathogens, and pheromones which are sensed through chemoreceptors \cite{Harrison-2018}. Many cockroaches prefer dark, humid and enclosed spaces as shelters close to food resources. Notably, most cockroach species are gregarious, living in groups, which brings them many benefits ranging from reduction in predation risk, increased mating to humidity regulation and thermoregulation \cite{Dambach-1999}. A wide range of nonexclusive criteria such as mutual attraction, groups size, spatial and temporal variation in social structure, influences cockroach aggregation behaviour and to this day there is no definitive theory explaining the mechanism behind aggregation \cite[Chap 8]{Bell-2007}. Some cockroach species including \textit{Blattella germanica} are known to secrete an aggregation pheromone responsible for clustering
%a higher concentration of pheromones results in much less active movement and aggregation of cockroach larvae
\cite{Ishii-1967, Ishii-1970}. %, Jeanson-2005
 Indeed, the aggregation pheromone is shown to act initially as an attractant for cockroaches to group near the pheromone source, \cite{Bell-1972}, as well as to inhibit locomotion of individuals so that they  remain at the pheromone source \cite{Burk-1973}. Experimental and theoretical studies of shelter selection by cockroach populations further reveal that when the population size of cockroaches at the shelter increases, the individual probability of leaving it decreases and, consequently, newcomers are confined to the shelter. In particular, if the size of a shelter and thus the carrying capacity for the entire population is large enough, a single aggregation site is collectively selected \cite{Ame-2006, Sempo-2013}. The aggregation of cockroaches in this case likely results from a social amplification of a positive feedback via pheromone signal \cite{Sempo-2013}.

\medskip
A theory of the ideal free distribution however assumes that individuals compete for resources, e.g., shelters, and that each individual will go to a place where the chance of success is highest and, in particular, the individual will occupy a site among all sites with equally suitable conditions for survival where the number of competitors is lowest \cite{Fretwell-1969}. Sempo and coworkers \cite{Sempo-2013} show experimentally that a higher level of habitat fragmentation, which was achieved by a larger number of shelters of smaller sizes while the % population size of cockroaches and 
total carrying capacity of the habitat was maintained constant, cockroaches are randomly and equally distributed between the shelters.  
Thus, the smaller the shelter is, the fewer individuals it can contain and the probability of joining a shelter decreases with the group size occupying the shelter, % number of sheltered individuals, 
leading to a negative feedback loop via crowding and, consequently, to dispersal of cockroaches. % which move towards a more suitable places. 

\medskip
Aggregation and dispersal, as two non-linear processes shaping the social behaviour of cockroaches, suggest the existence of a threshold which corresponds to a critical number of individuals occupying one shelter \cite{Sempo-2013}. Roughly speaking, whenever the population size of cockroaches is smaller than the critical threshold, cockroaches aggregate due to a positive feedback driven by the pheromone concentration. However, as soon as the critical threshold is reached dispersal prevails over aggregation despite high concentration of the chemoattractant.
In this paper we propose a mathematical model for the binary choice of cockroaches between aggregation and dispersal, as just described, based on a fast-or-slow motion dichotomy.

\subsection{Mathematical modelling}
In our modelling framework we assume that
\begin{itemize}
\item cockroaches are divided into two groups based on their ability to move: individuals in one group move actively (``fast"), while the mobility of the second group is inhibited (``slow")---hence, the fast-or-slow dichotomy; and
\item cockroaches can change their locomotive behaviour depending on the concentration of the aggregation pheromone.%: with the increasing concentration of the pheromone, either aggregation dominates over dispersal or vice versa. 
\end{itemize}
 The spread of cockroaches in space and time and switching between the groups based on the pheromone concentration is precisely described by the following equations:
\begin{equation}\label{model1}  \left\{ \begin{aligned}
\partial_t u_{1} & =  d \Delta u_1  - \frac{1}{\eps}\left( r(v)u_1 - (1-r(v))u_2\right), \\
\partial_t u_{2} & = (d+D) \Delta u_2  + \frac{1}{\eps}\left( r(v)u_1 - (1-r(v))u_2\right), \\
\partial_t v & =  D_v \Delta v + \alpha(u_1+u_2) - \beta v, \\
\end{aligned} \right. \end{equation}
where $u_1$ and $u_2$ denote the population densities of slow and fast cockroach group, respectively, and $v$ is the concentration of aggregation pheromone. Individuals in both groups can move freely and randomly in space with constant rates of diffusion $d$ and $d+D$, respectively, where $d$ and $D$ are positive constants; in general $d \ll D$. Switching between the subpopulations $u_1$ and $u_2$ is determined by a probability distribution function $r$ which is a function of the pheromone concentration $v$ as shown in Figure~\ref{fig:01}. The rate of switching is given by $1/\eps$ for some $\eps >0$.

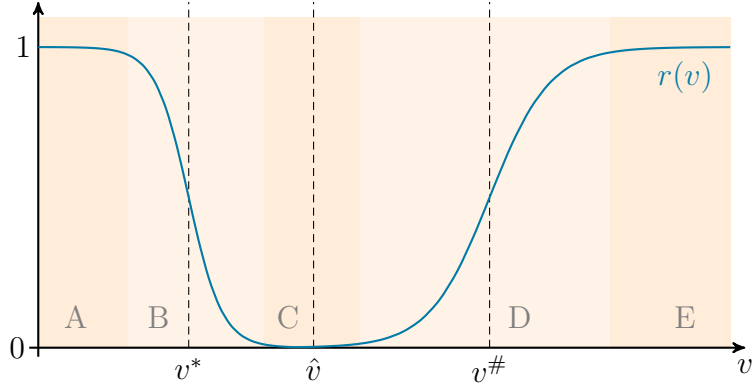
\begin{figure}[htbp]
\begin{center}
\begin{tikzpicture}[scale=4] % every node/.style={scale=1.3}, line cap=round,line join=round,>=triangle 45,x=1.0cm,y=1.0cm
\fill [orange!15] (0.0,0.0) rectangle (0.3,1.1); % A
\fill [orange!10] (0.3,0) rectangle (0.75,1.1); % B
\fill [orange!15] (0.75,0) rectangle (1.07,1.1); % C
\fill [orange!10] (1.07,0) rectangle (1.9,1.1); % D
\fill [orange!15] (1.9,0) rectangle (2.3,1.1); % E
\draw[->, thick] (-0.03, 0) -- (2.35, 0) node[below] {$v$}; % x-coordinate
\draw[->, thick] (0, -0.03) -- (0, 1.15); % y-coordinate
\draw[densely dashed] (0.5, 0) -- (0.5, 1.15); \draw (0.5,-0.07) node {$v^\ast$};  
\draw[densely dashed] (1.5, 0) -- (1.5, 1.15); \draw (1.5,-0.07) node {$v^\#$};  
\draw[densely dashed] (0.915, 0) -- (0.915, 1.15); \draw (0.915,-0.07) node {$\hat{v}$};  
\draw[domain=0:0.85,smooth,variable=\x,newblue,thick] plot ({\x},  {0.5*(1-tanh(9*(\x-0.5)))});
\draw[domain=0.85:2.3,smooth,variable=\x,newblue,thick] plot ({\x},  {0.5*(1+tanh(5*(\x-1.5)))});
\draw[newblue] (2.15,0.9) node {$r(v)$};
\draw[black!50] (0.125,0.1) node {A};
\draw[black!50] (0.4,0.1) node {B};
\draw[black!50] (0.83,0.1) node {C};
\draw[black!50] (1.6,0.1) node {D};
\draw[black!50] (2.15,0.1) node {E};
\draw (-0.07,0) node {$0$};
\draw (-0.05,1) node {$1$};
\end{tikzpicture}
\caption{A typical example of the probability function $r$: $r(v) = \frac{1}{2}(1-\tanh(\gamma_1(v-v^\ast)))$ for $0 \le v \le \hat{v}$ and $r(v) = \frac{1}{2}(1+\tanh(\gamma_2(v-v^\#)))$ for $v > \hat{v}$, where $v^\ast, v^\#, \hat{v}, \gamma_1$ and $\gamma_2$ are positive constants. Depending on the concentration of  aggregation pheromone, $v$, we observe: dispersal of cockroaches (region A), change of cockroach motility from dispersal to aggregation (B), aggregation (C), change of cockroach motility from aggregation to dispersal (D), and dispersal again (E).}
\label{fig:01}
\end{center}
\end{figure}

The function $r$ is a real-valued function with values between zero and one and such that $r(0) = 1$, $r$ is decreasing for $0 < v \le \hat{v}$ and increasing for $v > \hat{v}$ for some $\hat{v} > 0$, and such that $r(v)$ tends to 1 as $v$ grows above all limits. If the concentration $v$ is small so that $r(v) \approx 1$ (region A in Figure~\ref{fig:01}), then $r(v)u_1 - (1-r(v))u_2 \approx u_1$, which together with the sign before $1/\eps$ in the equations for $u_1$ and $u_2$ implies switching of the mobility of cockroaches from slow, $u_1$, to fast, $u_2$. In this case, dispersal of cockroaches prevails over aggregation. If the concentration $v$ is such that $r(v) \approx 0$ (region C), then $r(v)u_1 - (1-r(v))u_2 \approx -u_2$, which implies that cockroaches switch their mobility from fast to slow. Thus aggregation dominates over dispersal. However, with further increasing $v$, $r$ may be close to one again (region E), which we associate with overcrowding, and thus dispersal dominates again. Regions B and D are transition regions in which change from dispersal to aggregation and vice-versa happens and the extent of switching relates to the concentration of the aggregation pheromone.
%By $v^\ast, v^\# > 0$ we denote the values for which $r(v^\ast) = r(v^\#) = 1/2$.

%Surprisingly, these two simple assumptions translated into a reaction-diffusion system, c.f., Problem $(\mathcal{P})$, seem to be sufficient to explain clustering of cockroaches at small patches, see Figure~\ref{fig:rds}. The proposed mathematical model is explained in Section~\ref{sec:intro:model} in detail. \textcolor{red}{Sentence about singular limit, Figure~\ref{fig:cds}.}

\medskip
In the model, the aggregation pheromone plays a role of signalling molecules called autoinducers in bacterial quorum sensing \cite{Rutherford-2012}: cockroaches sense the concentration of pheromone $v$ and perceive the crowding level in the neighbourhood, which then affects their locomotive behavior. The aggregation pheromone is spread in space with a diffusion rate $D_v$, which is secreted by every cockroach with a production rate $\alpha$ and degraded with a rate $\beta$.
 
 \medskip
We can further add (logistic) growth terms and terms for competition for resources between $u_1$ and $u_2$ in the model on top of the dichotomy dynamics. The resulting model equations are as follows:
\begin{equation}\label{model2} \left\{ \begin{aligned}
\partial_t u_{1} & =  d \Delta u_1 + a_1 (1 - u_1 - u_2)u_1 - \frac{1}{\eps}\left( r(v)u_1 - (1-r(v))u_2\right), \\
\partial_t u_{2} & = (d+D) \Delta u_2 + a_2 (1 - u_1 - u_2)u_2 + \frac{1}{\eps}\left( r(v)u_1 - (1-r(v))u_2\right), \\
\partial_t v & =  D_v \Delta v + \alpha(u_1+u_2) - \beta v, \\
\end{aligned} \right. \end{equation}
where $a_1$ and $a_2$ are the intrinsic growth rates of $u_1$ and $u_2$, respectively.
If $a_1=a_2=0$, the model \eqref{model2} reduces to the simple dichotomy dynamics model \eqref{model1}. In general, the growth rates $a_1$ and $a_2$ are two positive constants. We may, however, assume that $a_1\geq a_2$ since the growth rate of slow-motioned population is higher than that of the fast-motioned one because the environment where the cockroaches aggregate is more suitable for reproduction. 

\medskip
The switching between $u_1$ and $u_2$ in \eqref{model1} and \eqref{model2} is fully given by the difference of the terms  $r(v)u_1$ and $(1-r(v))u_2$. A more general case would be to consider two positive functions $p = p(v)$ and $q = q(v)$ and a mathematical model  
\begin{equation} \label{model3}
\left\{ 
\begin{aligned}
\partial_t u_{1} & =  d \Delta u_1 + a_1 (1 - u_1 - u_2)u_1 - \frac{1}{\varepsilon}\left( q(v)u_1 - p(v)u_2\right), \\
\partial_t u_{2} & = (d+D) \Delta u_2  + a_2 (1 - u_1 - u_2)u_2 + \frac{1}{\varepsilon}\left( q(v)u_1 - p(v)u_2\right), \\
\partial_t v & = D_v \Delta v + \alpha(u_1+u_2) - \beta v, \\
\end{aligned} 
\right. 
\end{equation}
where $q$ and $p$ satisfy the following modelling assumptions:
\begin{itemize}
\item If $v$ is small, $q(v)$ is close to a positive constant $r_1$ and $p(v)$ is close to zero. 
\item If $v$ is moderate, $q(v)$ is close to zero and $p(v)$ is close to a positive constant $r_2$. 
\item If $v$ is large, $q(v)$ is close to a positive constant $r_3$ and $p(v)$ is close to zero. 
\end{itemize}
The constants $r_1$, $r_2$ and $r_3$ do not necessarily have the same value. For the purpose of mathematical analysis we require from $p$ and $q$ to satisfy some smoothness and growth conditions specified later.

\medskip
The rate of conversion between $u_1$ and $u_2$ is denoted by $1/\varepsilon$, which is large whenever $\eps$ small. Since the time scale of the conversion of the states is in general much faster than the other time scales such as the dispersal and the growth of the populations, we may assume that $1/\eps$ is large (and thus $\eps$ small) compared with the diffusion rates $d$ and $d+D$ and the growth rates $a_1$ and $a_2$.
When we formally take the limit $\varepsilon\to 0$, we obtain that $q(v)u_1-p(v)u_2=0$. Here, noting that $u_1$ and $u_2$ are subpopulations, and denoting the total population as $u:=u_1+u_2$, we have $u_1=\frac{p(v)}{q(v)+p(v)}u$ and $u_2=\frac{q(v)}{q(v)+p(v)}u$. By adding the equations for $u_1$ and $u_2$, we obtain (formally) the following reduced model in the limit $\eps \to 0$: 
\begin{equation} \label{model4}
\left\{ 
\begin{aligned}
\partial_t u & =  \Delta \left( \left( d + D \frac{q(v)}{p(v)+q(v)} \right) u\right) + \frac{a_1 p(v) + a_2 q(v)}{p(v) + q(v)} (1 - u)u, \\
\partial_t v & =  D_v \Delta v + \alpha u - \beta v. 
\end{aligned} 
\right. 
\end{equation}
The feature of the reduced model is to naturally obtain a chemotactic movement. 
The nonlinear diffusion in the equation for $u$ indicates 
\[
\begin{aligned}
\partial_t u  &=  \Delta \left( \left( d + D \frac{q(v)}{p(v)+q(v)} \right) u\right) \\
&=\nabla \cdot \left( \left( d + D \frac{q(v)}{p(v)+q(v)} \right) \nabla u\right) + D \nabla\cdot \left( u \nabla \left(  \frac{q(v)}{p(v)+q(v)} \right) \right). 
\end{aligned}
\]
The first term means a density dependent diffusion and the second term describes a chemotactic effect which implies that the total population $u$ senses the chemical gradient $v$ on a macroscopic level. 
Hence, another aspect of the dichotomy dynamics is to approximate a two-component density dependent diffusion system \eqref{model4}, the so-called cross-diffusion system which naturally possesses a chemotactic movement, by a three-component reaction-diffusion system \eqref{model3}. The first result of this kind dates back to 2006 in the seminal work of Iida, Mimura and Ninomiya~\cite{iida-2006}.

%In this paper we study the system of equations \eqref{model3} which models social behaviour of cockroaches with dispersal due to crowding superimposed on the aggregation phenomenon. An extensive numerical analysis of the model is presented in Section~\ref{numer} and a singular limit of the system \eqref{model3} for $\eps \to 0$ is solved in Section~\ref{analy}. Before we start with any analysis, we briefly summarise the major contribution of this paper, including challenges and methods, in the following subsection~\ref{contr}.

\subsection{Contribution of our work}\label{contr}

In order to study aggregation of cockroaches, Funaki and coauthors \cite{Funaki-2012} propose both microscopic and macroscopic cross-diffusion model for the self-organised aggregation of cockroaches that includes directed movement due to an aggregation pheromone. A link between the micro- and macroscopic models is established by a reaction-diffusion system \eqref{model1} which is shown to approximate the macroscopic model, formally and rigorously, as a singular limit of problem~\eqref{model1} when $\eps \to 0$. In \cite{Funaki-2012} the model~\eqref{model1} with a strictly decreasing function $r$ is, however, considered and \textit{thus the effect of crowding is not taken into account}.
The model~\eqref{model2} with $a = a_1 = a_2 > 0$ and decreasing $r$ is further studied in \cite{Ei-2012}, in particular, the authors consider a reaction limit of problem \eqref{model2} for $a \to 0$ and every $\eps > 0$. 
Unlike in the papers \cite{Funaki-2012, Ei-2012}, the model equations \eqref{model3} proposed in this paper are more general and consider not only aggregation of cockroaches, which is \textit{only one side} of a complex and so far not fully understood behaviour of cockroaches, but also dispersal due to crowding by adapting a more realistic switching function such as the one depicted in Figure~\ref{fig:01}. 

\begin{figure}[tb!]
\begin{tabular}{p{0.46\textwidth} p{0.0\textwidth} p{0.46\textwidth}}
\includegraphics[width=0.46\textwidth]{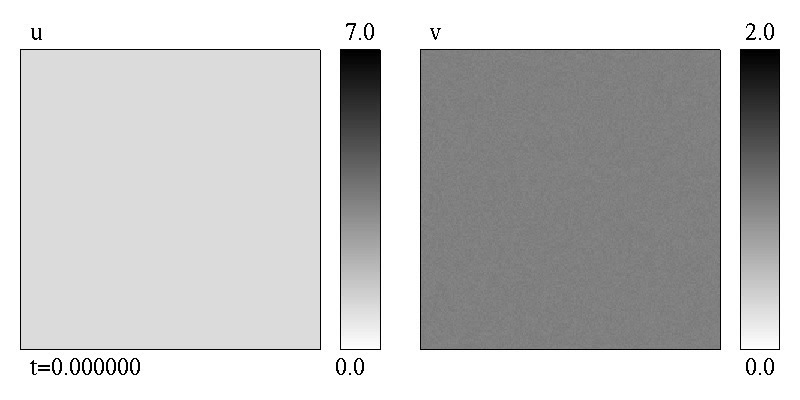} & & \includegraphics[width=0.46\textwidth]{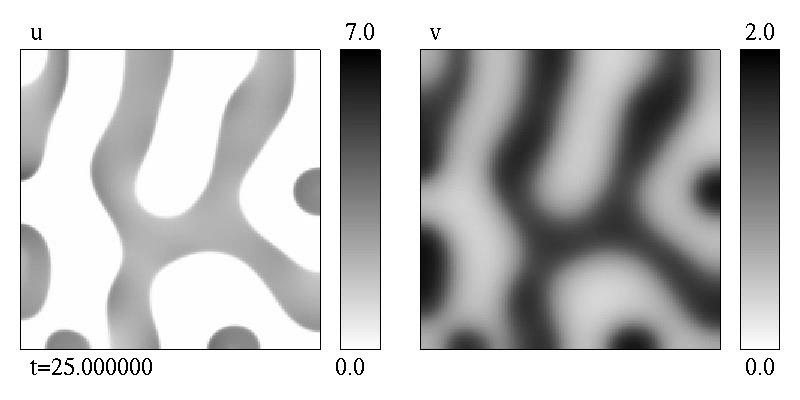}\\[-0.8em]
\includegraphics[width=0.46\textwidth]{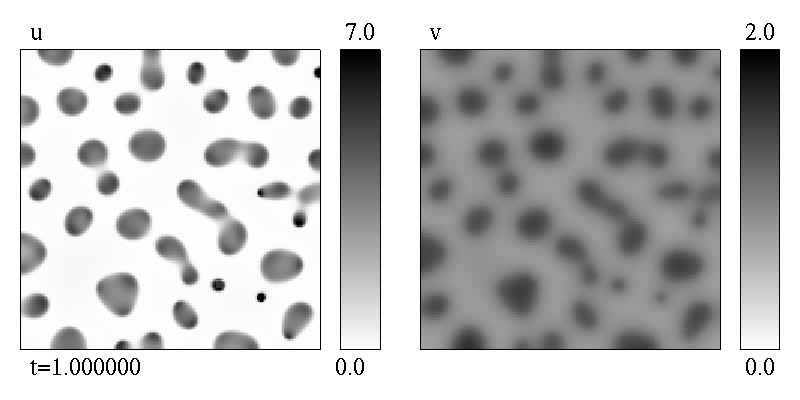} & & \includegraphics[width=0.46\textwidth]{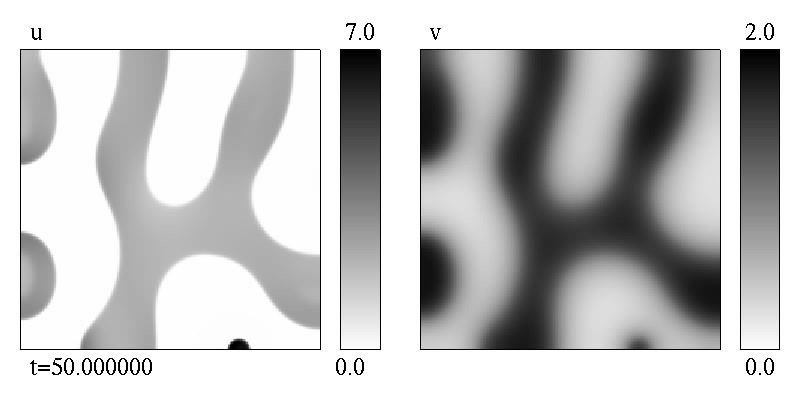}\\[-0.8em]
\includegraphics[width=0.46\textwidth]{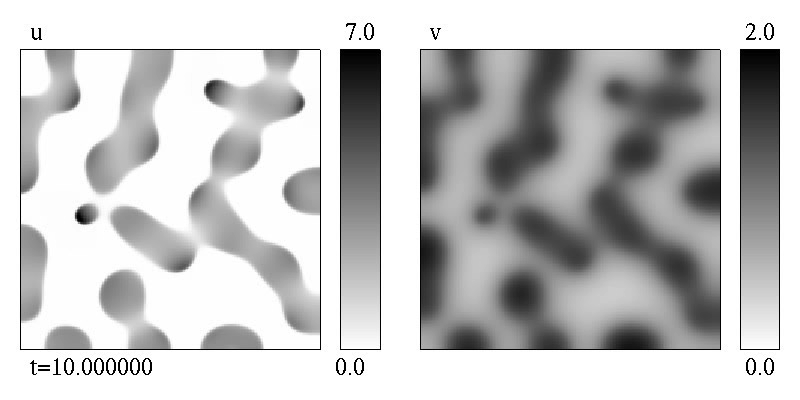} & & \includegraphics[width=0.46\textwidth]{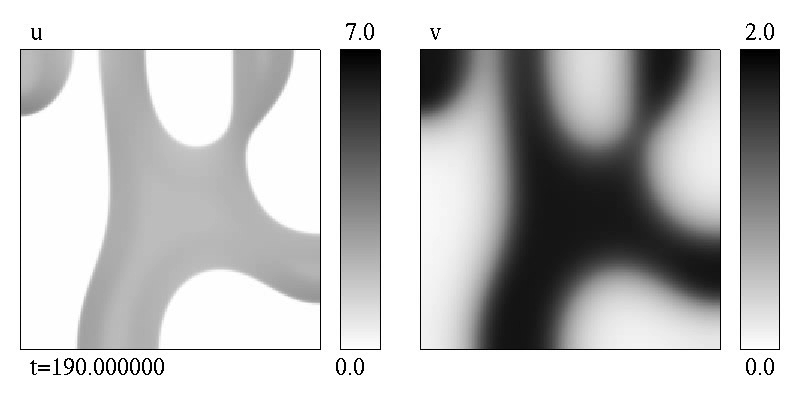}
\end{tabular} 
\caption{Aggregation of cockroaches for $v^\ast = 1$ and $v^\# = 2$: snapshots of spatial distribution of cockroaches and pheromone at different times. The total initial density of $u$ as well as the concentration of the pheromone $v$ is set to one. Moreover, additional noise is added to $v$.}
\label{fig:02}
\end{figure}

\medskip
With the probability function $r$ as in Figure~\ref{fig:01}, $q=r$ and $p = 1-r$, we see that the role of aggregation and dispersal in the model \eqref{model3} is shaped by two pheromone concentration values $v^\ast$ and $v^\#$ at which $r(v^\ast) = r(v^\#) = 1/2$ and the steepness of $r$ in the transition regions B and D. Let us consider $\gamma_1 = \gamma_2 = 20$, c.f. parameters in \eqref{param}. We remark that $r$ is rather steep in the transition regions B and D for chosen $\gamma_1$ and $\gamma_2$, hence the regions B and D are short intervals and we observe either dominating dispersal (regions A and E) or aggregation (region C) for most values of $v$. 
Different behaviour of the total population of cockroaches $u = u_1 + u_2$  can be observed depending on the distance between $v^\ast$ and $v^\#$. Figure~\ref{fig:02} shows spatio-temporal behaviour of cockroach density $u$ and the pheromone concentration $v$ from homogeneous initial states with some noise added to the initial pheromone concentration. In this simulation we set $v^\ast = 1$ and $v^\# = 2$ where $v^\# = 2$ is the maximum concentration of pheromone produced by cockroaches over time and space. For such high $v^\#$ the effect of dispersal due to crowding is minimal simply because $v$ does not get over $v^\#$. In this case cockroaches form large irregular aggregates
%. As we can see later (cf., \bao{[please insert the corresponding reference here]} in the numerical simulation part), % jan: to precise
%the pattern formed by cockroaches is stable non-constant steady state of \eqref{model3}. This result is 
similar to the pattern obtained in \cite{Funaki-2012} where aggregation is only considered. 

\begin{figure}[tb!]
\begin{tabular}{p{0.46\textwidth} p{0.0\textwidth} p{0.46\textwidth}}
\includegraphics[width=0.46\textwidth]{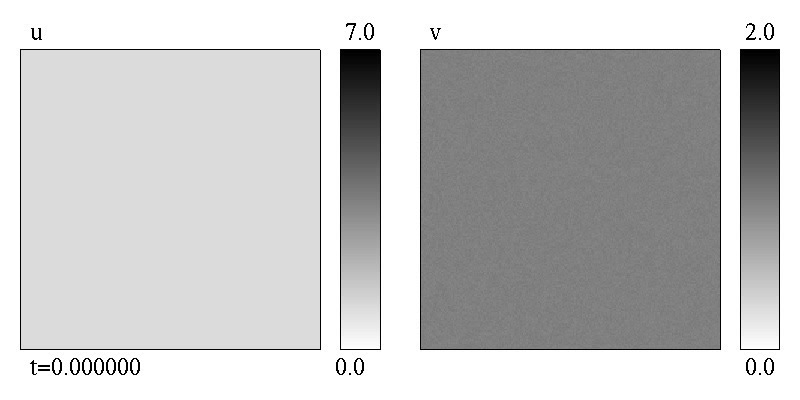} & & \includegraphics[width=0.46\textwidth]{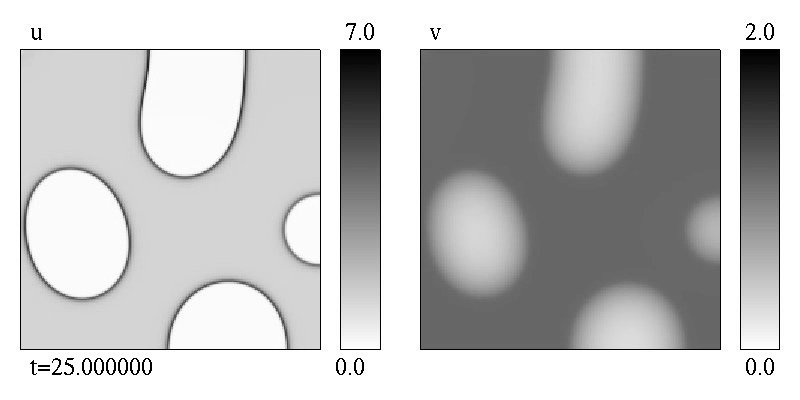}\\[-0.8em]
\includegraphics[width=0.46\textwidth]{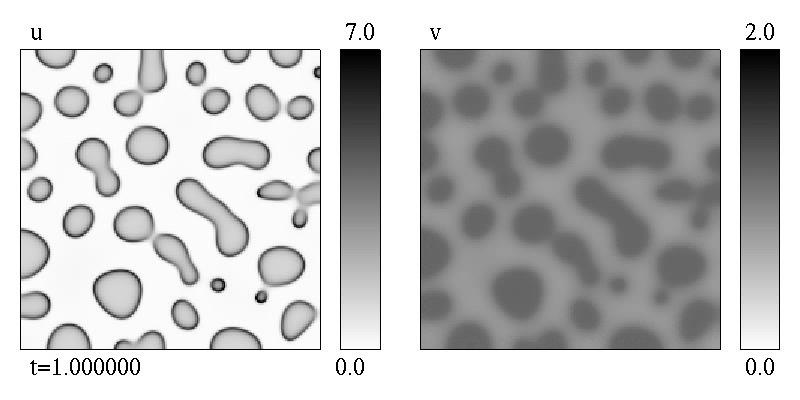} & & \includegraphics[width=0.46\textwidth]{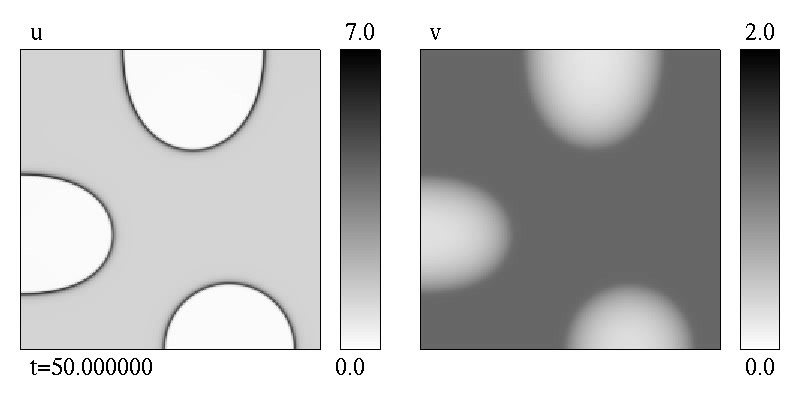}\\[-0.8em]
\includegraphics[width=0.46\textwidth]{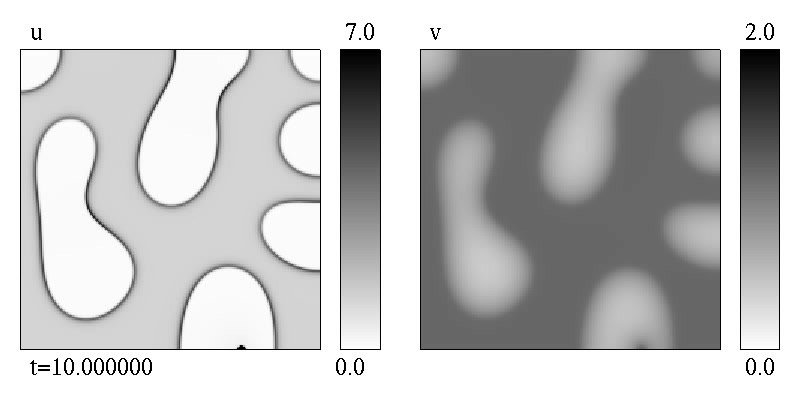} & & \includegraphics[width=0.46\textwidth]{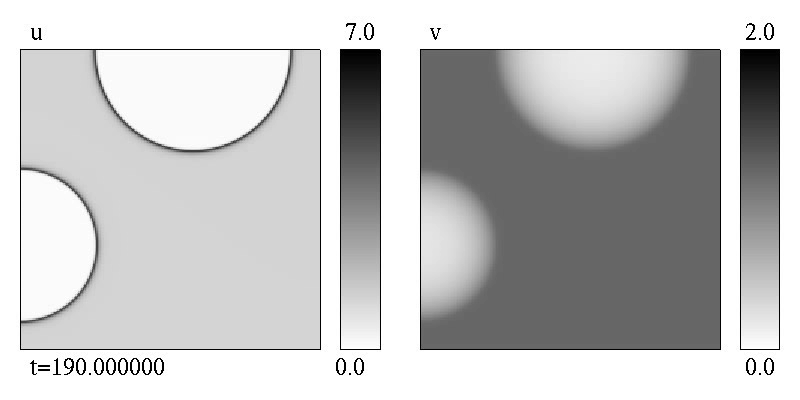}
\end{tabular} 
\caption{Aggregation and dispersal of cockroaches for $v^\ast = 1$ and $v^\# = 1.25$: snapshots of spatial distribution of cockroaches and pheromone at different times. The total initial density of $u$ as well as the concentration of the pheromone $v$ is set to one. Moreover, additional noise is added to $v$.}
\label{fig:03}
\end{figure} 

\medskip
However, if we set $v^\# = 1.25$, then the dispersal of cockroaches due to crowding kicks in. In this case cockroaches aggregate on the boundaries of rather uniformly shaped sites, later having form of disks, which divide the spatial domain into two regions, see Figure~\ref{fig:03}. The pheromone concentration is high outside the disks, therefore most cockroaches prefer these areas. Moreover, a low concentration of the aggregation pheromone inside the disks signalises a possible unfavourable environment, therefore cockroaches inside the disks move towards the places with higher concentration of the pheromone, hence they want to escape from the disks. Crowding outside disks, however, forces cockroaches to search for yet other places. %which are not the unfavourable areas inside the disks though. 
Therefore, cockroaches get trapped to the narrow stripes which separate areas with high pheromone concentration from the areas in which the pheromone concentration is small.
 % jan: I am not sure at all about all this. I can't even explain expanding/shrinking of some disks...
%\bao{This behaviour is certainly new and apparently has not been observed before.} \textcolor{red}{Possibly, for previous experiments, there were always unlimited spatial area for aggregation, so dispersal due to crowding effect has never taken place.} \bao{Our model predicts that in the case of limit space, aggregation will happen more intensive at the ``transition zone'' rather than at the high zone of pheromone concentration.}
Our newly discovered behaviour is intriguing and seems not to have been observed for cockroaches but apparently appeared in other similar biological models, such as chemotaxis models for E.coli or other bacteria (see e.g. \cite{aotani2010model}).

\medskip
Concerning the limit as $\eps\to 0$, our work is closely related to fast reaction limits towards a cross-diffusion-reaction system, which have been developed extensively in the last decade for many models in biology, chemistry, archeology, etc. Among those, let us mention the works \cite{eliavs2021singular,bothe2012cross,daus2020cross,desvillettes2015new,brocchieri2020evolution}. At first glance, our system \eqref{eq:01} looks similar to what have been considered in \cite{desvillettes2015new} or \cite{brocchieri2020evolution}. However, what differs our work from these two, and other related works in the literature, is the equation concerning the pheromone concentration $\veps$. More precisely, the equations of $\veps$ in \cite{desvillettes2015new,brocchieri2020evolution} possess a structure that implies the boundedness of $\veps$ in $L^{\infty}(\Omega\times(0,T))$ immediately by using the comparison principle. This in turn becomes an important ingredient to obtain other a-priori estimates, which ultimately allow to pass to the limit rigorously. This is not the case for our system at hand. Indeed, the regularity or the order of integrability of $\veps$ very much depends on that of $\uoneeps$ and $\utwoeps$. In fact, without further conditions on the functions $p$ and $q$, besides their smoothness, even the global existence of weak solutions to the system \eqref{eq:01} for fixed $\eps >0$ is unclear. In this paper, we resolve this issue by first using an improved duality method to obtain that $\uoneeps, \utwoeps$ belong to $L^{2+\delta_0}(\Omega\times(0,T))$ for some $\delta_0>0$. In dimensions one and two, utilising these estimates, the boundedness of $\veps$ follows from the regularisation of the heat operator. In higher dimensions $N\ge 3$, additional assumptions on either the boundedness of functions $p$ and $q$ or closeness the diffusion coefficients $d$ and $D$ are imposed. In case of the latter, we first use the duality method to obtain bounds of $\uoneeps$ and $\utwoeps$ in $L^r(\Omega\times(0,T))$ for some $r>(N+1)/2$, which, in combination with the regularisation of the heat operator, allow us to again obtain the boundedness of $\veps$ in $L^{\infty}(\Omega\times(0,T))$. It is worth mentioning that, while the structure of the equation for $\veps$ prevents us from obtaining the $L^{\infty}$-boundedness of $\veps$ in all dimensions (without further assumptions), it on other hand helps us to show that $\veps$ is bounded below away from zero (for any finite time), which in turn allows us to neglect the lower bounds assumption of $p$ and $q$ that were assumed in \cite[Assumption (12)]{desvillettes2015new}.

\medskip
Insufficient regularity for $\veps$, as well as for $\uoneeps$ and $\utwoeps$, is the main reason why the fast reaction limit of a system of equations for $\eps \to 0$ studied in \cite{Ei-2012} remained an open problem. However, our results apply to that model since it is a special case of \eqref{model3}. Therefore our paper completes and expands the works of Mimura and coauthors \cite{Ei-2012, Funaki-2012} in their modelling effort to reproduce social behaviour of cockroaches.

\medskip
\noindent\textbf{The rest of this paper is organised as follow}: In the next section, we discuss the numerical simulation of system \eqref{model3}. More precisely, bifurcation diagrams in both cases of without and with logistic growth are presented. We also give bifurcation diagram concerning the fast reaction limit $\eps\to 0$. The rigorous analysis of this fast reaction limit is investigated in Section \ref{analy}. Finally, the Appendix is devoted to two technical results.

%\textcolor{red}{Something about singular limits: why, what for? Maybe some numerics can be added here, no special section necessary.}
%Special kind of similar problems is when d=0, which brings another way of complexity.

%As it can be seen from Figures \ref{fig:rds} and \ref{fig:cds}, the reaction diffusion system $(\mathcal{P})$ approximates a cross-diffusion system. From analytical point of view we aim to prove rigorously the limit passage when $\eps$ converges to zero. Further mathematical formalism and main results are summarised in the following section \ref{sec:analysis}.

\section{Numerical simulation of the model} \label{numer}

In this section, we numerically discuss the following model: 
\[
\left\{
\begin{aligned}
\partial_t u_1&=d\Delta u_1 + a_1(1-u_1-u_2)u_1-\frac 1 \varepsilon (q(v)u_1-p(v)u_2),\\
\partial_t u_2&=(d+D)\Delta u_2 + a_2(1-u_1-u_2)u_2+\frac 1 \varepsilon (q(v)u_1-p(v)u_2),\\
\partial_t v &= D_v \Delta v +\alpha (u_1+u_2)-\beta v.  
\end{aligned}
\right.
\]
In particular, we investigate the global structures of stationary solutions from the viewpoint of pattern formation by using a numerical bifurcation software AUTO (\cite{DOCDFKPSWZ}). 
Since the global structures of stationary solutions of the system with growth term ($a_1, a_2>0$) and of the model without growth term ($a_1=a_2=0$) somewhat differ, the two cases are discussed separately.  

Through this section, the functions of $q(v)$ and $p(v)$ are set as
\begin{equation}
\label{q and p}
\begin{aligned}
q(v)&=\frac{1}{2} \left( 1-\tanh(\gamma_1(v-v^*))\right)+\frac{1}{2} \left( 1+\tanh(\gamma_2(v-v^\#))\right)\\
& \text{  and} \\
p(v)&=1-q(v). 
\end{aligned}
\end{equation}
When the function $q(v)$ is divided into two part such as $q(v)=q_1(v)+q_2(v)$, where
$q_1(v)=\frac{1}{2} \left( 1-\tanh(\gamma_1(v-v^*))\right)$ and $q_2(v)=\frac{1}{2} \left( 1+\tanh(\gamma_2(v-v^\#))\right)$, 
note that the function $q_1(v)$ contributes to the activeness for low chemicals, on the other hand the function $q_2(v)$ contributes to the activeness for high chemicals. 
In addition, we use the following parameter setting unless otherwise stated: 
\begin{equation}
\label{param}
d=0.05,\, D_v=0.1,\, \alpha=1.0,\, \beta=1.0,\, \gamma_1=20.0,\, \gamma_2=20.0,\, v^*=1.0. 
\end{equation}

\subsection{System without growth term}
The case $a_1=a_2=0$ is studied in this subsection. 
Then, the system in one space dimension is given by 
\begin{equation}
\label{3RD1}
\left\{
\begin{aligned}
\partial_t u_1&=d \partial^2_x u_{1} -\frac 1 \varepsilon (q(v)u_1-p(v)u_2),\\
\partial_t u_2&=(d+D) \partial^2_x u_{2} +\frac 1 \varepsilon (q(v)u_1-p(v)u_2),\\
\partial_t v &= D_v  \partial^2_x v +\alpha (u_1+u_2)-\beta v,   
\end{aligned}
\quad t>0,\, 0<x<L. 
\right.
\end{equation}
We consider the above model under the Neumann boundary conditions
\[
\partial_x u_1=\partial_x u_2=\partial_x v=0 \quad \text{on } x=0,\, L
\]
and the initial conditions
\[
u_1(0,x)=u_{10}(x),\, u_2(0,x)=u_{20}(x), \, v(0,x)=v_0(x),\quad 0<x<L. 
\]
Adding the equations for $u_1$ and for $u_2$, we have 
\(
\partial_t (u_1+u_2)=\partial_x^2(du_1+(d+D)u_2). 
\)
Integrating the equation over $(0,L)$, and using integration by parts and the Neumann boundary conditions, 
\(  
\partial_t \int_0^L (u_1+u_2)dx = 0
\)
is obtained. 
This means that \eqref{3RD1} is a mass conservation system. 
When we define the spatial average of initial mass $u_{10}+u_{20}$ as $M:=\frac{1}{L}\int_0^L (u_{10}+u_{20})dx$, the constant steady state is expressed by 
$$
(\overline{u}_1,\overline{u}_2,\overline{v})=\left(\frac{p(\frac{\alpha}{\beta}M)}{p(\frac{\alpha}{\beta}M)+q(\frac{\alpha}{\beta}M)}M,\frac{q(\frac{\alpha}{\beta}M)}{p(\frac{\alpha}{\beta}M)+q(\frac{\alpha}{\beta}M)}M,  \frac{\alpha}{\beta}M \right), 
$$ 
which is parametrized by $M$. 
This constant steady state can be destabilized depending on the value of $M$ via Turing-like instability. 
In order to explain this, \eqref{3RD1} is linearized around the constant steady state. Then, the linearized matrix for $n$-Fourier cosine mode is obtained as follows: 
\[
A_n=
\begin{pmatrix}
-d\left(\frac{n\pi}{L}\right)^2-\frac 1 \varepsilon q(\overline{v}) & \frac 1 \varepsilon p(\overline{v}) & -\frac1 \varepsilon (q'(\overline{v})\overline{u}_1-p'(\overline{v})\overline{u}_2)\\
\frac1 \varepsilon q(\overline{v}) & -(d+D)\left(\frac{n\pi}{L}\right)^2-\frac 1 \varepsilon p(\overline{v}) & \frac 1 \varepsilon (q'(\overline{v})\overline{u}_1-p'(\overline{v})\overline{u}_2)\\
\alpha & \alpha & -D_v\left(\frac{n\pi}{L}\right)^2-\beta
\end{pmatrix}
\]
One can see from the linearized matrices and linear stability analysis that the neutral stability curve for $n$ ($n=1, 2, 3, \cdots$) is given by $\text{det}A_n=0$. For \eqref{q and p}, the stable and unstable regions of the constant steady state $(\overline{u}_1,\overline{u}_2,\overline{v})$, which depends on $M$, and the neutral stability curves in $(M,D)$-plane are shown in Figure \ref{NSC-3RD}. 
\begin{figure}[htbp]
\begin{center}
\includegraphics[width=80mm]{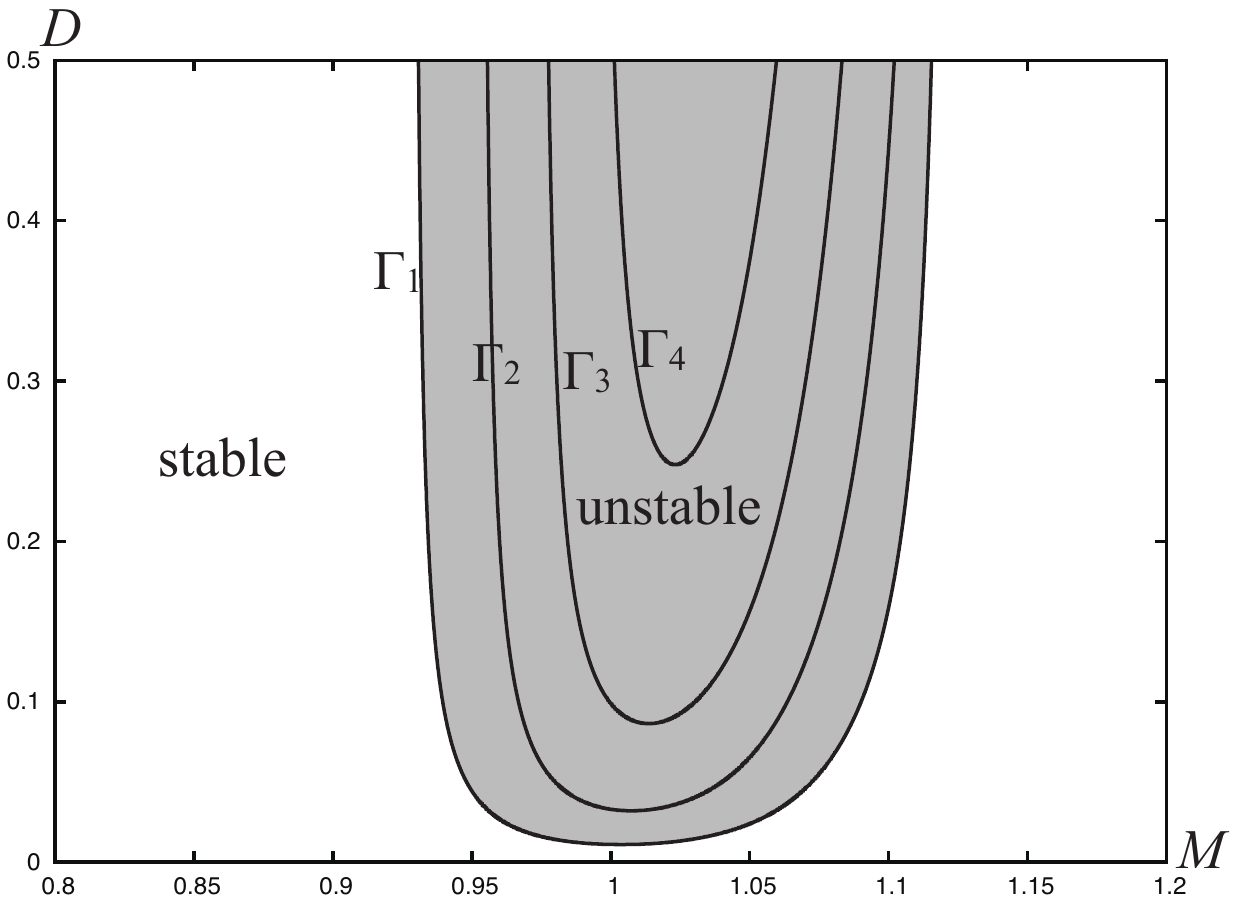}
\caption{The stable and unstable regions of the constant steady state $(\overline{u}_1,\overline{u}_2,\overline{v})$ and the neutral stability curves which defined by $\Gamma_n=\{(M,D)\in \mathbb{R}_+^2 : \text{det}A_n=0\}$ for each $n$. 
The parameter values are $v^\#=1.25$, $\varepsilon=0.001$ and $L=1.0$.}
\label{NSC-3RD}
\end{center}
\end{figure}
Assume that a suitable $D$ value is fixed. 
One can see from the figure that when the value of $M$ is large or small, the constant steady state is stable, on the other hand, when the value of $M$ is moderate, it is destabilized, so that we can expect that non-constant stationary solutions emerge in the range. 
Based on the linear stability analysis, we numerically compute global structures of stationary solutions with the aid of a numerical bifurcation software AUTO. 
Figure \ref{exa_fig1} shows the global structure of stationary solution when $D=0.15$ and $M$ is a bifurcation parameter. 
\begin{figure}[htbp]
\begin{center}
\includegraphics[width=100mm]{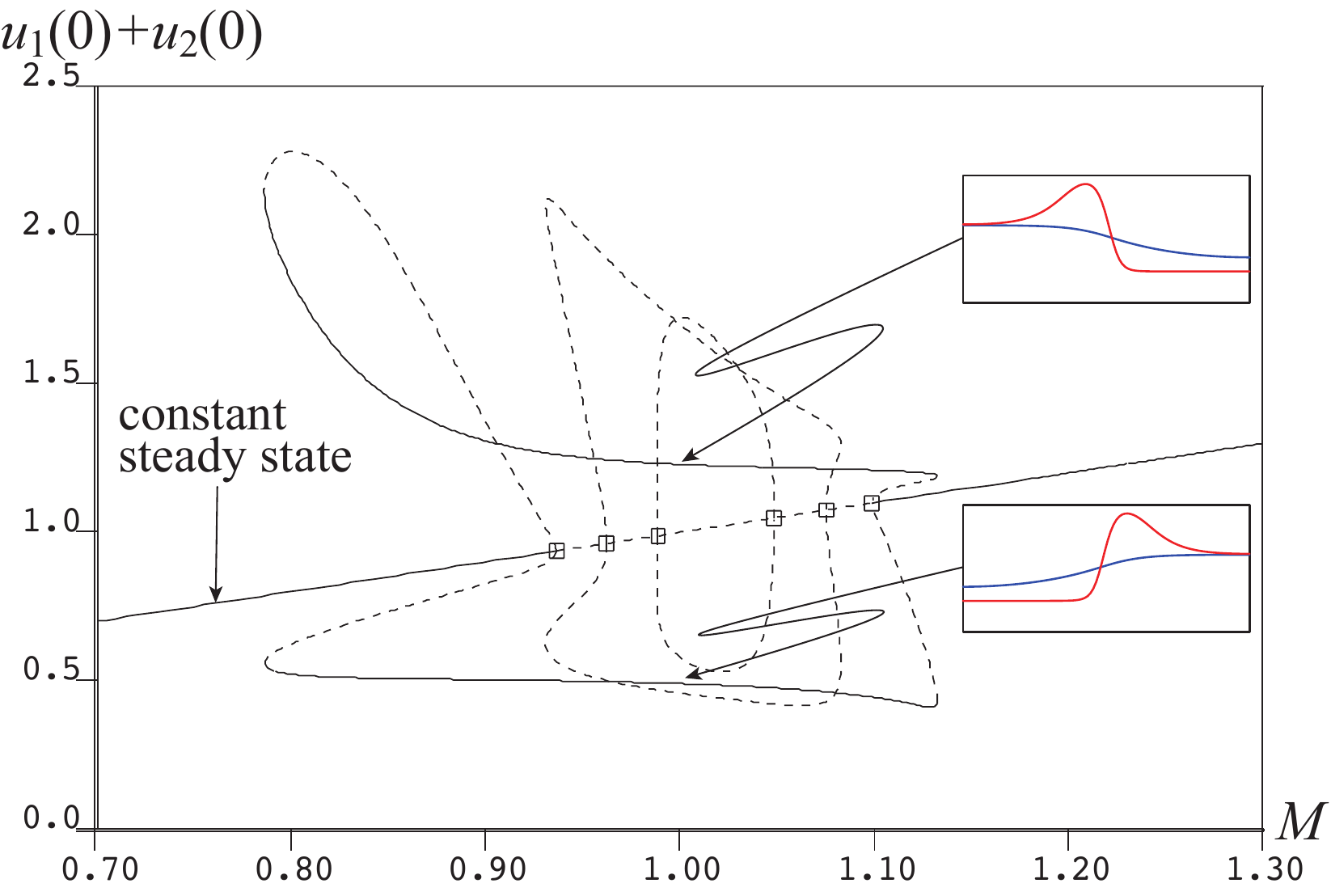}
\caption{Global structure of stationary solutions for \eqref{3RD1}. The horizontal and the vertical axes denote a bifurcation parameter $M$ and the value of $u_1+u_2$ at $x=0$, respectively. The solid and dashed curves respectively means stable solution branch and unstable one. The square $\square$ represents a pitchfork bifurcation point. Two profiles of stationary solution are displayed in the figure, where the red and blue curves means $u_1+u_2$ and $v$, respectively. Each profile corresponds to the stationary solution with $M=1$ on the stable branches. }
\label{exa_fig1}
\end{center}
\end{figure}
As the value of $M$ increases, the constant steady state loses its stability around $M=0.936341$ and a subcritical pitchfork bifurcation occurs. Although a pair of non-constant stationary solution branches unstably emerges from the pitchfork bifurcation point, it recovers the stability via fold bifurcations around $M=0.78621$ then stably non-constant stationary solutions are observed for $0.78621<M<1.13259$. 
These stable solution branches undergo fold bifurcations again around $M=1.13259$ and the unstable branches connect with the pitchfork bifurcation point around $M=1.09886$. 
For large values of $M$, only the constant solution is stable. 
Here, we note that the upper solution branch and the lower one can be identified because if $(u_1(x),u_2(x),v(x))$ is a stationary solution on the upper branch, $(u_1(L-x),u_2(L-x),v(L-x))$ corresponds to the lower one, so that the stability of the two solutions is also same(see also the solution profiles in Figure \ref{exa_fig1}). 
For $0.936341<M<1.09886$, non-constant stationary solution branches with 2-Fourier mode and with 3-Fourier mode are also observed. However, these are always unstable. 
The transition of stably non-constant stationary solution profiles is shown in Figure \ref{transition} when the value $M$ varies. 
\begin{figure}[htbp]
\begin{center}
\begin{tabular}{cc}
\includegraphics[width=55mm]{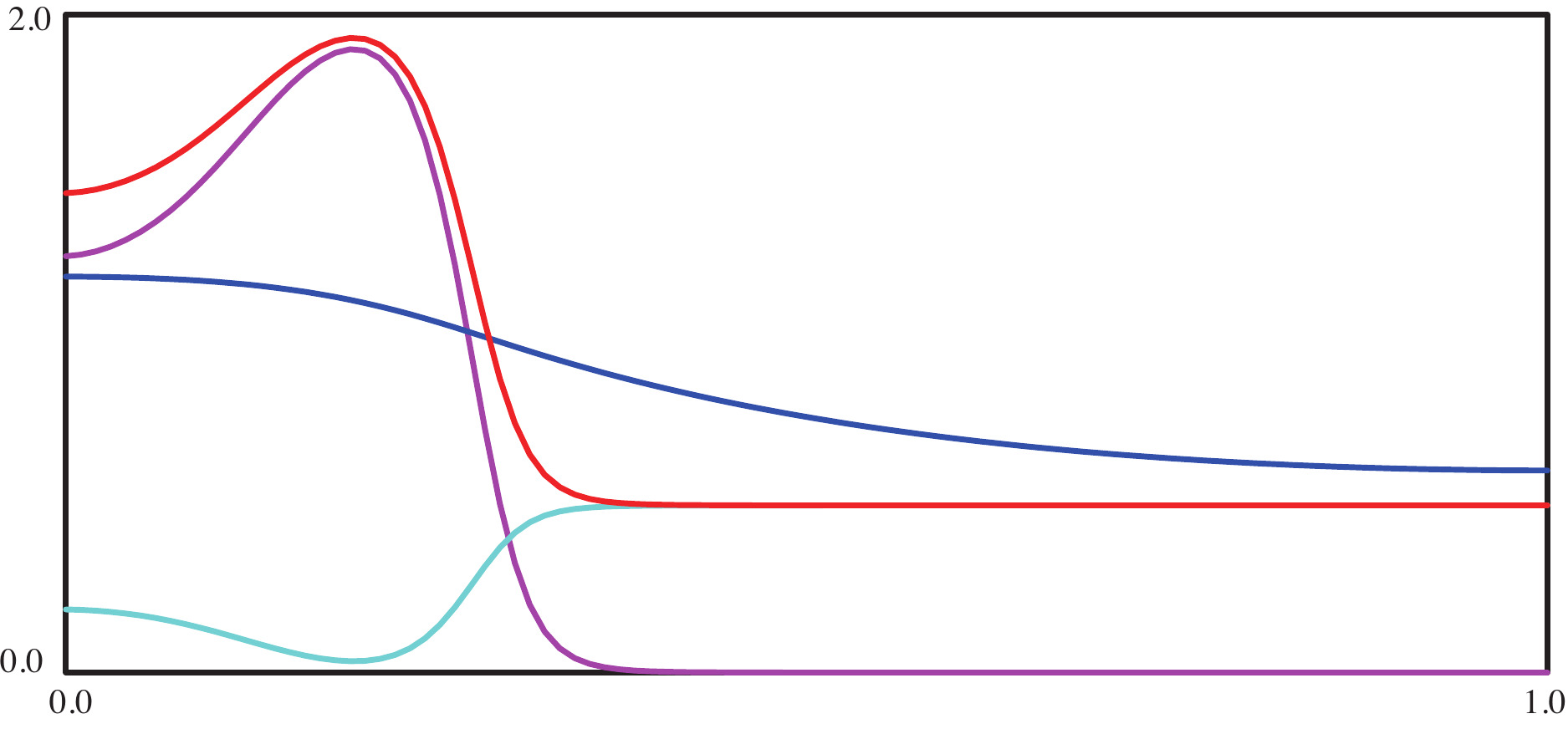}&
\includegraphics[width=55mm]{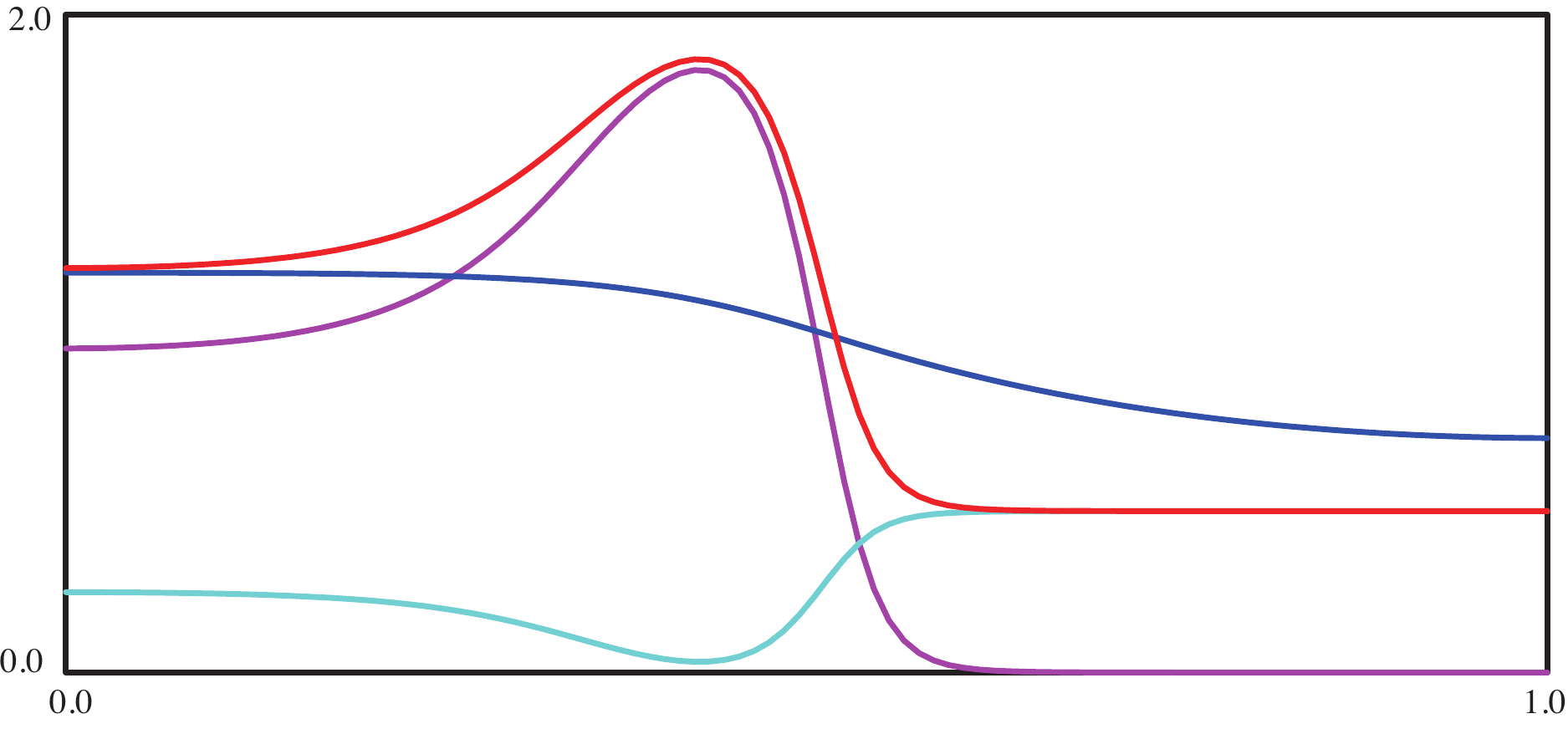}\\
$M=0.85$ & $M=1.0$\\
\includegraphics[width=55mm]{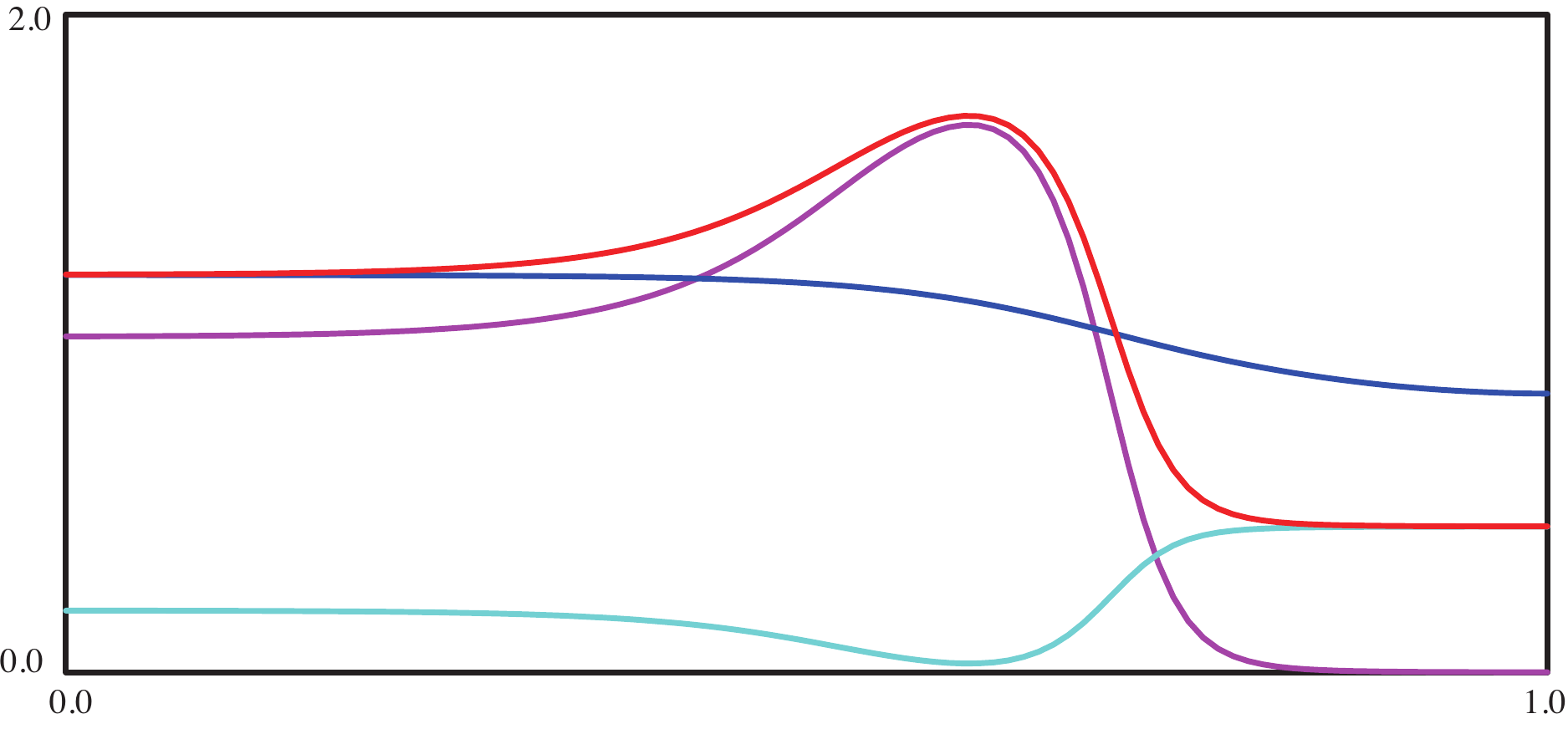} & \\
$M=1.1$ & 
\end{tabular}
\caption{Stably stationary solution profiles according to the value $M$. The meaning of colors is as follows: (red) $u_1+u_2$, (blue) $v$, (magenta) $u_1$ and (cyan) $u_2$. The parameter values are the same as in Figure \ref{exa_fig1}. }
\label{transition}
\end{center}
\end{figure}
One can see that the aggregation region expands as the average mass $M$ increases. 
Interestingly, the peak of the total population density $u_1+u_2$ is not the point where the pheromone concentration $v$ is a maximum. 
One can understand this phenomenon from the fact that each individual prefers to the moderate pheromone concentration because they move actively if it is low or high.  

In this section, the function $q(v)$ is set as \eqref{q and p}.
If the value of $v^\#$ is large enough in this expression, the function $q(v)$ is regarded as $q(v)=q_1(v)=\frac{1}{2} \left( 1-\tanh(\gamma_1(v-v^*))\right)$. This case is discussed in \cite{Funaki-2012}. A new mathematical modeling perspective in this paper is that if the population density is too high, each individual escapes from the overcrowded region. Here, we assume that each individual senses the population density through the pheromone concentration. Therefore, we add the latter term $q_2(v)=\frac{1}{2} \left( 1+\tanh(\gamma_2(v-v^\#))\right)$ to avoid an overcrowded situation which seems unrealistic as a biological situation, and the parameter $v^\#$ represents a kind of threshold such that each individual recognizes whether or not the current position is overcrowded. 
That is, if the value $v^\#$ is small, each individual is sensitive to the high population density and then tends to move to the moderate density region. We next investigate the influence of $v^\#$ on the pattern formation. 
Figure \ref{fig_ab} shows the transition of global structures of stationary solutions when the value of $v^\#$ is varied. 
\begin{figure}[htbp]
\begin{center}
\begin{tabular}{cc}
\includegraphics[width=65mm]{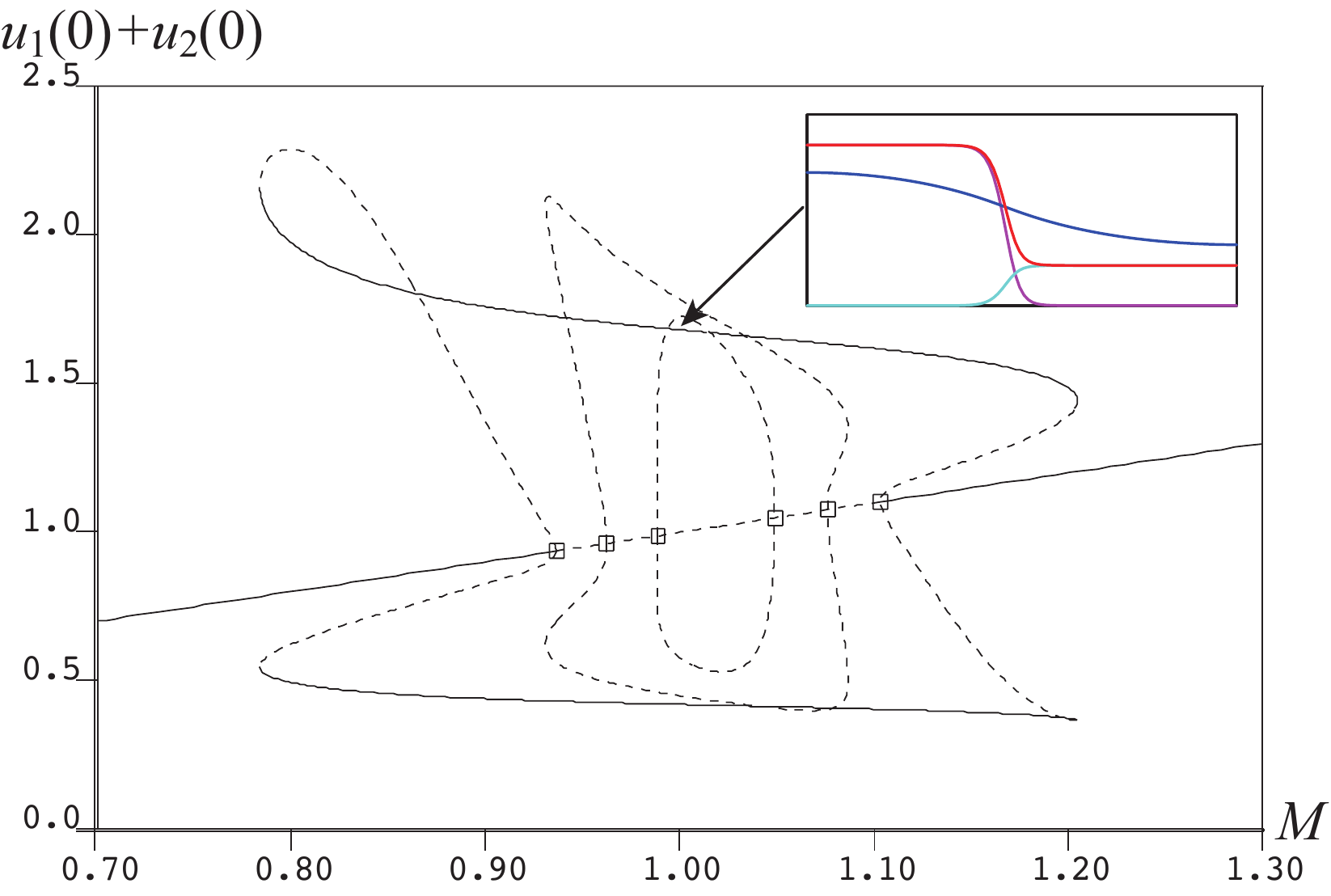} &
\includegraphics[width=65mm]{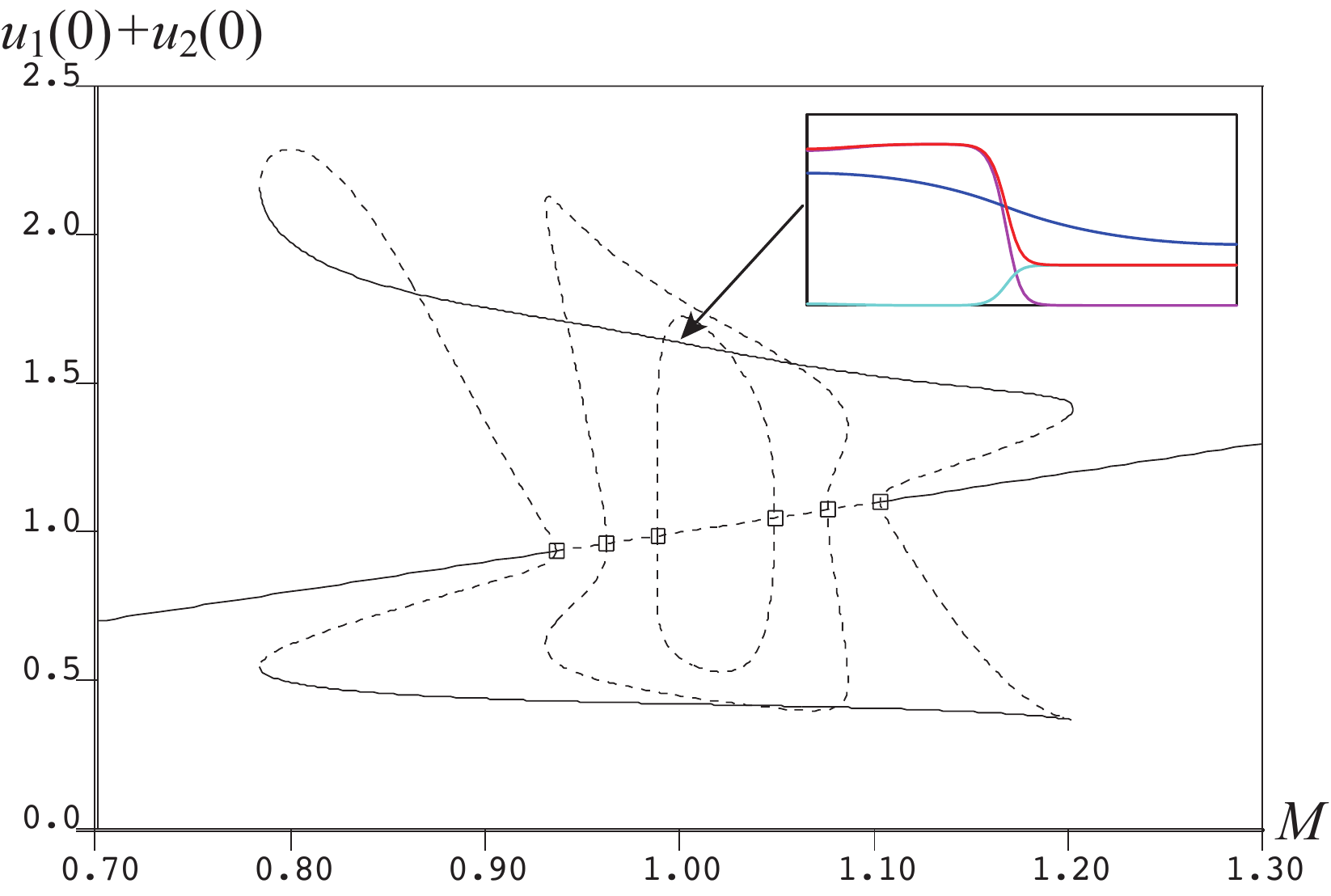}\\
(a) & (b)\\
\includegraphics[width=65mm]{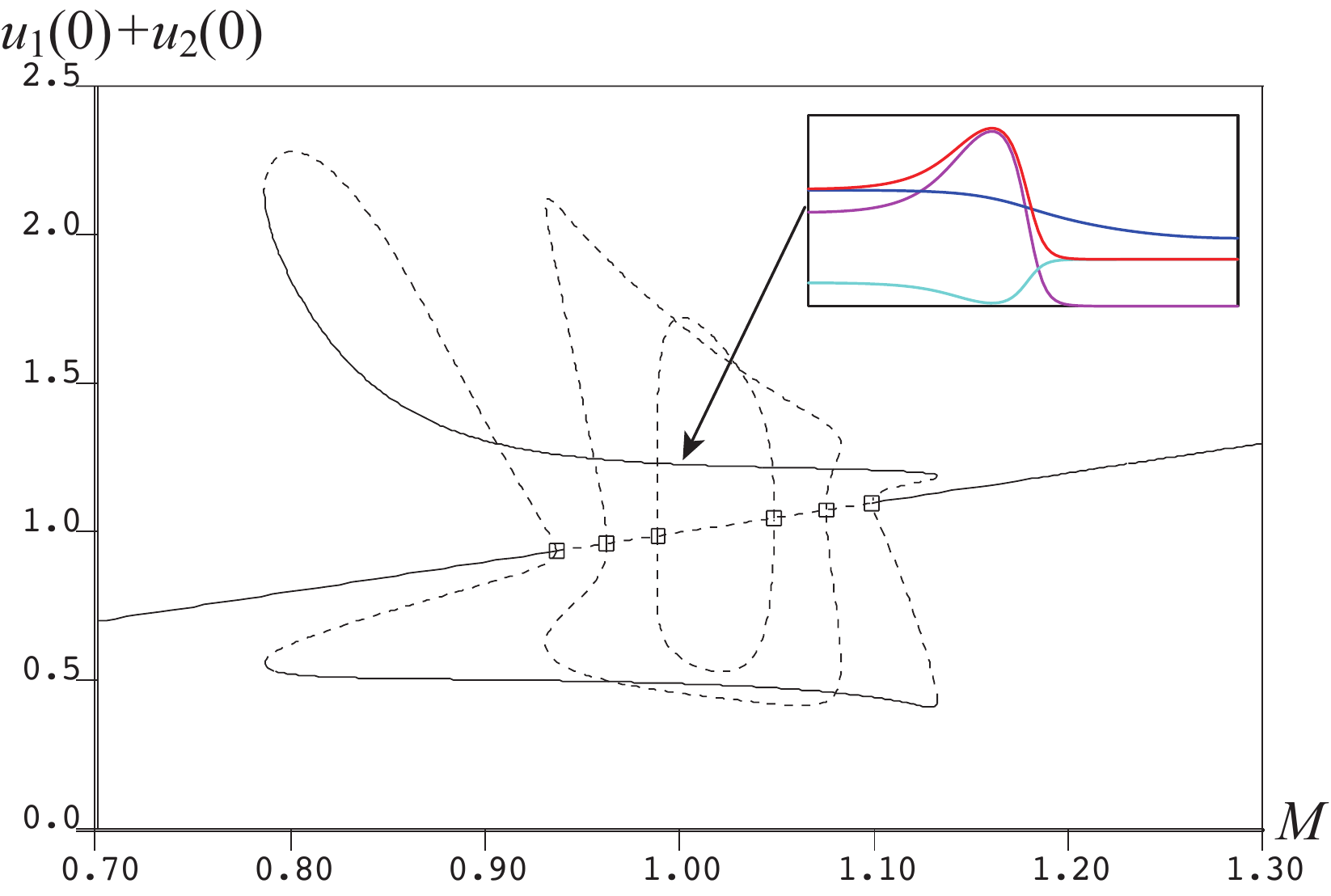} &
\includegraphics[width=65mm]{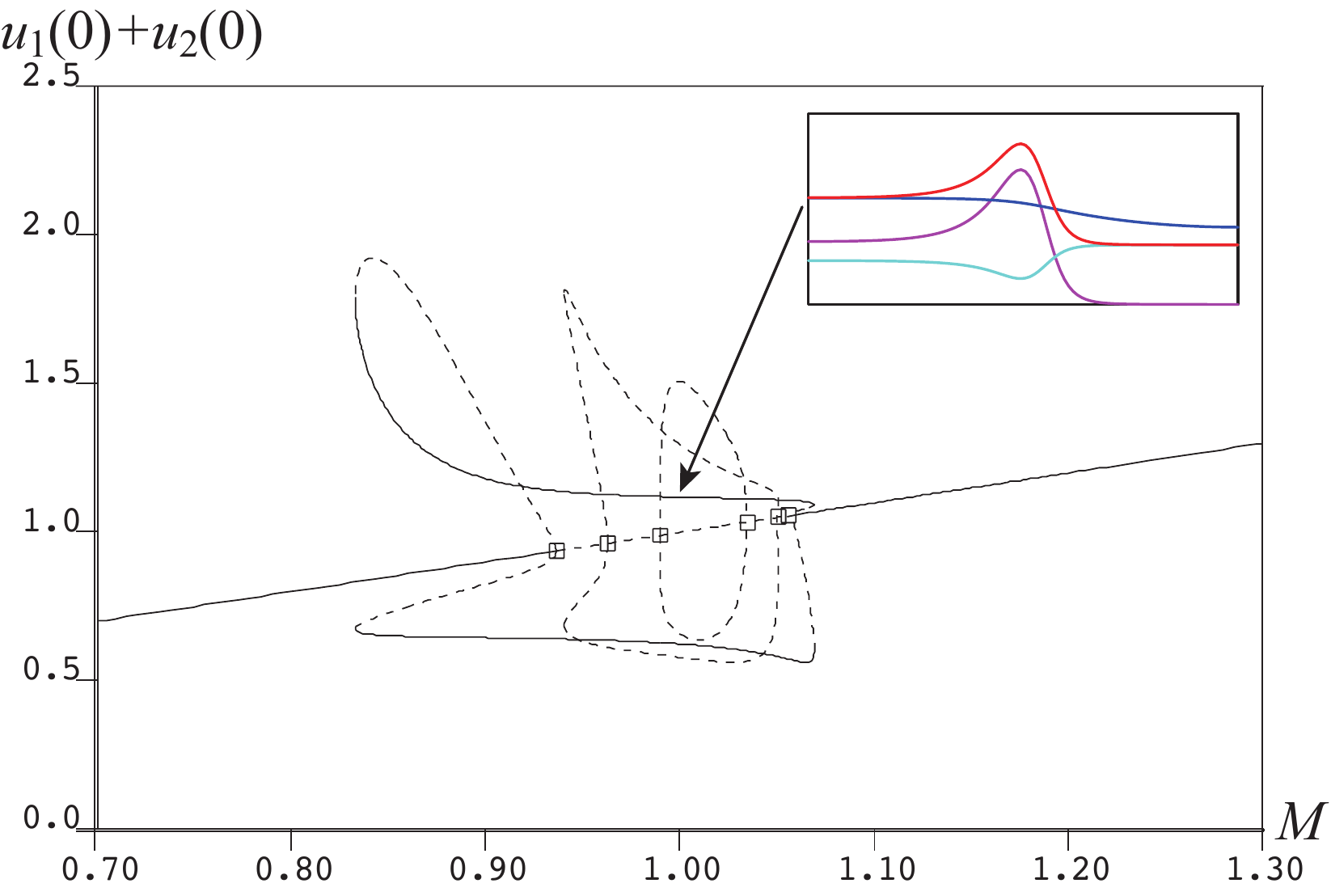}\\
(c) & (d)\\
\includegraphics[width=65mm]{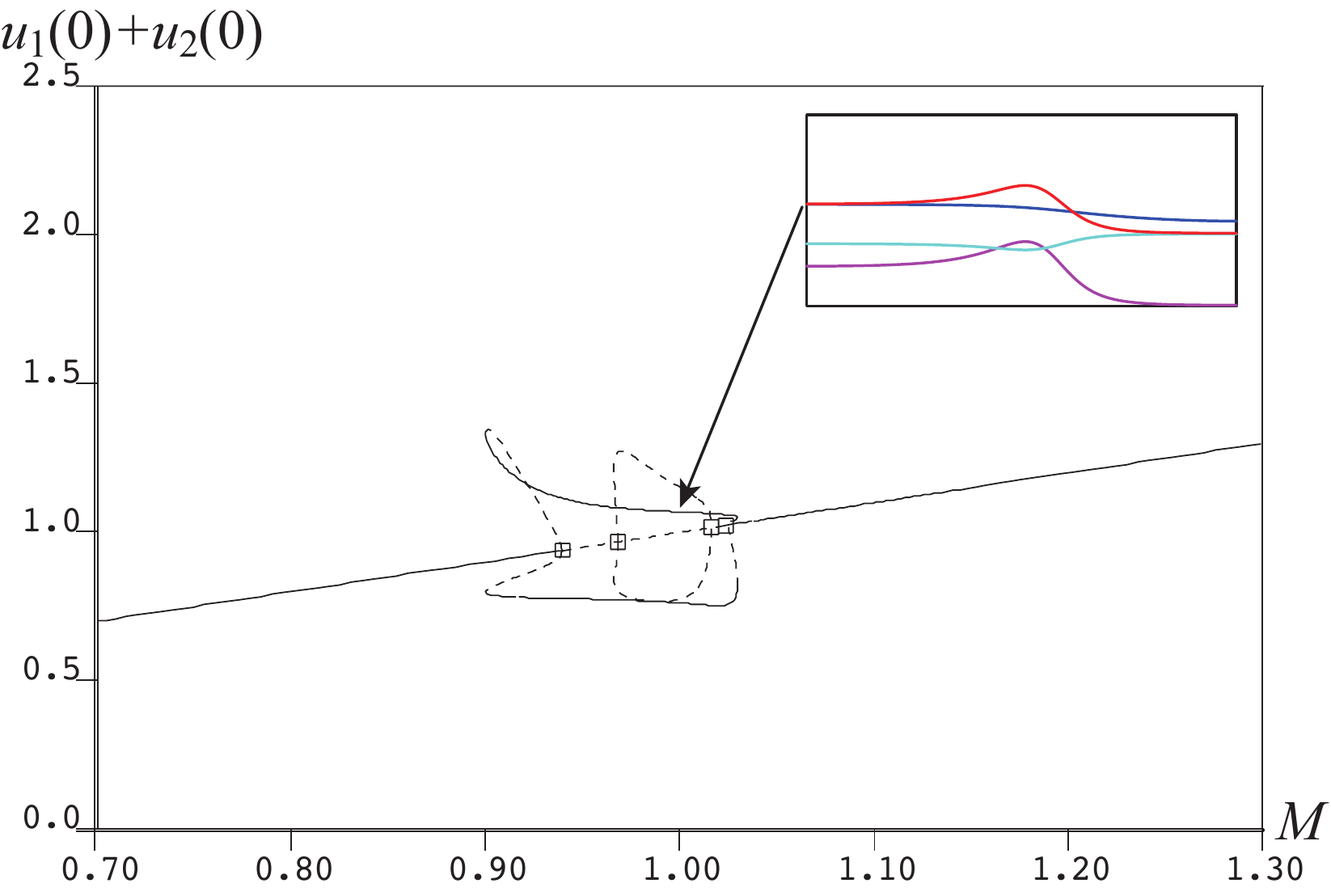} &
\includegraphics[width=65mm]{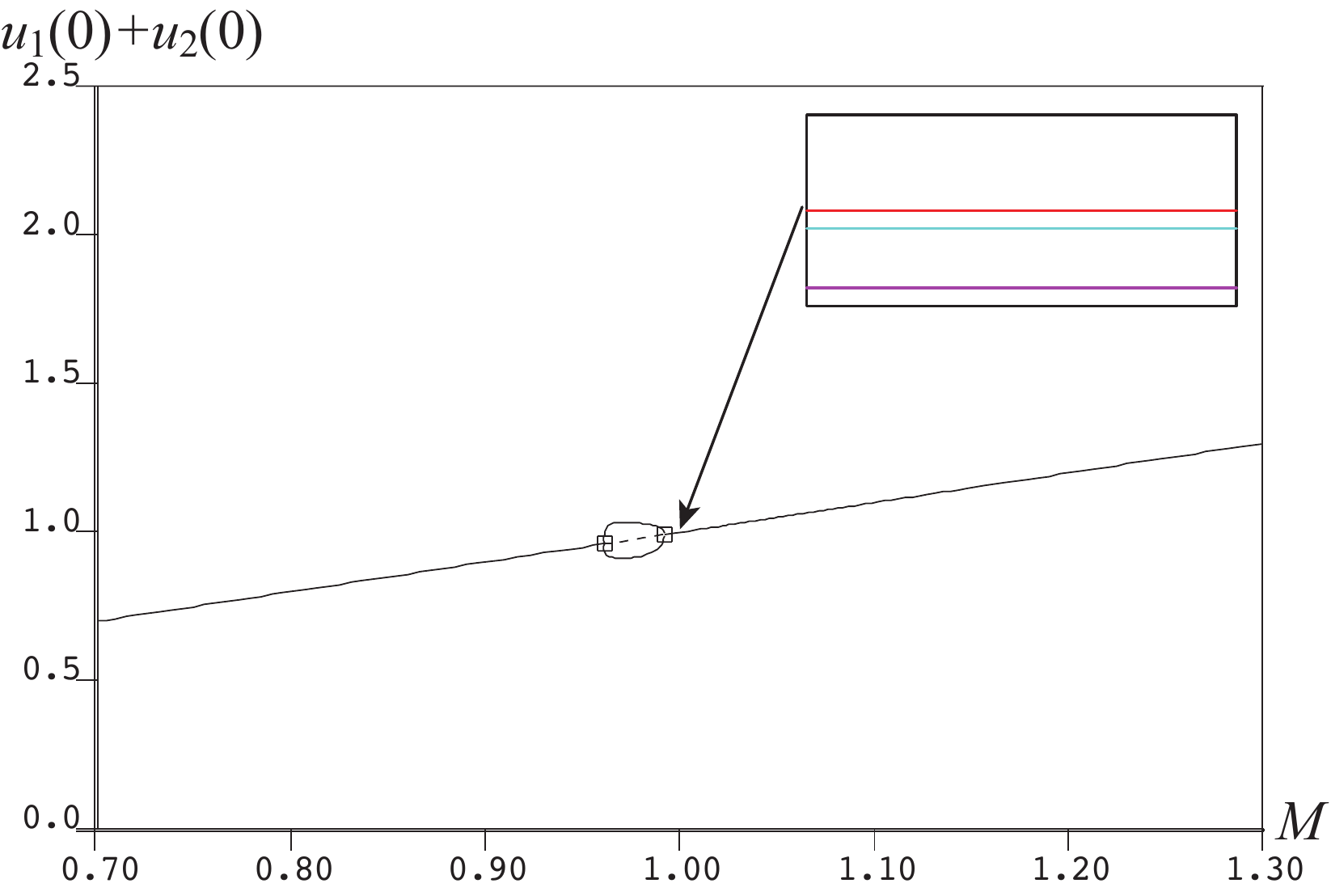}\\
(e) & (f)
\end{tabular}
\caption{Global structure of stationary solutions for \eqref{3RD1}. 
The parameter values are the same as in Figure \ref{exa_fig1} except for $v^\#$. 
(a) $q(v)=\frac{1}{2} \left( 1-\tanh(\gamma_1(v-v^*))\right)$.
(b) $v^\#=1.5$.
(c) $v^\#=1.25$
(d) $v^\#=1.125$. 
(e) $v^\#=1.06$. 
(f) $v^\#=1.02$. 
}
\label{fig_ab}
\end{center}
\end{figure}
Figure \ref{fig_ab} (a) is the global structure for $q(v)=\frac{1}{2} \left( 1-\tanh(\gamma_1(v-v^*))\right)$, which is discussed in \cite{Funaki-2012}. This case does not consider the repulsion for the high population density. 
Figures \ref{fig_ab} (b)-(f) are the cases with avoiding effect from crowded region. 
%, $q(v)=\frac{1}{2} \left( 1-\tanh(\gamma_1(v-v^*))\right)+\frac{1}{2} \left( 1+\tanh(\gamma_2(v-v^\#))\right)$. 
%Note that the term including $v^\#$ means preventing the overcrowded aggregation, on the other hand, the term including $v^*$ contributes to the instability of the constant steady state and the formation of non-constant stationary solutions. 
Note that the term $q_1(v)$ including $v^*$ contributes to generating an aggregation via an instability of the constant steady state (see also \cite{Funaki-2012}), on the other hand, the term $q_2(v)$ including $v^\#$ means preventing an overcrowded aggregation. 
Therefore, the global structure shrinks as the value of $v^\#$ becomes small, and eventually when $v^\#=v^*$, $q(v)=1$ and $p(v)=0$, that is, no pattern emerges because all $u_1$ changes into $u_2$ which is governed by a diffusion equation. 
In other words, if the value of $v^\#$ approaches $v^*$, then the ratio of $u_2$ which diffuses faster increases. Consequently, it is hard to form self-organized aggregation.  
Actually, in Figure \ref{fig_ab}, a stably stationary solution with $M=1$ is displayed in each figure. As $v^\#$ is close to $v^*$, one can see that the ratio of $u_2$ (cyan) (resp. $u_1$ (magenta)) relatively increases (resp. decreases). 

%each individual tends to escape from the crowded region, which means a small $v^\#$ value, it is hard to form aggregation.  
%These results suggest that strong repulsive effect suppresses the self-organized aggregation. 

%\begin{figure}[htbp]
%\begin{center}
%\begin{tabular}{cc}
%\includegraphics[width=60mm]{GRAPH2-eps-converted-to.pdf}&
%\includegraphics[width=60mm]{GRAPH1-eps-converted-to.pdf}
%\end{tabular}
%\caption{Stationary solutions. $\varepsilon=0.001$. }
%\label{SS}
%\end{center}
%\end{figure}

\subsection{System with growth term}
In this subsection, we discuss the system with growth term
\begin{equation}
\label{3RD2}
\left\{
\begin{aligned}
\partial_t u_1&=d\partial^2_x u_1 + a_1(1-u_1-u_2)u_1-\frac 1 \varepsilon (q(v)u_1-p(v)u_2),\\
\partial_t u_2&=(d+D)\partial^2_x u_2 + a_2(1-u_1-u_2)u_2+\frac 1 \varepsilon (q(v)u_1-p(v)u_2),\\
\partial_t v &= D_v \partial^2_x v +\alpha (u_1+u_2)-\beta v.  
\end{aligned}
\right.
\end{equation}
in one-space dimension under the Neumann boundary conditions. 
Unlike the system without growth term \eqref{3RD1}, \eqref{3RD2} possesses a unique constant steady state given by 
$(\overline{u}_1,\overline{u}_2,\overline{v})=(\frac{p(\alpha/\beta)}{p(\alpha/\beta)+q(\alpha/\beta)},\frac{q(\alpha/\beta)}{p(\alpha/\beta)+q(\alpha/\beta)},\frac{\alpha}{\beta})$. 
 Linearizing \eqref{3RD2} around the constant steady state, the linearized matrix for $n$-Fourier cosine mode is obtained as follows: 
\[
B_n=
\begin{pmatrix}
b_{11} & -a_1 \overline{u}_1+\frac 1 \varepsilon p(\overline{v}) & -\frac1 \varepsilon (q'(\overline{v})\overline{u}_1-p'(\overline{v})\overline{u}_2)\\
-a_2\overline{u}_2+\frac1 \varepsilon q(\overline{v}) & b_{22} & \frac 1 \varepsilon (q'(\overline{v})\overline{u}_1-p'(\overline{v})\overline{u}_2)\\
\alpha & \alpha & -D_v\left(\frac{n\pi}{L}\right)^2-\beta
\end{pmatrix}, 
\]
where $b_{11}=-d\left(\frac{n\pi}{L}\right)^2-a_1\overline{u}_1-\frac 1 \varepsilon q(\overline{v})$ and $b_{22}=-(d+D)\left(\frac{n\pi}{L}\right)^2-a_2\overline{u}_2-\frac 1 \varepsilon p(\overline{v})$. 
It follows from linear stability analysis that the neutral stability curve for $n$ ($n=1,2,3,\cdots$) is given by $\text{det}B_n=0$. 
Here, we assume that the growth rates $a_1$ and $a_2$ are equal for simplicity, namely $r=a_1=a_2$. 
Then, the stable and unstable regions of the constant steady state $(\overline{u}_1,\overline{u}_2,\overline{v})$, and the neutral stability curves in $(r,D)$-plane are shown in Figure \ref{NSC-3RDwG}. 
\begin{figure}[htbp]
\begin{center}
\includegraphics[width=80mm]{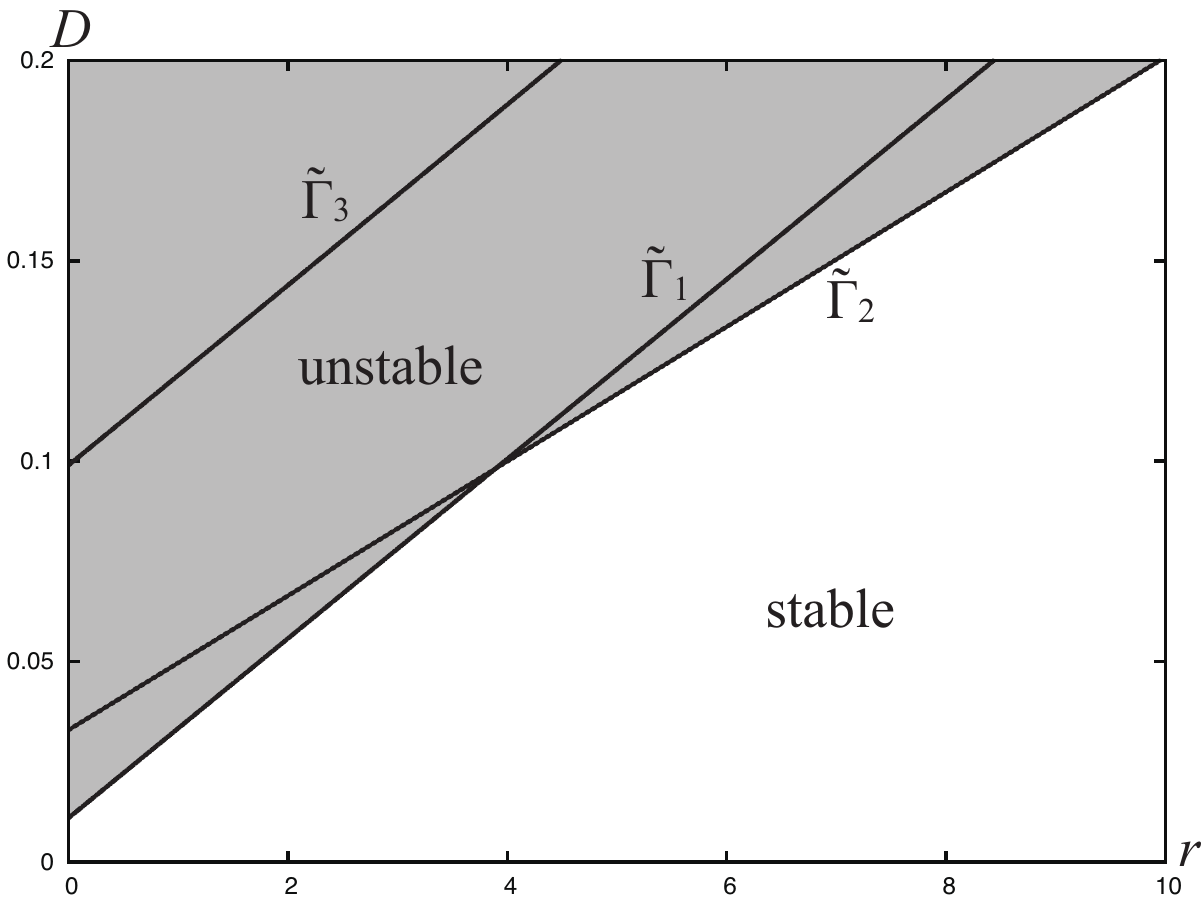}
\caption{The stable and unstable regions of the constant steady state $(\overline{u}_1,\overline{u}_2,\overline{v})$ and the neutral stability curves which defined by $\tilde{\Gamma}_n=\{(r,D)\in \mathbb{R}_+^2 : \text{det}B_n=0\}$ for each $n$. Here, assume that the growth rate $a_1$ and $a_2$ are equal, that is $r=a_1=a_2$. 
The parameter values are $v^\#=1.25$, $\varepsilon=0.001$ and $L=1$.}
\label{NSC-3RDwG}
\end{center}
\end{figure}
One can see from the figure that the constant steady state is stable for suitably large $r$ values or small $D$ values, on the other hand, it is unstable for suitably small $r$ values or large $D$ values. 
Thus, it is expected that there are non-constant stationary solutions in the unstable region due to the instability of the constant steady state. 
Similarly to the previous subsection, a numerical bifurcation software AUTO helps to unveil global structures of stationary solutions for \eqref{3RD2}. 
\begin{figure}[htbp]
\begin{center}
\includegraphics[width=100mm]{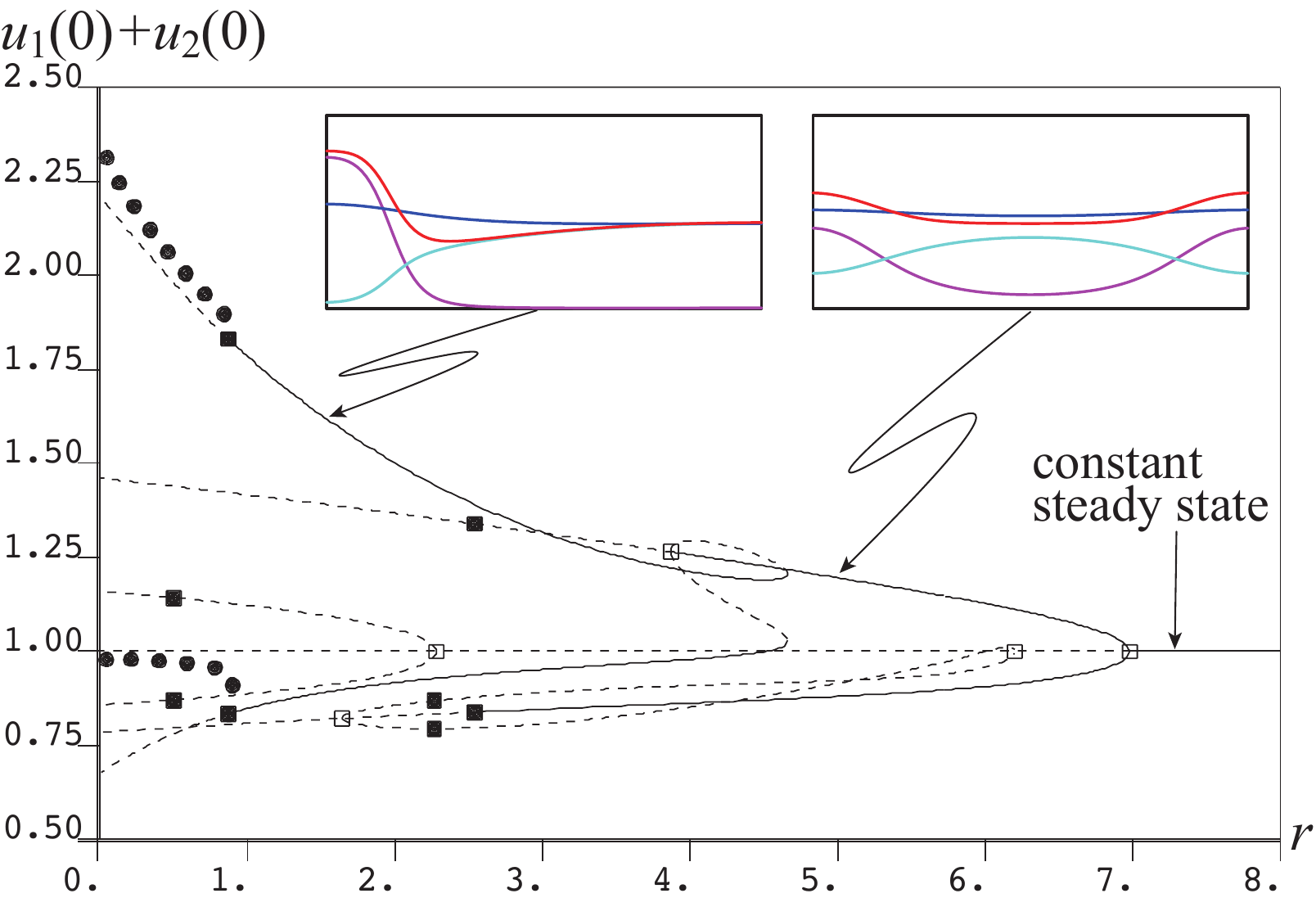}
\caption{Global structure of stationary solutions for \eqref{3RD2} when $D=0.15$. The horizontal and the vertical axes denote a bifurcation parameter $r=a_1=a_2$ and the value of $u_1+u_2$ at $x=0$, respectively. The solid and dashed curves respectively means stable solution branch and unstable one. The symbols $\square$ and $\blacksquare$ represent a pitchfork bifurcation point and a Hopf bifurcation point, respectively. The black circle $\bullet$ means a stable periodic solution branch. The other parameter values are the same as in Figure \ref{NSC-3RDwG}. }
\label{exa_fig2}
\end{center}
\end{figure}
Figure \ref{exa_fig2} exhibits a global bifurcation diagram of \eqref{3RD2} for $D=0.15$. 
One can see from the linear stability analysis in Figure \ref{NSC-3RDwG} that 2-mode, 1-mode and 3-mode are destabilized successively for $D=0.15$ as the value of $r$ gradually decreases. 
Figure \ref{exa_fig2} corresponds to the global bifurcation diagram for $D = 0.15$, where stably non-constant stationary solutions primarily bifurcate from the constant steady state via a supercritical pitchfork bifurcation when $r$ decreases. 
Therefore, stably 1-mode and 2-mode stationary solutions are observed for moderate $r$ values (see also the solution profiles in Figure \ref{exa_fig2}). 
However, there is no stable stationary solution for $r<0.875497$. 
Instead, stable periodic solution branches (black circles) are observed through a supercritical Hopf bifurcation at $r=0.875497$. 
\begin{figure}[htbp]
\begin{center}
\begin{tabular}{cc}
\includegraphics[width=65mm]{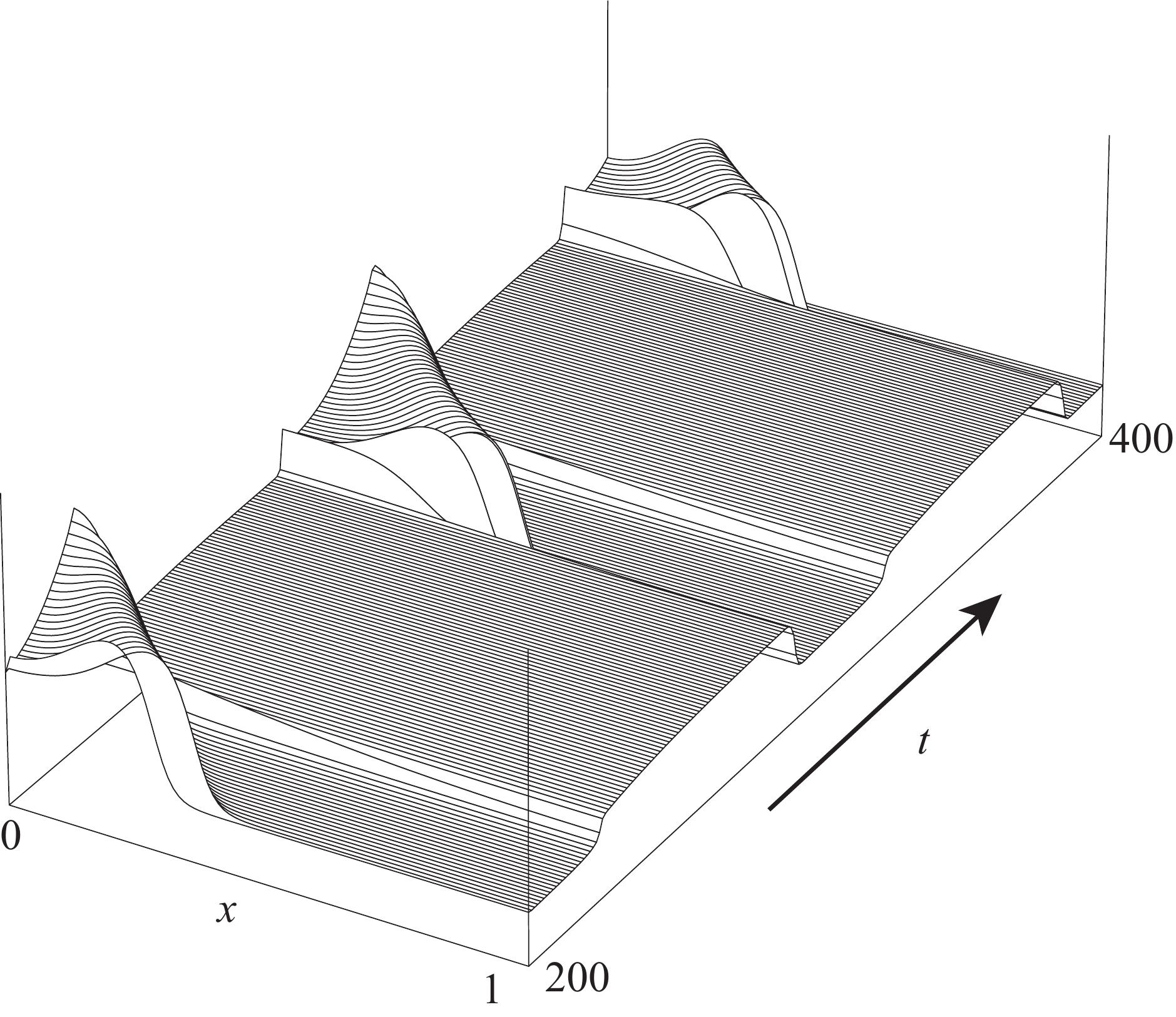} &
\includegraphics[width=65mm]{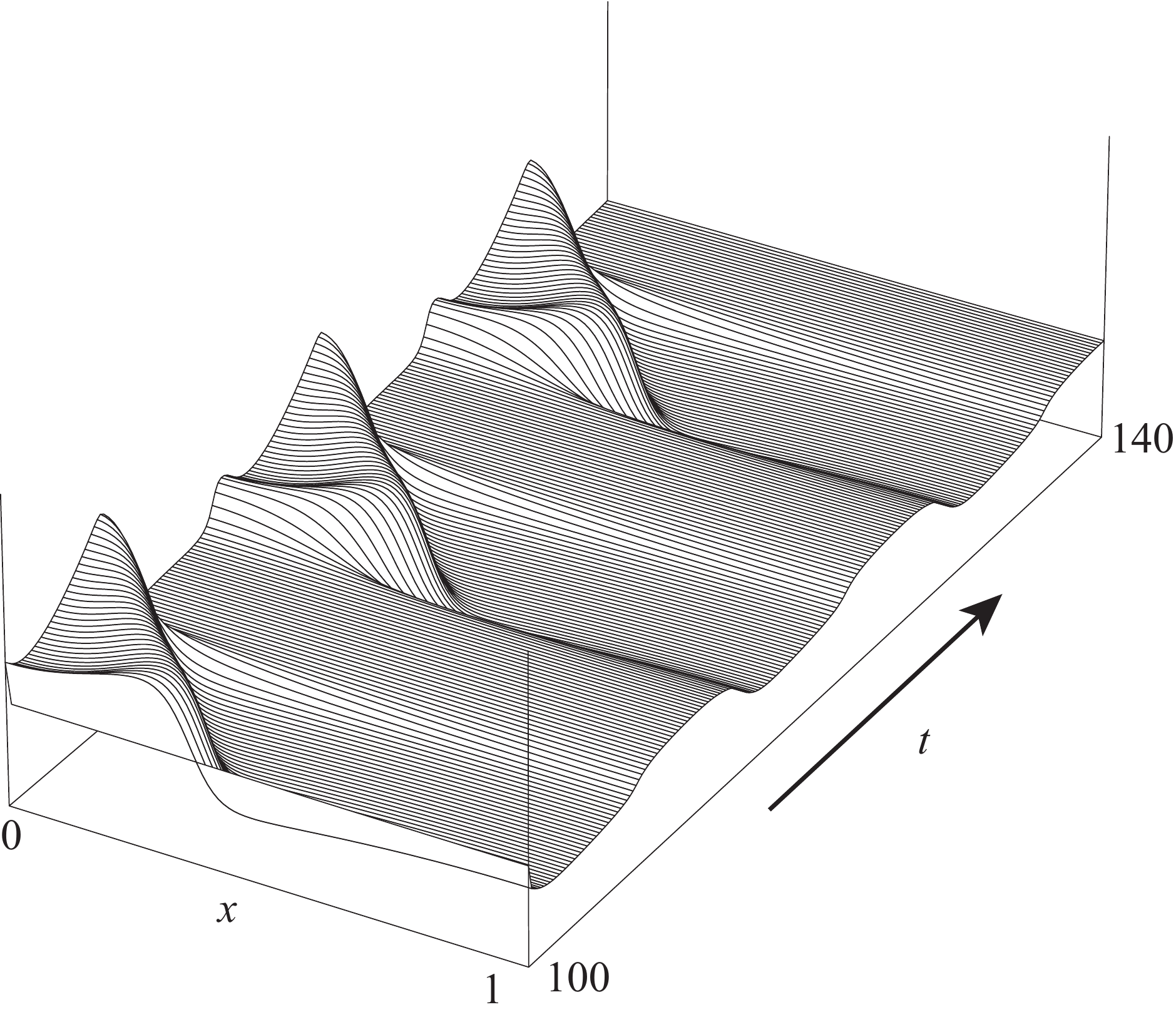}\\
$r=0.05$ & $r=0.5$
\end{tabular}
\caption{Stable periodic solutions arising in Figure \ref{exa_fig2}. Spatio-temporal dynamics of $u_1+u_2$ is displayed. 
}
\label{fig_periodic1}
\end{center}
\end{figure}
A feature of this type of periodic solutions is that the smaller the value $r$ is, the longer a period of the solution becomes, as shown in Figure \ref{fig_periodic1}. 
When $r$ value is quite small, a remarkable periodic solution so called a relaxation oscillation appears. (See also \cite{Ei-2012}.)
Actually, the relaxation oscillation which consists of a fast dynamics and a slow dynamics uses the global bifurcation diagram for \eqref{3RD1} ($r=0$). 
\begin{figure}[htbp]
\begin{center}
\includegraphics[width=90mm]{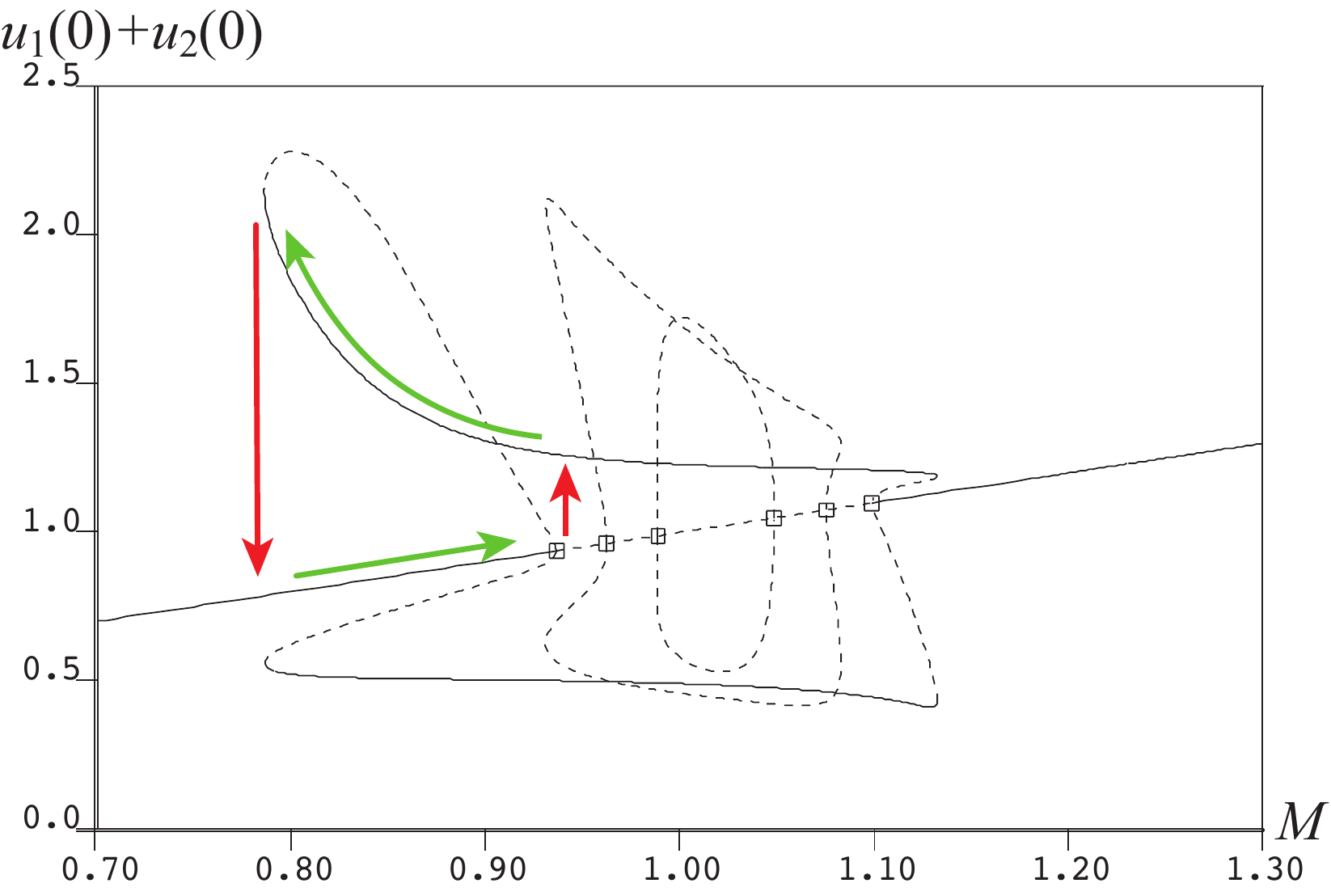}
\caption{The mechanism of relaxation oscillation using the bifurcation structure for $r=0$. The red arrow and the green one respectively mean a flow of fast dynamics and a flow of slow one. }
\label{RO}
\end{center}
\end{figure}
Figure \ref{RO} indicates a conceptual trajectory on the relaxation oscillation. 
For the detailed dynamics of relaxation oscillation, we refer to \cite{Ei-2012}. 

Next, we investigate the change of global structure of stationary solutions when the value $v^\#$ varies. 
As well as the case of \eqref{3RD1}, the small $v^\#$ values simplify global structure of stationary solutions as shown in Figure \ref{fig_ab-1}.  
\begin{figure}[htbp]
\begin{center}
\begin{tabular}{cc}
\includegraphics[width=65mm]{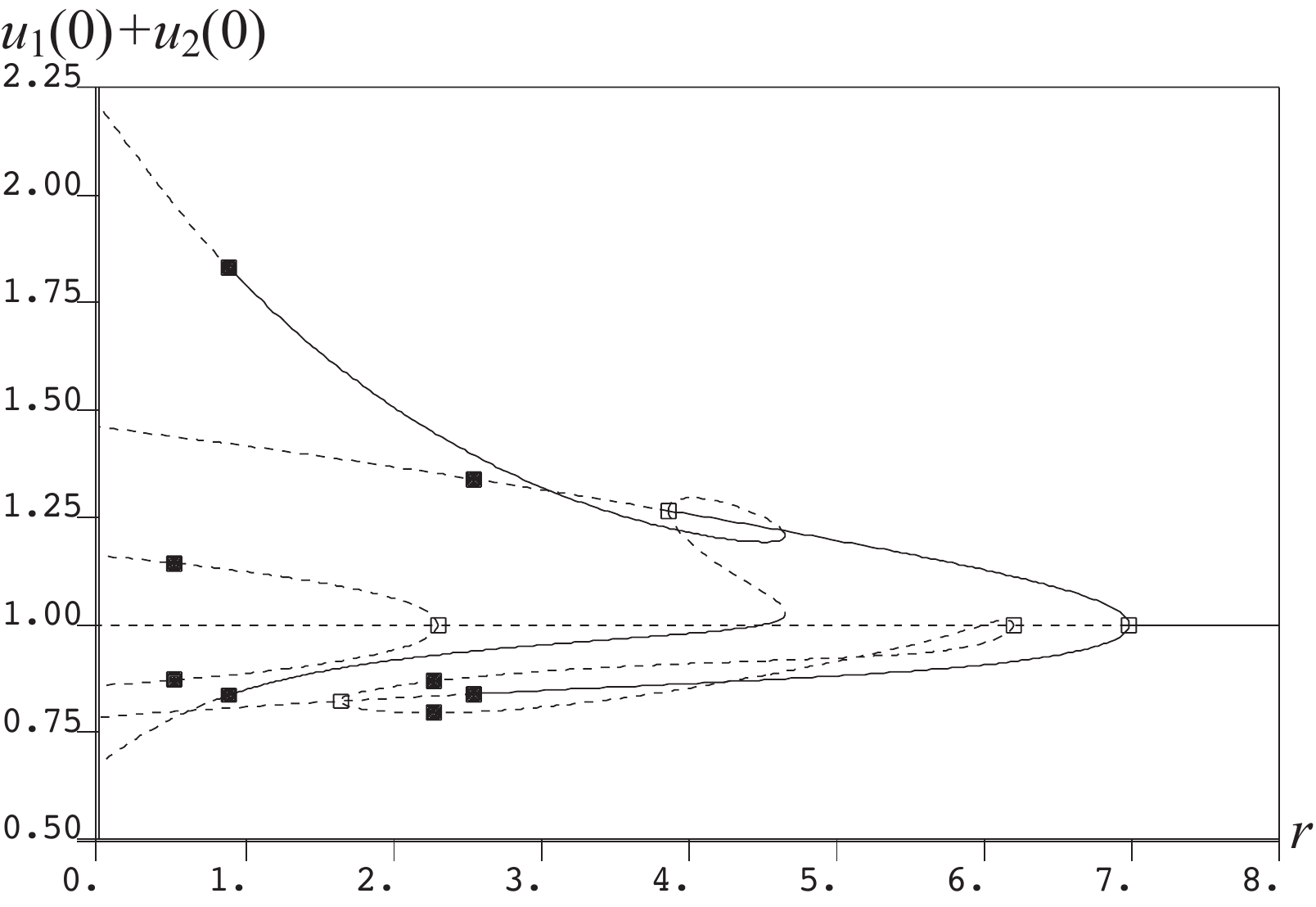} &
\includegraphics[width=65mm]{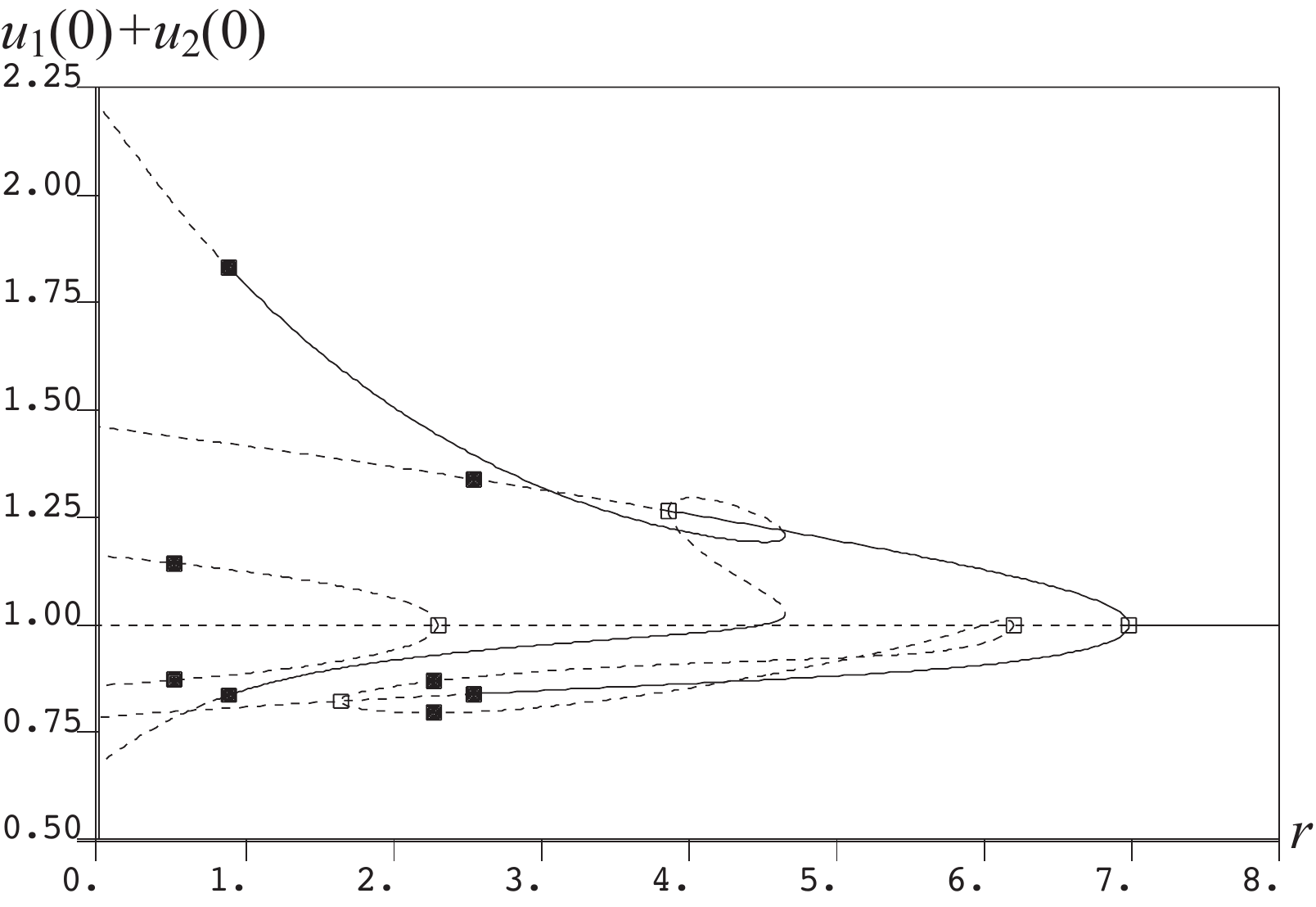}\\
(a) & (b)\\
\includegraphics[width=65mm]{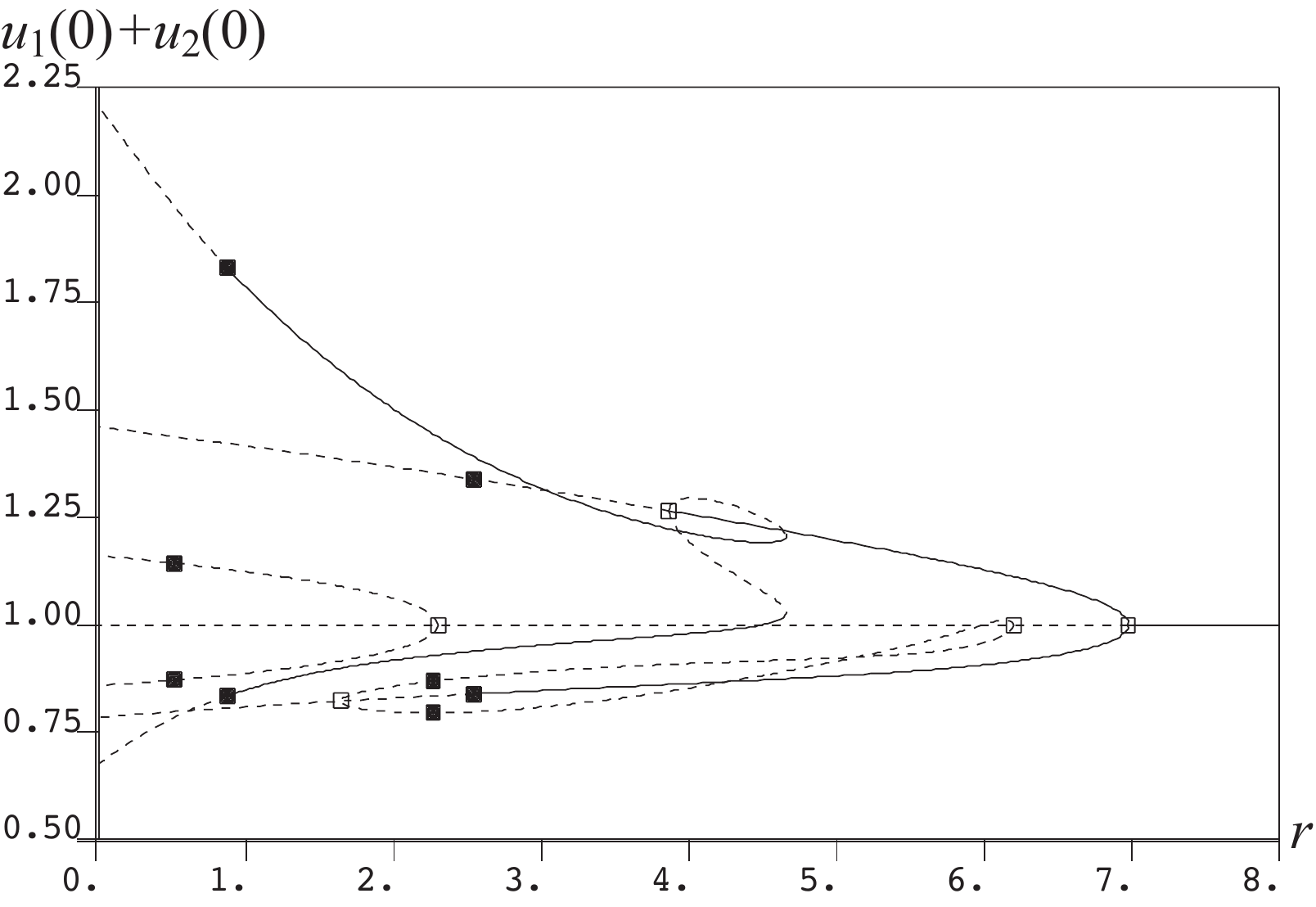} &
\includegraphics[width=65mm]{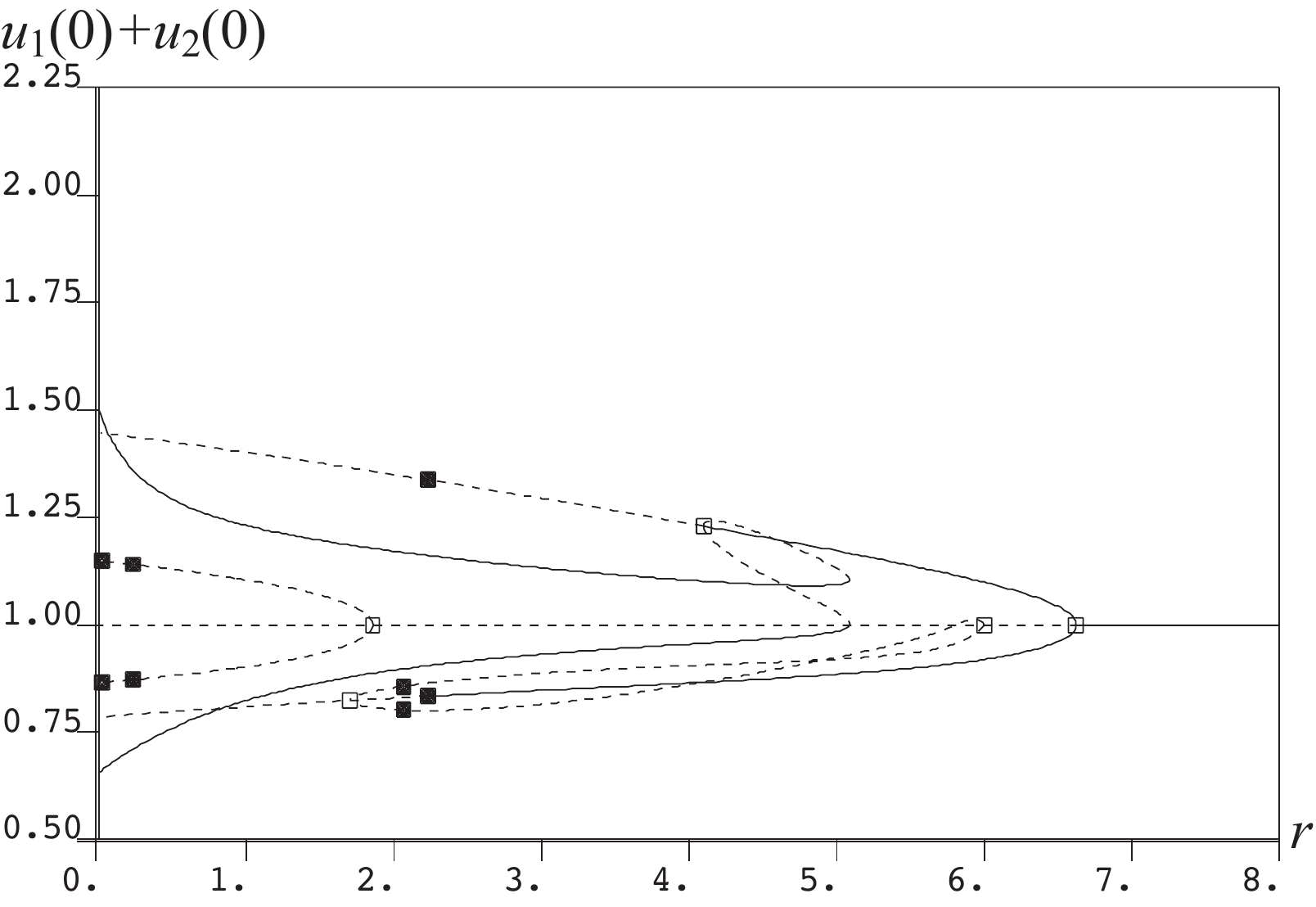}\\
(c) & (d)\\
\includegraphics[width=65mm]{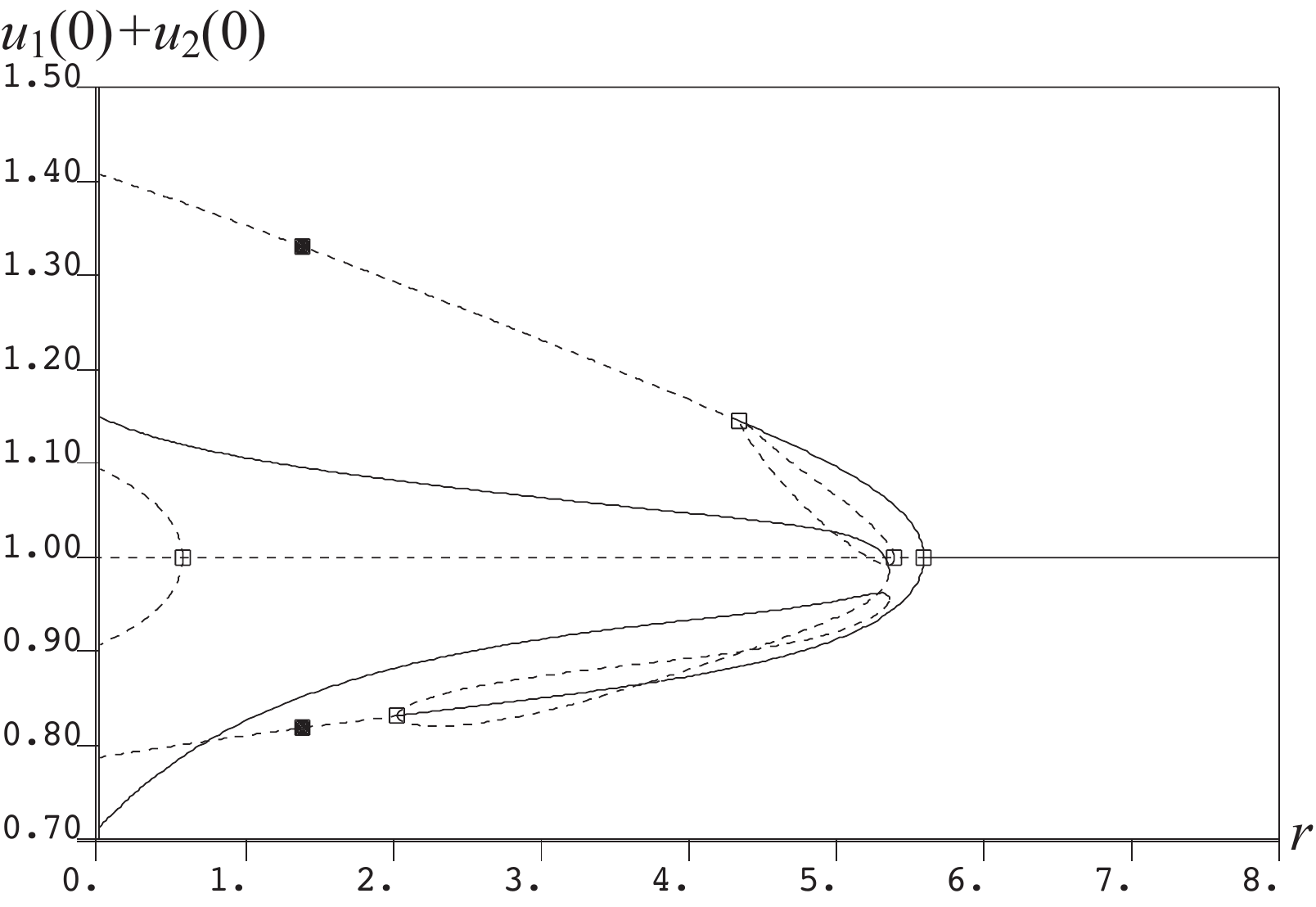} &
\includegraphics[width=65mm]{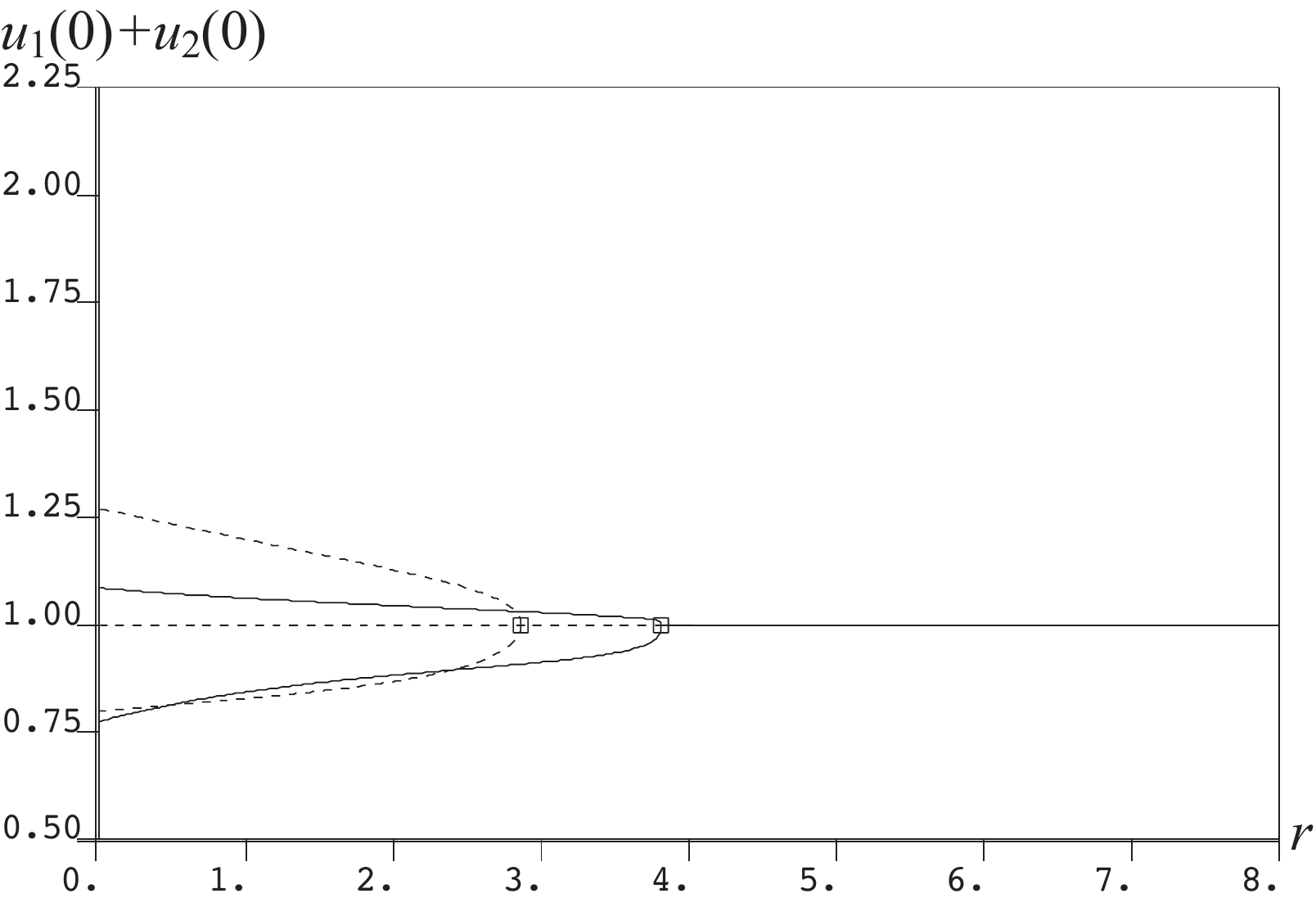}\\
(e) & (f)
\end{tabular}
\caption{Global structure of stationary solutions for \eqref{3RD2}. 
The parameter values are the same as in Figure \ref{exa_fig2} except for $v^\#$. 
(a) $q(v)=\frac{1}{2} \left( 1-\tanh(\gamma_1(v-v^*))\right)$.
(b) $v^\#=1.5$.
(c) $v^\#=1.25$
(d) $v^\#=1.125$. 
(e) $v^\#=1.09$. 
(f) $v^\#=1.06$. 
}
\label{fig_ab-1}
\end{center}
\end{figure}
In particular, when $v^\#$ is suitably small, the Hopf bifurcation points which are the onset of relaxation oscillation in Figure \ref{RO} disappear.

%%%%%%%%%% ADD THE FOLLOWING SENTENCES %%%%%%%%%%
Generally, it seems that the intrinsic growth rate of inactive subpopulation is greater than that of active subpopulation because the regions where inactive individuals stay are a better environment. 
Therefore, it is more natural that the intrinsic growth rates $a_1$ and $a_2$ should be different. 
Here, we set $a_1$ and $a_2$ as $a_1=r$ and $a_2=ar$, and set $r$ as a bifurcation parameter, where $a$ means a ratio between growth rates, so that assume that $0\leq a \leq 1$. When $a=1$, the global structures of stationary solutions correspond to Figures \ref{exa_fig2} and \ref{fig_ab-1}. The global structures with $0\leq a<1$ are shown in Figure \ref{dependency on a}. 
\begin{figure}[htbp]
\begin{center}
\begin{tabular}{cc}
\includegraphics[width=65mm]{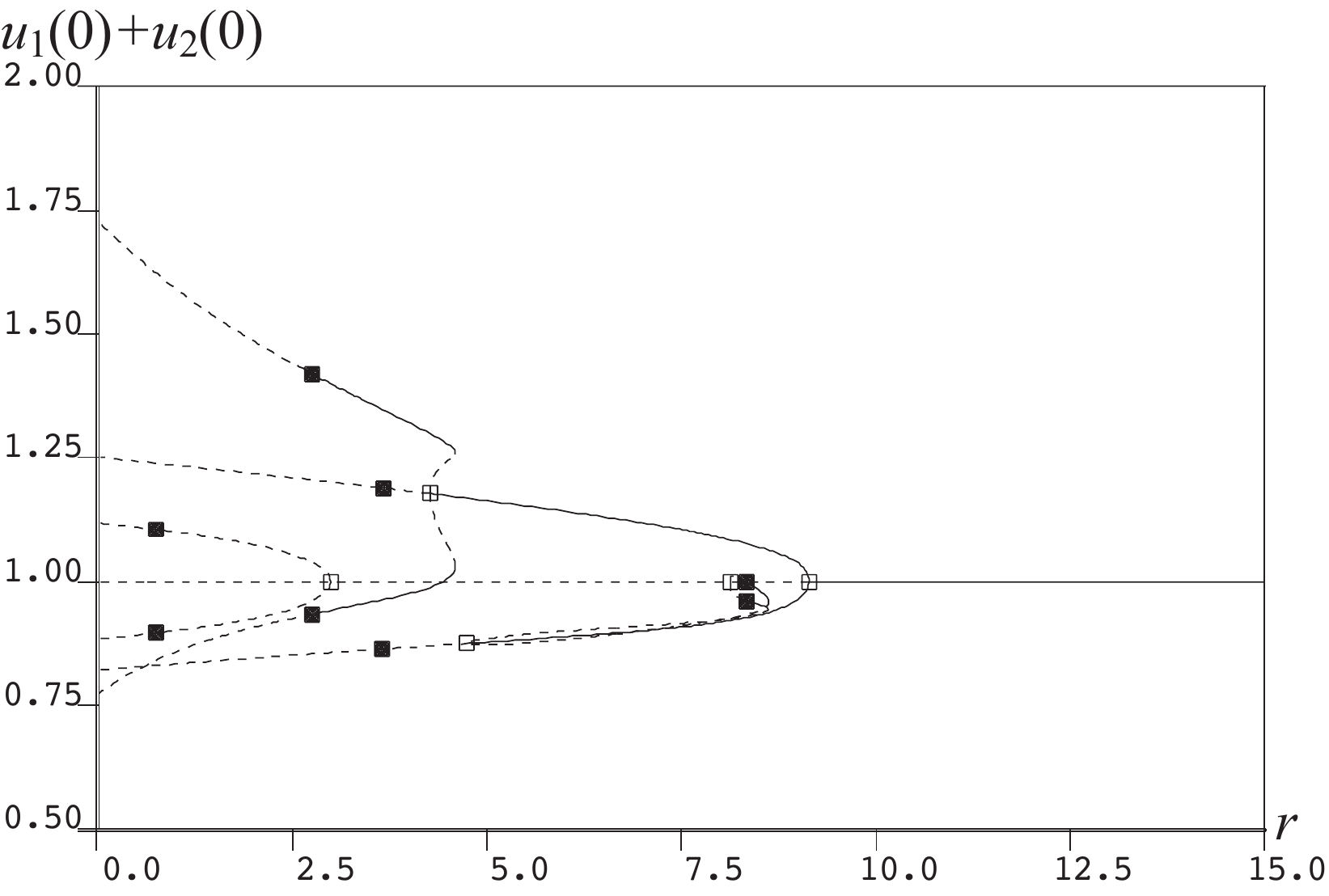}&
\includegraphics[width=65mm]{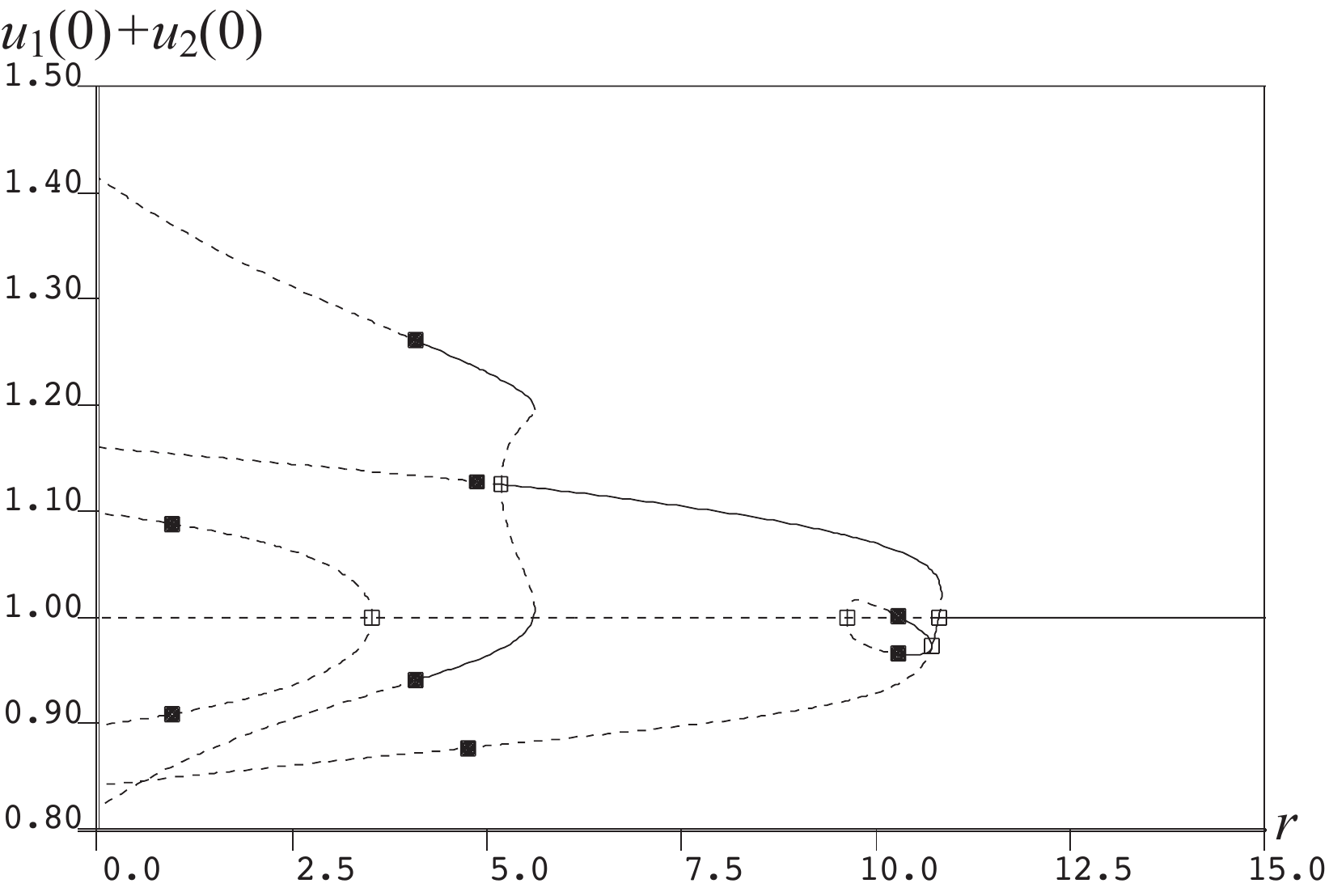}\\
$a=0.5$ & $a=0.25$\\
\includegraphics[width=65mm]{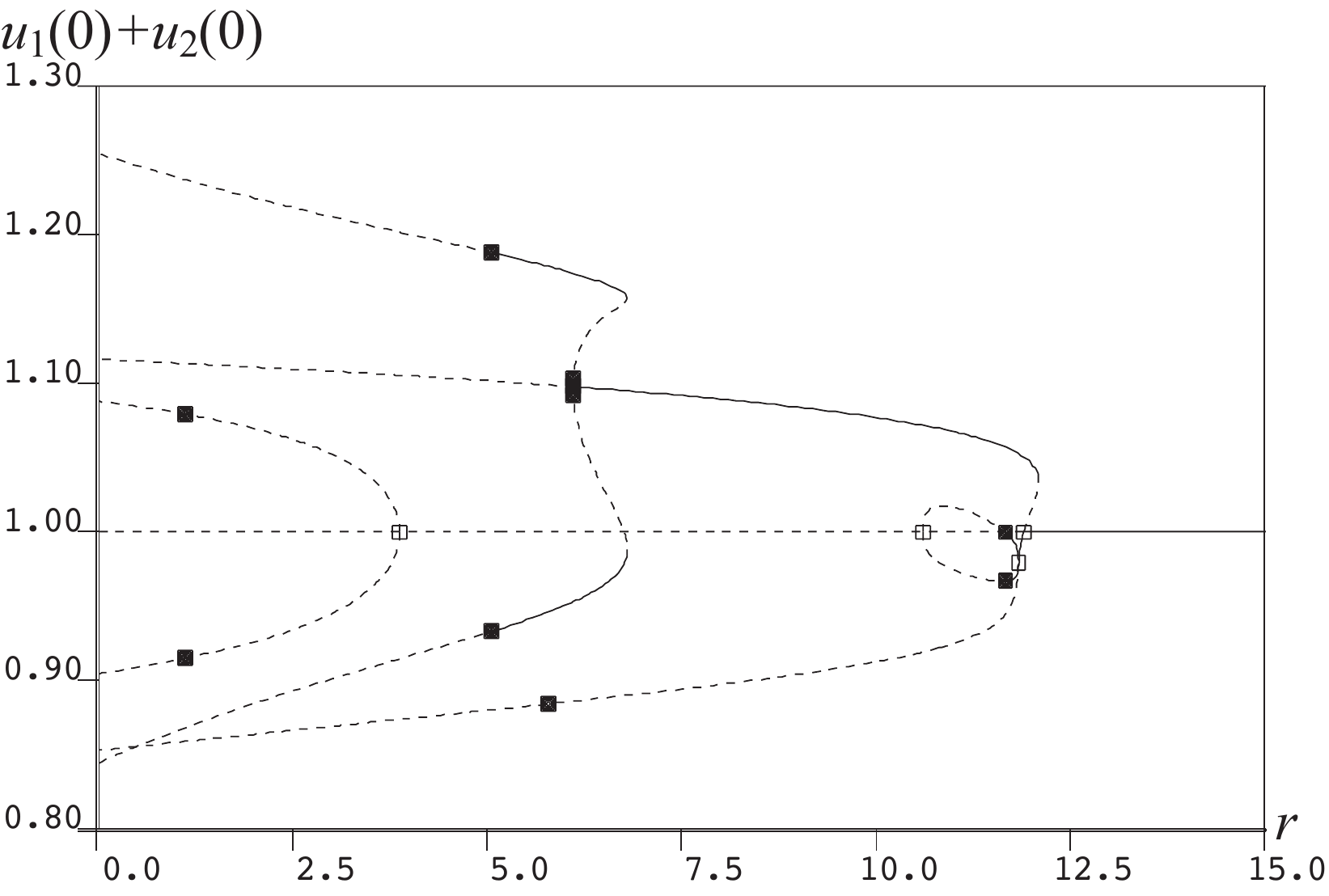}&
\includegraphics[width=65mm]{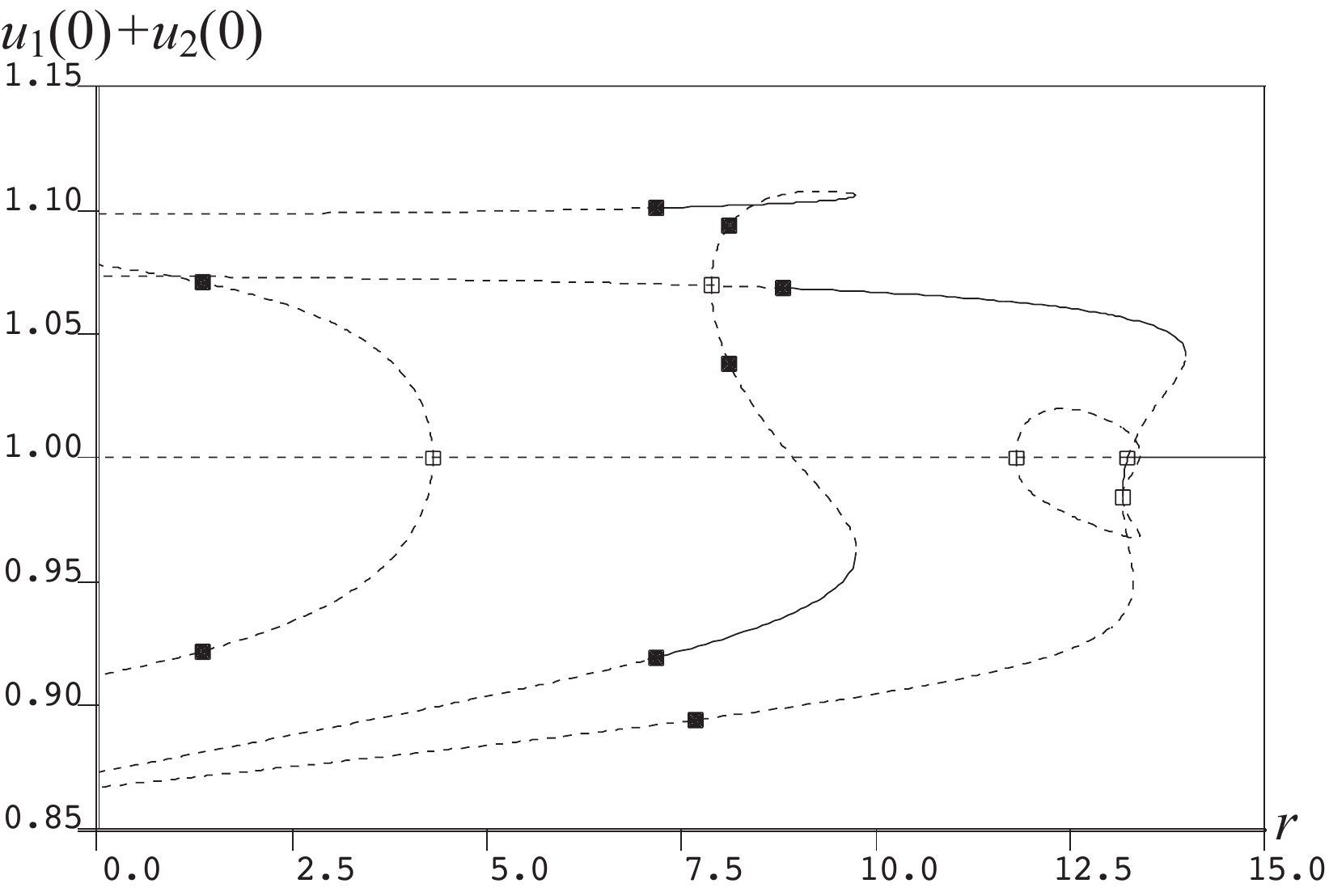}\\
$a=0.125$ & $a=0.0$\\
\end{tabular}
\caption{Global structures of stationary solutions when the value of $a$ varies, where $a_1=r$ and $a_2=ar$ and $r$ is set as a bifurcation parameter. The other parameter values are the same as in Figure \ref{exa_fig2}. }
\label{dependency on a}
\end{center}
\end{figure}
Interestingly, when the intrinsic growth rate  of inactive population density is greater than that of active population density, even the relatively large $r$ values give rise to aggregation patterns. 
Moreover, when the value of $a$ varies, the type of the primal bifurcation from the constant steady state changes from the pitchfork bifurcation ($a=0.5$) to the transcritical bifurcation ($a=0.25$, $a=0.125$, $a=0.0$). One can see that the difference on the intrinsic growth rates between active and inactive population densities influences the pattern dynamics drastically. 
%%%%%%%%%% ADD THE ABOVE SENTENCES %%%%%%%%%%

\subsection{Convergence as $\varepsilon\to+0$}

Finally, we numerically discuss the convergence on stationary solutions as $\varepsilon\to+0$. 
In order to do that, the following question naturally arise: how do global structures of stationary solutions change according to the parameter value $\varepsilon$? 
\begin{figure}[htbp]
\begin{center}
\begin{tabular}{cc}
\includegraphics[width=65mm]{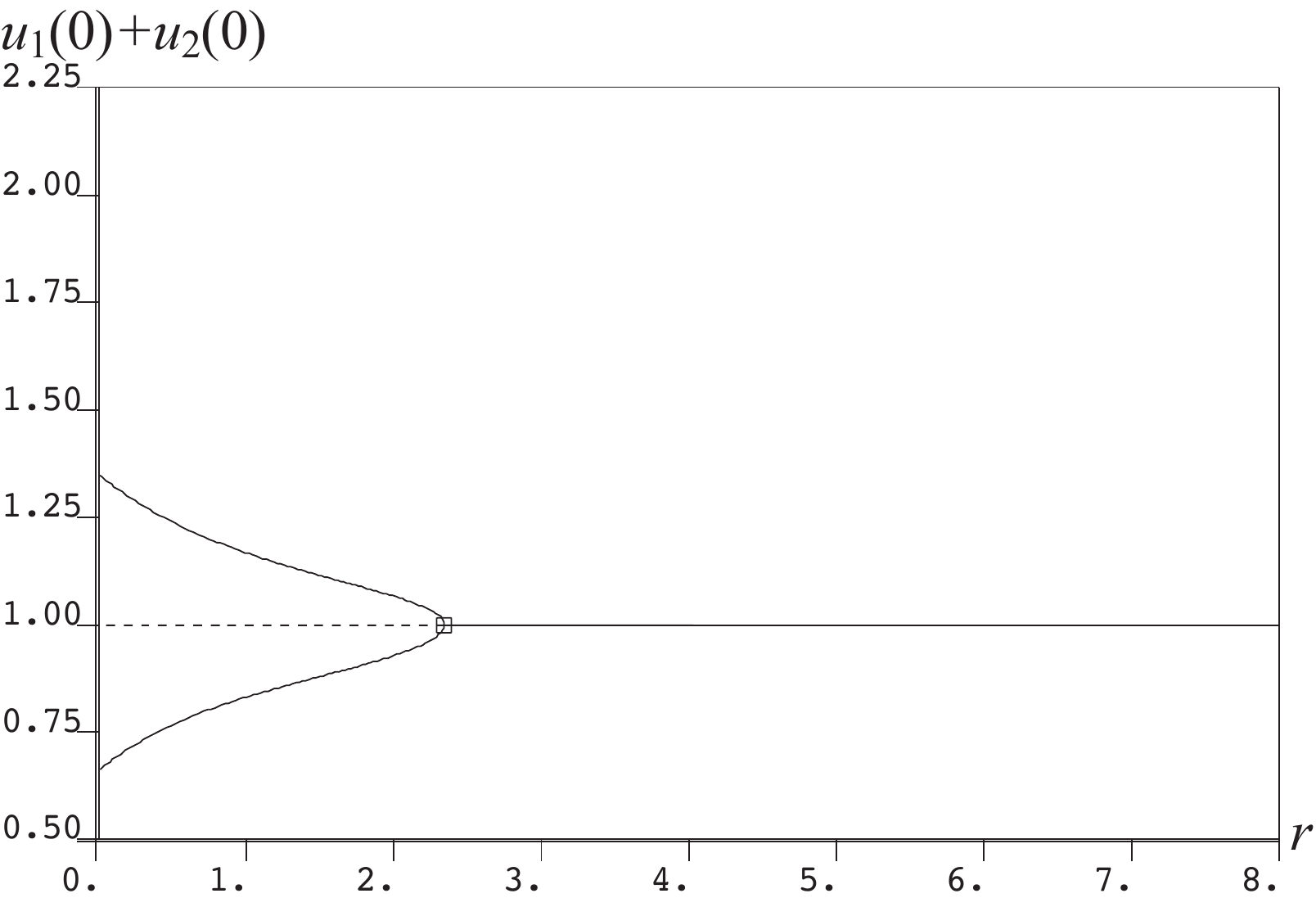}&
\includegraphics[width=65mm]{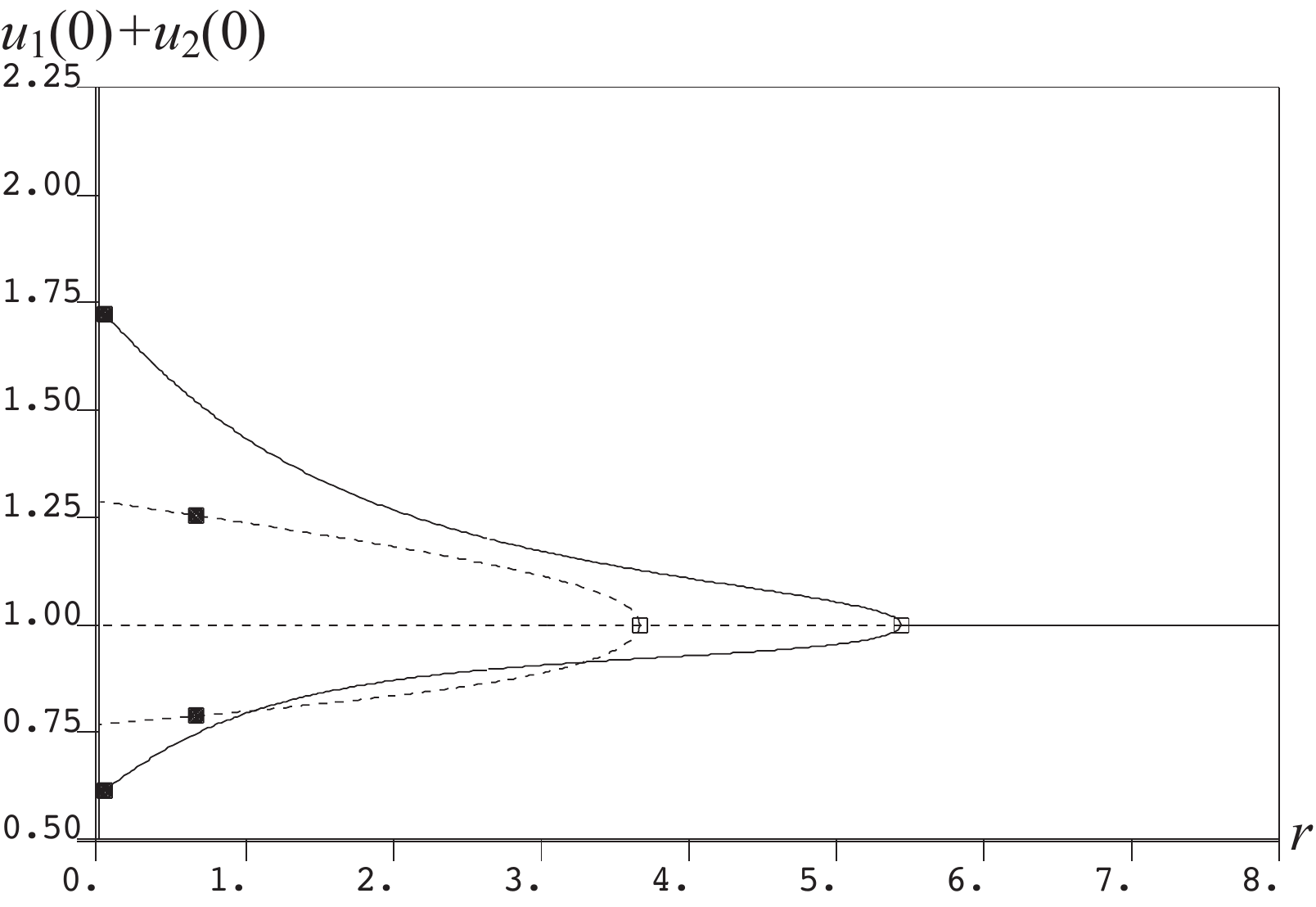}\\
$\varepsilon=1$ & $\varepsilon=0.1$\\
\includegraphics[width=65mm]{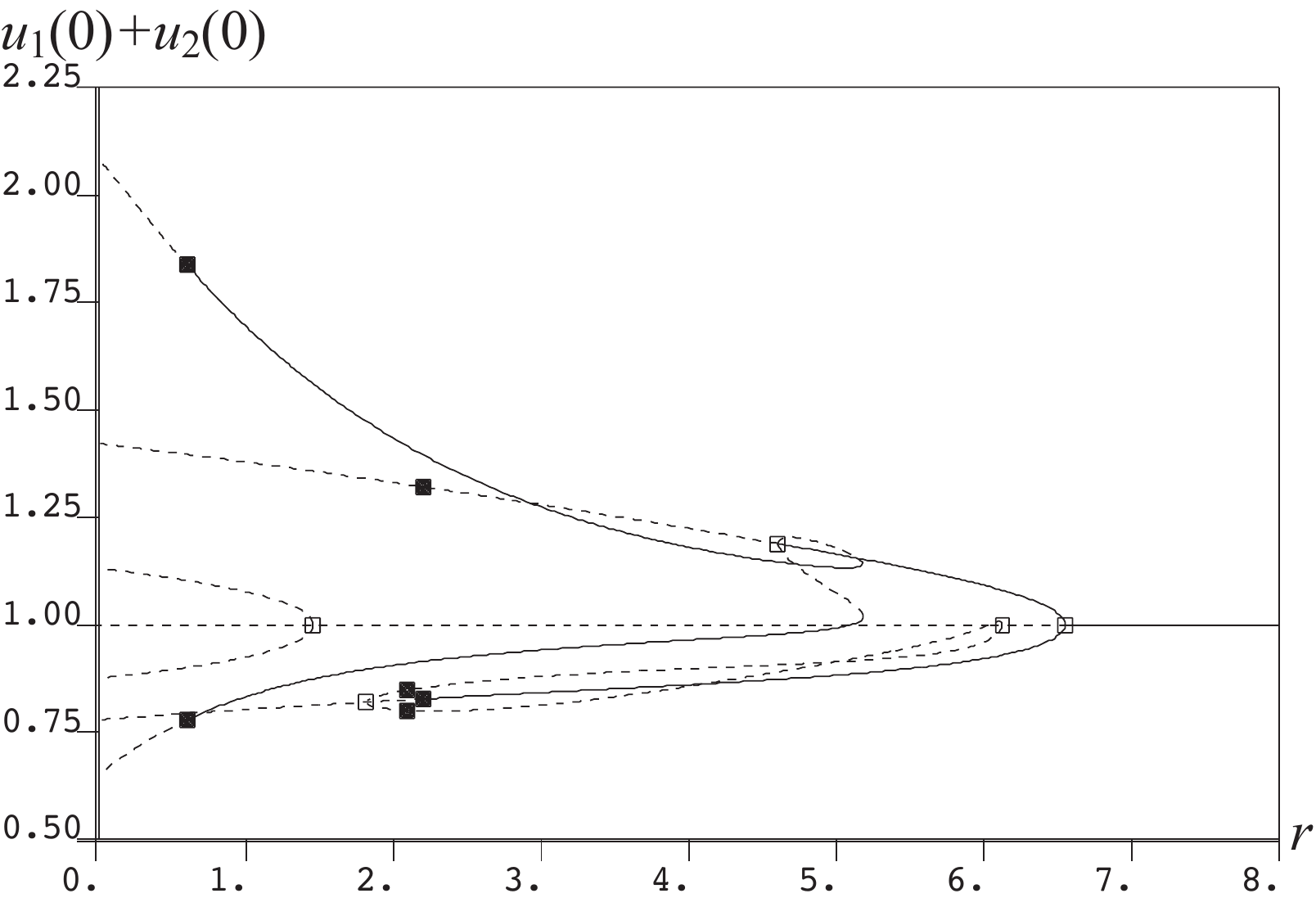}&
\includegraphics[width=65mm]{Figure10-3-eps-converted-to.pdf}\\
$\varepsilon=0.01$ & $\varepsilon=0.001$\\
\includegraphics[width=65mm]{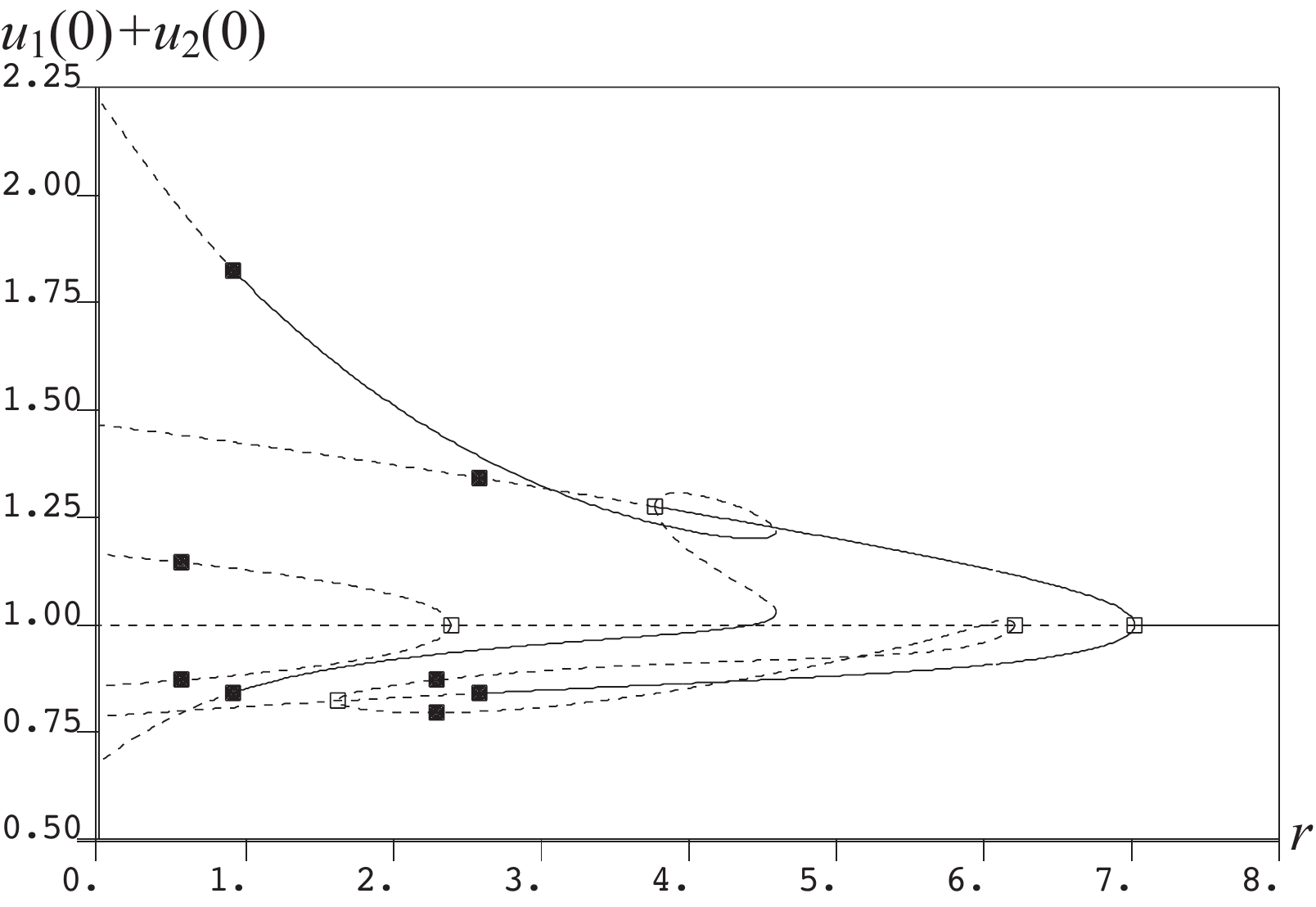} & 
\includegraphics[width=65mm]{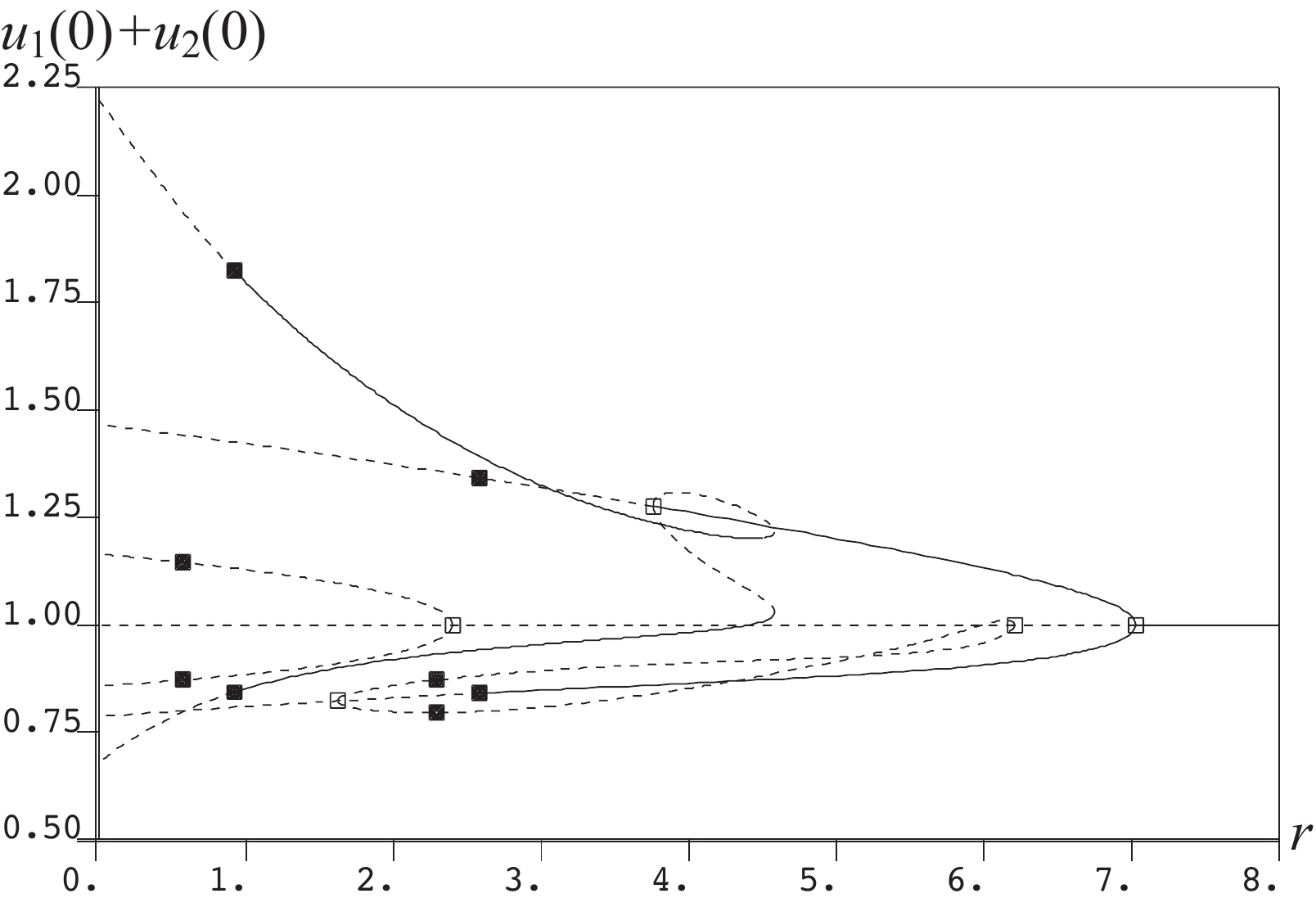}\\
$\varepsilon=0.0001$ & $\varepsilon\to +0$ (Limiting system) 
\end{tabular}
\caption{A transition of global structures of stationary solutions when the value of $\varepsilon$ varies. The parameter values are the same as in Figure \ref{exa_fig2} except for $\varepsilon$. }
\label{dependency on epsilon}
\end{center}
\end{figure}
Figure \ref{dependency on epsilon} gives an answer to the question. 
It seems that the global structures of stationary solutions for \eqref{3RD2} converge to that for the two-component limiting system as the value of $\varepsilon$ tends to zero. 
We numerically confirmed that each solution profiles of \eqref{3RD2}, including periodic solutions observed for the small $r$ values, also approach the corresponding solution profile of the two-component limiting system as $\varepsilon$ tends to zero. 
In addition, for the system without growth term \eqref{3RD1}, a similar result holds. 
These numerical results suggest that not only stationary solutions and periodic solutions but also global solution structure of the three-component reaction-diffusion system converge to those of the two-component limiting system. 
A rigorous proof of the convergence on stationary solutions is not given in this paper, which is our future work.

%When $\varepsilon=0$, a simple global structure which bifurcates only the 1-Fourier mode branch from the constant stationary solution branch is observed. As the value of $\varepsilon$ decreases, the 2- and 3-Fourier mode branches also bifurcate, but they are unstable.  
%When the value $\varepsilon$ is quite small, the bifurcation structure of the three-component reaction-diffusion system closely resembles that of the two-component cross-diffusion system which is formally obtained as a limiting system. 
%These results imply that the global bifurcation diagram of the three-component reaction-diffusion system converges to that of the two-component cross-diffusion system as $\varepsilon\to +0$. In addition, we note that the corresponding solution profiles of the two system are also similar each other and their stability also coincides. 

\section{Singular limit of the problem (\ref{model3}) when $\eps \to 0$} \label{analy}

%\subsection{Main results} \label{sec:analysis}

From a theoretical point of view, we show a singular limit problem of Problem $(\mathcal{P})$, namely,
\begin{equation} \label{eq:01} \left\{ \begin{aligned}
\partial_t \uoneeps & =  d \Delta \uoneeps + a_1 (1 - \uoneeps - \utwoeps)\uoneeps - \frac{1}{\eps}\left( q(\veps)\uoneeps - p(\veps)\utwoeps\right), \\
\partial_t \utwoeps & = (d+D) \Delta \utwoeps  + a_2 (1 - \uoneeps - \utwoeps)\utwoeps + \frac{1}{\eps}\left( q(\veps)\uoneeps - p(\veps)\utwoeps\right), \\
\partial_t \veps & =  D_v\Delta \veps + \alpha(\uoneeps+\utwoeps) - \beta \veps, \\
\end{aligned} \right. \end{equation}
for two functions $p$ and $q$ such that 
\begin{enumerate}[label=(A\theenumi),ref=A\theenumi]
\item\label{A1} $p, q \in C^1(\re_{+},\re_{+})$ with $p(s) > 0$, $q(s) > 0$ for $s > 0$, and
\begin{equation}\label{bound_pq}
	\sup_{s>0}\bra{\left|\frac{p'(s)}{p(s)}\right| + \left|\frac{q'(s)}{q(s)}\right| } < +\infty.
\end{equation}
\end{enumerate}
Note that in general we do not require $p$ and $q$  to satisfy $p + q = 1$. Here the superscript $\eps$ of $\uoneeps, \utwoeps, \veps$ indicates that the solution to \eqref{eq:01} depends on $\eps>0$.

We consider the system \eqref{eq:01} in $Q_T = \Omega \times (0,T)$, where $\Omega\subset\mathbb R^N$ is an open, bounded domain with a sufficiently smooth boundary $\pa\Omega$ and $T>0$. Moreover, we assume the homogeneous Neumann boundary conditions
\begin{equation} \label{eq:02} \na \uoneeps \cdot \nu =\na \utwoeps \cdot \nu = \na \veps \cdot \nu =  0 \text{ on } \partial \Omega \times (0,T),
\end{equation}
where $\nu$ is the unit normal vector pointing outwards of the domain, and 
\begin{equation} \label{eq:03} \uoneeps(x,0) = u_{1,0}(x), \; \utwoeps(x,0) = u_{2,0}(x) \; \text{and} \;\veps(x,0) = v_{0}(x) \text{ for } x\in \Omega
\end{equation}
where $u_{1,0}, u_{2,0}$ and $v_{0}$ satisfy

\begin{enumerate}[label=(A\theenumi),ref=A\theenumi]
\setcounter{enumi}{1}
\item\label{A2} \begin{itemize}
\item $v_0 \in H^1(\Omega)\cap L^{\infty}(\Omega)$ with $v_0(x) \geq \underline{v} > 0$ for a.e. $x\in \Omega$,\\
\item $0\leq u_{1,0}, u_{2,0}\in L^{2+\delta_0}(\Omega)$ for some $\delta_0>0$.
\end{itemize}
\end{enumerate}

\noindent All parameters are positive constants. %In  particular, it is possible that $a_1 \neq a_2$. %Note that the case $a_1 = a_2$ has been considered in \cite{Ei-2012}. Although the rigorous fast reaction limit was not proved in \cite{Ei-2012}, the analysis for the case $a_1 = a_2$ is in fact considerably simpler than when $a_1 \ne a_2$, see Remark \ref{remark:a1=a2}.

We consider a singular limit of problem \eqref{eq:01}-\eqref{eq:03} when $\eps$ tends to zero and prove that a weak solution $(\uoneeps,\utwoeps,\veps)$ to \eqref{eq:01}-\eqref{eq:03} converges to a limit $(u_1,u_2,v)$ in the sense of Definition~\ref{def:cross} as $\eps \to 0$ where $(u,v)$ for $u = u_1+u_2$ is a weak solution of
\begin{equation} \label{eq:04} \left\{ \begin{aligned}
\partial_t u & =  \Delta \left( d + D \frac{q(v)}{p(v)+q(v)} \right) u + \frac{a_1 p(v) + a_2 q(v)}{p(v) + q(v)} (1 - u)u & \text{ in } Q_T, \\
\partial_t v & =  D_v \Delta v + \alpha u - \beta v & \text{ in } Q_T, \\
 \partial_{\nu} u & = \partial_{\nu} v = 0 & \text{ on } \partial \Omega \times (0,T) \\
u(0) & = u_{1,0} + u_{2,0}, \; v(0) = v_{0} &\text{ in } \Omega.
\end{aligned} \right. \end{equation}

To state our results, we first impose an additional technical assumption.
\begin{enumerate}[label=(A\theenumi),ref=A\theenumi]
\setcounter{enumi}{2}
\item\label{A3} Assume either
\begin{enumerate}%[label=(\arabic), ref=\arabic]
\item\label{A3.1} $N\le 2$;
\item\label{A3.2} or $N \geq 3$ and $\sup_{s>0}(|p(s)| + |q(s)|) < +\infty$;
\item\label{A3.3} or $N\ge 3$ and there exists $r>(N+2)/2$ such that
\begin{equation*}
	C_{\mr,r} < \frac{2d+D}{D}
\end{equation*}
where $C_{\mr,r}$ is the maximal regularity constant in Lemma \ref{maximal_regularity}.
\end{enumerate}
\end{enumerate}

We define the notion of weak solutions to \eqref{eq:04}.
\begin{definition}\label{def:cross}
	Let $T>0$. A pair of functions $(u,v)$ is called a weak solution to \eqref{eq:04} on $(0,T)$ if for any test function $\varphi \in C^{2,1}(\overline{Q}_T)$ with $\varphi(\cdot,T)=0$ and $\nabla \varphi\cdot\nu = 0$ on $\pa\Omega\times(0,T)$, the following integral identities hold
	\begin{equation} \label{eq:w04} \begin{aligned}
		-\iint_{Q_T} u \partial_t \varphi \dx \dt & - \int_{\Omega} (u_{1,0} + u_{2,0}) \varphi(0,x) \dx
		- \iint_{Q_T} \left(d + D \frac{q(v)}{q(v)+p(v)} \right) u \Delta \varphi \dx \dt  \\
		& = \iint_{Q_T}  \frac{a_1 p(v) + a_2 q(v)}{q(v)+p(v)} (1 - u)u   \varphi \dx \dt,
	\end{aligned}\end{equation}
	and
	\begin{equation} \label{eq:w05} \begin{aligned}
		-\iint_{Q_T} v \partial_t \varphi \dx \dt - \int_{\Omega} v_{0} \varphi(0,x) \dx - D_v\iint_{Q_T} v \Delta \varphi \dx \dt = \iint_{Q_T} \left(  \alpha u - \beta v \right) \varphi \dx \dt
	\end{aligned}\end{equation}		
\end{definition}

The main result of this section is the following theorem.
\begin{theorem}\label{thm:01}
Assume \eqref{A1}, \eqref{A2} and \eqref{A3}. Let $T>0$. There exist $0<\delta \le \delta_0$ (with $\delta_0$ is in \eqref{A1}), $u_1, u_2 \in L^{2+\delta}(0,T;L^{2+\delta}(\Omega))$, $v \in L^2(0,T;L^2(\Omega))$ and subsequences (not relabeled) of $(\{\uoneeps, \utwoeps,\veps\}_{\eps > 0}$, which are weak solutions to \eqref{eq:01}, such that
\begin{align*} 
\veps \to v, \quad 
\uoneeps \to u_1, \quad \text{and} \quad  \utwoeps \to u_2 \quad  \text{strongly in } L^2(0,T;L^2(\Omega)),
\end{align*}
as $\eps \to 0$ where 
\begin{equation} \label{eq:21} u_1 = \frac{p(v)}{q(v)+p(v)} (u_1 + u_2) \qquad \text{and} \qquad u_2 = \frac{q(v)}{q(v)+p(v)} (u_1 + u_2)\end{equation}
hold a.e. in $ Q_T$ and $(u, v)$, where $u = u_1+u_2$, is a weak solution to Problem \eqref{eq:04} in the sense of Definition \ref{def:cross}.
\end{theorem}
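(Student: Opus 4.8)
The overall strategy is the standard programme for a fast-reaction limit: derive $\eps$-uniform a priori bounds, extract convergent subsequences, show that the stiff term enforces the algebraic constraint \eqref{eq:21} in the limit, and pass to the limit in the weak formulation. First I would collect the uniform estimates. Adding the first two equations of \eqref{eq:01} cancels the stiff terms and yields the mass-type equation $\partial_t \ueps = \Delta \weps + g^{\eps}$ for $\ueps := \uoneeps+\utwoeps$, $\weps := d\uoneeps+(d+D)\utwoeps$ and $g^{\eps}:=a_1(1-\ueps)\uoneeps+a_2(1-\ueps)\utwoeps$, whose effective diffusion coefficient $\weps/\ueps$ lies in $[d,d+D]$. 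The logistic terms give an $L^\infty(0,T;L^1(\Omega))$ bound, while the improved duality method (via Lemma~\ref{maximal_regularity}) yields $\uoneeps,\utwoeps,\ueps$ bounded in $L^{2+\delta_0}(Q_T)$ uniformly in $\eps$; under \eqref{A3.3} the smallness $C_{\mr,r}<(2d+D)/D$ upgrades this to a uniform $L^r(Q_T)$ bound with $r>(N+2)/2$. Feeding these into the $\veps$-equation and using the regularisation of the heat operator gives a uniform bound of $\veps$ in a parabolic space $W^{2,1}_\sigma(Q_T)$ with $\sigma>1$, together with a uniform $L^\infty(Q_T)$ bound in cases \eqref{A3.1} and \eqref{A3.3} (in case \eqref{A3.2} the boundedness of $p,q$ makes the $L^\infty$ bound on $\veps$ dispensable). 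Finally, comparison in $\partial_t\veps-D_v\Delta\veps+\beta\veps=\alpha\ueps\ge 0$ with $v_0\ge\underline v$ gives the lower bound $\veps\ge\underline v\,e^{-\beta T}>0$, which lets us avoid any lower-bound hypothesis on $p,q$.

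Next I would show that the reaction term $R^{\eps}:=q(\veps)\uoneeps-p(\veps)\utwoeps$ vanishes. After a standard regularisation, an entropy estimate—testing the $\uoneeps$- and $\utwoeps$-equations against $\log(\uoneeps/p(\veps))$ and $\log(\utwoeps/q(\veps))$, discarding the nonnegative Fisher-information terms and controlling the $v$-contributions through the bound \eqref{bound_pq} on $p'/p,q'/q$ and the regularity of $\veps$—yields $\tfrac1\eps\intQT (q(\veps)\uoneeps-p(\veps)\utwoeps)\log\tfrac{q(\veps)\uoneeps}{p(\veps)\utwoeps}\,\dx\dt\le C$. By a convexity inequality of the form $(a-b)\log(a/b)\ge (a-b)^2/(a+b)$ and Cauchy--Schwarz this forces $R^{\eps}\to 0$ in $L^1(Q_T)$; interpolating with the uniform $L^{2+\delta}(Q_T)$ bound on $R^{\eps}$ (valid since $p(\veps),q(\veps),\uoneeps,\utwoeps$ are controlled) upgrades this to $R^{\eps}\to 0$ in $L^2(Q_T)$. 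The useful algebraic observation is that solving $\uoneeps+\utwoeps=\ueps$, $q(\veps)\uoneeps-p(\veps)\utwoeps=R^{\eps}$ gives
\[
\uoneeps=\frac{p(\veps)}{p(\veps)+q(\veps)}\ueps+\frac{R^{\eps}}{p(\veps)+q(\veps)},\qquad \utwoeps=\frac{q(\veps)}{p(\veps)+q(\veps)}\ueps-\frac{R^{\eps}}{p(\veps)+q(\veps)},
\]
where the leading coefficients lie in $[0,1]$ and the correction satisfies $|R^{\eps}/(p(\veps)+q(\veps))|\le\ueps$; hence once $\ueps$ converges strongly and the correction vanishes in $L^2$, strong convergence of $\uoneeps,\utwoeps$ and the constraint \eqref{eq:21} follow at once.

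It remains to obtain strong $L^2$ compactness. For $\veps$ this is routine: the uniform $W^{2,1}_\sigma(Q_T)$ bound and Aubin--Lions give $\veps\to v$ strongly in $L^2(Q_T)$ and a.e., so that $p(\veps)\to p(v)$, $q(\veps)\to q(v)$ and $\mathcal A(\veps)\to\mathcal A(v)$ a.e. and boundedly, where $\mathcal A(s):=d+D\,q(s)/(p(s)+q(s))\in[d,d+D]$. The genuinely delicate point is strong compactness of the total density $\ueps$: the a priori bounds provide \emph{no} uniform control of $\na\uoneeps,\na\utwoeps$—the stiff term $\tfrac1\eps R^{\eps}$ precludes the usual energy estimate on the individual components—so classical Aubin--Lions is unavailable, and the effective coefficient $\weps/\ueps$ depends on the unknown $\utwoeps$. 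To break this circularity I would rewrite $\weps=\mathcal A(\veps)\ueps-D\,R^{\eps}/(p(\veps)+q(\veps))$, so that the mass equation becomes the \emph{linear} parabolic equation $\partial_t\ueps=\Delta\bigl(\mathcal A(\veps)\ueps\bigr)-D\,\Delta\bigl(R^{\eps}/(p(\veps)+q(\veps))\bigr)+g^{\eps}$ whose coefficient $\mathcal A(\veps)$ converges strongly and whose remaining right-hand side is relatively compact because $R^{\eps}\to 0$ in $L^2$. A duality-based compactness lemma for parabolic equations with strongly convergent coefficients in $[d,d+D]$ (proved, like the duality estimate itself, by testing the equation for a difference $\ueps-u^{\eps'}$ against the backward dual problem) then yields $\ueps\to u$ strongly in $L^2(Q_T)$, and the reconstruction above delivers $\uoneeps\to u_1$, $\utwoeps\to u_2$ strongly in $L^2(Q_T)$. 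I expect this compactness step to be the main obstacle of the whole argument.

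Finally, with all strong convergences and $R^{\eps}\to 0$ in hand, I would pass to the limit in the weak formulations. The $\veps$-equation is linear and passes immediately to \eqref{eq:w05}. In the mass equation the diffusion term $\intQT \weps\,\Delta\varphi\,\dx\dt=\intQT\bigl(\mathcal A(\veps)\ueps-D\,R^{\eps}/(p(\veps)+q(\veps))\bigr)\Delta\varphi\,\dx\dt$ converges to $\intQT \mathcal A(v)\,u\,\Delta\varphi\,\dx\dt$ by strong-times-bounded convergence together with $R^{\eps}\to 0$, while the logistic term converges because $\ueps\uoneeps\to u u_1$ and $\ueps\utwoeps\to u u_2$ in $L^1(Q_T)$ (products of $L^2$-strongly convergent, $L^{2+\delta}$-bounded factors); invoking the constraint \eqref{eq:21} this limit equals $\tfrac{a_1p(v)+a_2q(v)}{p(v)+q(v)}(1-u)u$, exactly the right-hand side of \eqref{eq:w04}. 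This identifies $(u,v)$ with $u=u_1+u_2$ as a weak solution of \eqref{eq:04} in the sense of Definition~\ref{def:cross} and completes the proof.
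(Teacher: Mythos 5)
Your overall skeleton (duality bounds in $L^{2+\delta_0}_{x,t}$, heat regularisation and the lower bound for $\veps$, vanishing of the stiff term, algebraic reconstruction of $\uoneeps,\utwoeps$ from $\ueps$ and $R^\eps$, passage to the limit) matches the paper, but the two places where you deviate both contain genuine gaps. First, your premise that ``the a priori bounds provide no uniform control of $\na\uoneeps,\na\utwoeps$, so classical Aubin--Lions is unavailable'' is exactly what the paper disproves: part (iii) of Proposition \ref{main-estimates} establishes $\|\na\uoneeps\|_{L^{1+\xi}_{x,t}}+\|\na\utwoeps\|_{L^{1+\xi}_{x,t}}\le C_T$ uniformly in $\eps$ by testing with the \emph{concave} quantities $[q(\veps)\uoneeps]^{\theta-1}$, $[p(\veps)\utwoeps]^{\theta-1}$, $\theta\in(0,\min\{1,\delta_0\})$, for which the stiff term has a favourable sign; these bounds, together with a bound on $\partial_t(\uoneeps+\utwoeps)$ in $L^{\varkappa'}_t(W^{1,\varkappa}_x)'$ obtained from the weak formulation of the summed equation, are precisely what make Aubin--Lions applicable to $\ueps$. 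Your own entropy estimate produces the corresponding Fisher-information terms, which you discard; had you kept them you would recover the gradient bounds and would not need the unproved duality-compactness lemma. As it stands, that lemma (strong $L^2$ compactness for $\partial_t\ueps=\Delta(\mathcal A(\veps)\ueps)+\dots$ with merely a.e.-convergent coefficients) is a nontrivial result whose proof you only gesture at, so the central compactness step of your argument is not justified.

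Second, the logarithmic entropy itself does not close in all the cases covered by \eqref{A3}. After Young's inequality the cross term $d\intQT\na\uoneeps\cdot\frac{p'(\veps)}{p(\veps)}\na\veps\dx\dt$ leaves you with $\intQT \uoneeps|\na\veps|^2\dx\dt$ at \emph{full} power of $\uoneeps$; with only $\uoneeps\in L^{2+\delta_0}_{x,t}$ and $\na\veps\in L^{r_2}_{x,t}$ from Lemma \ref{lem:heat_gradient}, H\"older requires $r_2\ge 2(2+\delta_0)/(1+\delta_0)$, which fails for $N\ge 3$ when $\delta_0$ is small --- i.e.\ precisely in case \eqref{A3.2}, where no higher integrability of $\uoneeps,\utwoeps$ is available. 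The paper's choice of a small concave power $\theta<\delta_0$ is made exactly to replace this term by $\intQT|\uoneeps|^{\theta}|\na\veps|^2\dx\dt\le \|\na\veps\|^2_{L^{2+\delta_0}_{x,t}}\|\uoneeps\|^{\theta}_{L^{\theta(2+\delta_0)/\delta_0}_{x,t}}$, which is controlled since $\theta(2+\delta_0)/\delta_0\le 2+\delta_0$. The same issue propagates to your claim that $R^\eps\to 0$ in $L^2_{x,t}$ (the paper only proves, and only needs, convergence in $L^1_{x,t}$ and a.e., cf.\ Lemma \ref{lem:new_estimate_2}, since the a.e.\ convergence of $\uoneeps$ is upgraded to strong $L^2$ convergence by the uniform $L^{2+\delta_0}_{x,t}$ bound). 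The remaining ingredients of your proposal --- the reconstruction identities, the final passage to the limit in the weak formulation --- agree with the paper and are fine.
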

\begin{remark} 
 The existence of global weak solution $(\uoneeps, \utwoeps, \veps)$ to \eqref{eq:01} is given in Proposition \ref{prop:01}. Note that for this result alone, the assumption \eqref{A3} can be considerably weakened.
\end{remark}

%\textcolor{blue}{comment: We don't need $q$ and $p$ to be bounded from below thanks to fact that $v$ is bounded from below for any finite time. This is an advantage of the equation for $v$. On the other hand, we don't have $L^{\infty}$ bounds for $v$ precisely because of the equation.}

%\medskip
%Remark: Our result can be also applied to the model by Ei et al. \cite{Ei-2012}.

In the following, we start to show the existence of a global weak solution to the system \eqref{eq:01} for fixed $\eps>0$. The crucial estimates, which are uniform in $\eps>0$, are proved in the next subsection. Finally, the limit as $\eps\to 0$ is performed in the last subsection. For simplicity, we will use the following notation for the rest of this paper.
\begin{itemize}
	\item For fixed $T>0$ and $r, s\in [1,\infty]$,
	\begin{equation*}
		L^r_{t}L^s_x:= L^r(0,T;L^s(\Omega)), \; L^r_{x,t} := L^r(0,T;L^r(\Omega)), \; L^r_tW^{1,s}_x := L^r(0,T;W^{1,s}(\Omega)).
	\end{equation*}
	\item For any constant $r\in [1,\infty]$, we denote by $r'$ its H\"older conjugate exponent, i.e.
	\begin{equation*}
		r' = \frac{r}{r-1},
	\end{equation*}
	with the convention $r' = \infty$ when $r = 1$, and $r' = 1$ when $r = \infty$.
\end{itemize}

\subsection{The reaction-diffusion system for $\eps>0$}

We start with the definition of a weak solution to \eqref{eq:01}-\eqref{eq:03}.

\begin{definition}\label{def_weak}
For each $\eps >0$, a triple of non-negative functions $(u_1^{\eps},u_2^{\eps},v^{\eps})$ is called a weak solution on $(0,T)$, $T>0$, to system \eqref{eq:01}-\eqref{eq:03} if  $u_1^{\eps},u_2^{\eps},v^{\eps} \in L^{2}_{x,t}$ for all $T>0$ and, for any test function $\varphi \in C^{2,1}(\overline{Q_T})$ with $\varphi(x,T) = 0$ in $\Omega$, and $\pa_{\nu}\varphi = 0$ on $\pa\Omega\times(0,T)$,
\begin{equation} \label{eq:w01} \begin{aligned}
-\iint_{Q_T} \uoneeps \partial_t \varphi \dx \dt & - \int_{\Omega} u_{1,0} \varphi(0,x) \dx - d \iint_{Q_T} \uoneeps \Delta \varphi \dx \dt \\
& = a_1 \iint_{Q_T} (1 - \uoneeps - \utwoeps)\uoneeps \varphi \dx \dt - \frac{1}{\eps} \iint_{Q_T} (q(\veps) \uoneeps - p(\veps)\utwoeps) \varphi \dx \dt,
\end{aligned}\end{equation}
\begin{equation} \label{eq:w02} \begin{aligned}
-\iint_{Q_T} \utwoeps \partial_t \varphi \dx \dt & - \int_{\Omega} u_{2,0} \varphi(0,x) \dx - (d+D) \iint_{Q_T} \utwoeps \Delta \varphi \dx \dt \\
& = a_2\iint_{Q_T} (1 - \uoneeps - \utwoeps)\utwoeps \varphi \dx \dt + \frac{1}{\eps} \iint_{Q_T} (q(\veps) \uoneeps - p(\veps)\utwoeps) \varphi \dx \dt,
\end{aligned}\end{equation}
and
\begin{equation} \label{eq:w03} \begin{aligned}
-\iint_{Q_T} \veps \partial_t \varphi \dx \dt - \int_{\Omega} v_{0} \varphi(0,x) \dx - D_v\iint_{Q_T} \veps \Delta \varphi \dx \dt = \iint_{Q_T} \left(  \alpha (\uoneeps + \utwoeps) - \beta \veps \right) \varphi \dx \dt.
\end{aligned}\end{equation}
\end{definition}

\begin{proposition}[Weak solutions for fixed $\eps>0$]\label{prop:01}
	Assume \eqref{A1}, \eqref{A2} and either
	\begin{enumerate}[label={\normalfont(A\theenumi')},ref=A\theenumi']
		\setcounter{enumi}{2}
		\item\label{A3'} Assume either \eqref{A3.1} or \eqref{A3.3} or
		\begin{equation}\label{A3'-b}
		N\ge 3 \; \text{ and } \; |p(z)| + |q(z)| \leq C\bra{1+|v|^{1+\frac{4}{N-2}}} \; \forall z\in \mathbb R_+.
		\end{equation}
	\end{enumerate}
%	\begin{itemize}
%		\item $N\leq 2$;
%		\item or $N\ge 3 \; \text{ and } \; |p(z)| + |q(z)| \leq C\bra{1+|v|^{1+\frac{4}{N-2}}} \; \forall z\in \mathbb R_+$;
%		\item or $N\geq 3$ and there exists $r>(N+2)/2$ such that
%		\begin{equation*}
%			C_{\mr,r} < \frac{2d+D}{D}
%		\end{equation*}
%		where $C_{\mr,r}$ is the maximal regularity constant in Lemma \ref{maximal_regularity}.
%	\end{itemize}
	Then there exists a global non-negative weak solution to \eqref{eq:01}-\eqref{eq:03} in the sense of Definition \ref{def_weak}.
\end{proposition}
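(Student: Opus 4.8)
The plan is to obtain the weak solution as a limit of smooth solutions of a truncated problem, transferring all the difficulty to a chain of a priori bounds that are uniform in the truncation parameter~$n$ (they may, and will, depend on the fixed $\eps>0$). First I would regularise \eqref{eq:01}: replace $p,q$ by $p(T_n(\cdot)),\,q(T_n(\cdot))$ and the logistic terms by truncations at level $n$, where $T_n$ is a smooth cut-off, while keeping the switching term $\tfrac1\eps\bra{q(v^n)u_1^n-p(v^n)u_2^n}$ with \emph{opposite signs} in the two equations and preserving the sign structure of the logistic terms. The truncated system has a globally Lipschitz right-hand side, so a standard Galerkin or semigroup-plus-fixed-point argument produces a unique global solution $(u_1^n,u_2^n,v^n)$, subject to \eqref{eq:02}--\eqref{eq:03}.

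The uniform estimates would be carried out in the following order. \emph{(i) Non-negativity}: by quasi-positivity — on $\{u_1^n=0\}$ the reaction is $+\tfrac1\eps p(v^n)u_2^n\ge 0$, symmetrically for $u_2^n$, and the $v^n$-source is non-negative — testing with negative parts yields $u_1^n,u_2^n,v^n\ge 0$, which lets me discard the sign-truncation in $p,q$. \emph{(ii) Lower bound on $v$}: dropping the non-negative source gives $\pa_t v^n\ge D_v\Delta v^n-\beta v^n$, so comparison yields $v^n\ge \underline v\,e^{-\beta t}>0$; this keeps the arguments of $p,q$ strictly positive and removes any need for lower bounds on $p,q$. \emph{(iii) Duality}: adding the first two equations and writing $d\,u_1^n+(d+D)u_2^n=a^n(u_1^n+u_2^n)$ with $a^n\in[d,d+D]$, the switching terms cancel and the logistic terms contribute a dissipative quadratic part together with a term linear in $u_1^n+u_2^n$ that is harmless for the argument; the (improved) duality method, with reference coefficient $a_0=\tfrac{2d+D}{2}$ and the constant $C_{\mr,r}$ of Lemma~\ref{maximal_regularity}, then gives $u_1^n,u_2^n\in L^{2+\delta}(Q_T)$ for some $0<\delta\le\delta_0$, uniformly in $n$.

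The regularity of $v^n$ is the heart of the matter, and it is where \eqref{A3'} splits. Feeding the bound on $u_1^n+u_2^n$ into the $v^n$-equation and using the smoothing of the heat operator: in case \eqref{A3.1} ($N\le 2$) the exponent $2+\delta$ already exceeds $(N+2)/2$, hence $v^n\in L^\infty(Q_T)$; in case \eqref{A3.3} the smallness $C_{\mr,r}<\tfrac{2d+D}{D}$ lets the improved duality push the integrability of $u_1^n+u_2^n$ above $(N+2)/2$, again giving $v^n\in L^\infty$; and in case \eqref{A3'-b} one instead combines the subcritical growth of $p,q$ with the best available $L^s$-bound on $v^n$ so that $q(v^n)u_1^n$ and $p(v^n)u_2^n$ stay uniformly integrable. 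With $p(v^n),q(v^n)$ thus controlled, a routine energy estimate closes: testing the $u_i^n$-equation with $u_i^n$, the cubic logistic contributions $-a_i\!\int (u_i^n)^3$ and $-a_i\!\int (u_i^n)^2u_j^n$ carry a favourable sign and are dropped, and the remaining terms are dominated by the $L^2$ and $L^\infty$ (resp.\ $L^s$) bounds, yielding $u_i^n\in L^\infty_tL^2_x\cap L^2_tH^1_x$ uniformly in $n$ (note $u_{i,0}\in L^{2+\delta_0}\subset L^2$).

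Finally I would pass to the limit $n\to\infty$. The $L^2_tH^1_x$-bound, together with a bound on $\pa_t u_i^n$ in a negative-order space read off from the equation, gives by the Aubin--Lions lemma strong convergence $u_i^n\to u_i$ in $L^2(Q_T)$; the $v^n$-equation likewise yields $v^n\to v$ strongly, so $p(v^n)\to p(v)$ and $q(v^n)\to q(v)$ a.e.\ and in the relevant $L^s$. Strong convergence of the $u_i^n$ is exactly what is needed to pass to the limit in the quadratic logistic terms and in the products $q(v^n)u_1^n,\,p(v^n)u_2^n$, recovering the weak identities of Definition~\ref{def_weak}. I expect the decisive obstacle to be steps (iii)--(iv): extracting enough integrability of $v^n$ from a source that is only as regular as $u_1^n+u_2^n$. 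Unlike the comparison-principle bound available in related models, here the control of $v^n$ must be squeezed out of the improved duality and the heat regularisation, and it is precisely this difficulty that forces the trichotomy in~\eqref{A3'}.
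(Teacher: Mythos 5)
Your overall architecture (truncated approximation, non-negativity by quasi-positivity, lower bound on $v$ by comparison, improved duality for $L^{2+\kappa}_{x,t}$ bounds on $u_1,u_2$, heat regularisation for $v$, compactness, limit) matches the paper's proof in Appendix \ref{appB} up to the compactness step, but the compactness step itself contains a genuine gap. You propose to close the estimates by testing the $u_i$-equations with $u_i$ to get $L^\infty_tL^2_x\cap L^2_tH^1_x$ bounds. Testing the $u_1$-equation with $u_1$ produces the cross term $\frac{1}{\eps}\iint_{Q_T} p(v)\,u_1u_2\,\dx\dt$ with a \emph{positive} sign (and symmetrically $\frac{1}{\eps}\iint_{Q_T} q(v)\,u_1u_2\,\dx\dt$ from the $u_2$-equation); these do not cancel when the two identities are added. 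In the cases \eqref{A3.1} and \eqref{A3.3} you have $v\in L^\infty_{x,t}$, hence $p(v),q(v)\in L^\infty_{x,t}$, and the estimate closes by Gronwall. But in the case \eqref{A3'-b} the only available control is $p(v),q(v)\in L^\lambda_{x,t}$ with $\lambda=\frac{(N-2)(2+\kappa)}{N-2(1+\kappa)}$, which is close to $2+\kappa$ when the duality gain $\kappa$ is small, while H\"older applied to $\iint p(v)u_1u_2$ with $u_1,u_2\in L^{2+\kappa}_{x,t}$ requires $\lambda\ge (2+\kappa)/\kappa$. The exponents do not add up for small $\kappa$ (which is not at your disposal), so the energy estimate does not close precisely in the case that Proposition \ref{prop:01} is designed to cover beyond \eqref{A3}. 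The paper avoids this entirely: it never derives an $H^1$ bound, but instead uses the $L^{2+\kappa}$ and $L^\lambda$ bounds to place the \emph{entire right-hand side} of each equation in $L^{1+\kappa/2}_{x,t}$, and then obtains compactness of $u_1^\delta,u_2^\delta$ from maximal regularity of the heat operator together with Aubin--Lions in $L^{1+\kappa/2}$. You should replace your energy step by this argument (or restrict the energy route to the bounded-$v$ cases).

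A second, more minor, point concerns the approximate system. The paper truncates the whole switching rate, $R\mapsto R\,[1+\delta|R|]^{-1}$, so that the approximate right-hand sides are quasi-positive, locally Lipschitz and \emph{bounded above by linear functions}, which is exactly the hypothesis of the classical global existence theory it invokes. Your truncation (cutting off the argument of $p,q$ and the logistic terms, but keeping the products $q(T_n(v))u_1$ and $p(T_n(v))u_2$ intact) does not yield a globally Lipschitz right-hand side as claimed --- these products are only locally Lipschitz --- so global solvability of the approximate system needs an additional argument (e.g.\ comparison with a cooperative linear system using the boundedness of $p\circ T_n$, $q\circ T_n$, or truncating the $u_i$ factors as well). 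This is fixable, but as written the ``standard Galerkin or semigroup-plus-fixed-point'' step is not standard.
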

\begin{remark}
	The global existence of a weak solution to \eqref{eq:01} requires the assumption \eqref{A3'-b} which is slightly weaker than \eqref{A3.2} in the case $N\geq 3$ and $p, q$ are bounded by polynomials of certain orders.
\end{remark}
\begin{proof}[Proof of Proposition \ref{prop:01}]
	The proof of this Proposition follows from recent advances in reaction-diffusion systems with control of mass, see e.g. \cite{canizo2014improved}. For the sake of completeness, we provide a full proof in Appendix \ref{appB}.
\end{proof}
%\begin{proof}
%Standard theorems of existence for reaction-diffusion equations [Add ref].
%\end{proof}
%\textcolor{red}{If $\veps \in L^2(Q_T)$, what does it imply for $q(\veps)$ and $p(\veps)$?}

\subsection{Uniform-in-$\eps$ a-priori estimates}
The following proposition contains the main a-priori estimates of the system \eqref{eq:01} which are uniform in $\eps>0$. These estimates are essential in studying the limit $\eps\to 0$. We emphasize that all constants in this subsection are \textit{independent of $\eps>0$} unless stated otherwise.
\begin{proposition}\label{main-estimates}
	Assume \eqref{A1}, \eqref{A2} and \eqref{A3'}, which implies that \eqref{eq:01} has a global weak solution.
	\begin{itemize}
		\item[(i)] There exists a constant $C_T$ depending on $T>0$,
	\begin{equation*}
		\|u_i^{\eps}\|_{L^{\infty}_tL^1_x} + \|u_i^{\eps}\|_{L^{2+\delta_0}_{x,t}} \leq C_T, \quad i = 1,2.
	\end{equation*}
	\item[(ii)] There exists $\delta>0$ such that
	\begin{equation}\label{estimata-v-2+delta}
		\|\veps\|_{L^{2+\delta}_{x,t}} + \|\partial_t\veps\|_{L^{2+\delta}_{x,t}} + \|\na \veps\|_{L^{2+\delta}_{x,t}} \leq C_T.
	\end{equation}
	Assume additionally either \eqref{A3.1} or \eqref{A3.3}. Then we have
	\begin{equation}\label{estimate-v-infty}
		\|\veps\|_{L^{\infty}_{x,t}} \leq C_T.
	\end{equation}
	\item[(iii)] Assume additionally \eqref{A3}. Then there exists $\xi>0$ such that
	\begin{equation}\label{u-gradient}
		\|\na \uoneeps\|_{L^{1+\xi}_{x,t}} + \|\na \utwoeps\|_{L^{1+\xi}_{x,t}} \leq C_T.
	\end{equation}
\end{itemize}
\end{proposition}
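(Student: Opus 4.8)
The plan is to establish the three estimates in the stated order, since each rests on the previous one. For (i), I would first exploit the mass-control structure. Adding the equations for $\uoneeps$ and $\utwoeps$ in \eqref{eq:01} cancels the stiff terms $\pm\frac1\eps(q(\veps)\uoneeps-p(\veps)\utwoeps)$, so that $M^\eps:=\uoneeps+\utwoeps$ solves $\pa_t M^\eps=\Delta(d\uoneeps+(d+D)\utwoeps)+a_1(1-M^\eps)\uoneeps+a_2(1-M^\eps)\utwoeps$. Integrating over $\Omega$, the diffusion vanishes by the Neumann condition \eqref{eq:02}, and the growth is bounded above by $\max\{a_1,a_2\}M^\eps$ (its quadratic part being nonpositive); Gr\"onwall then yields the $L^\infty_tL^1_x$ bound. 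For the $\LQ{2+\delta_0}$ bound I would apply the improved duality method to the same equation, written as $\pa_t M^\eps=\Delta(a^\eps M^\eps)+g^\eps$ with effective diffusion $a^\eps=(d\uoneeps+(d+D)\utwoeps)/M^\eps\in[d,d+D]$ and $g^\eps$ the sign-favourable, at most quadratic, growth. Because the maximal-regularity constant satisfies $C_{\mr,2}=1<(2d+D)/D$ (Lemma \ref{maximal_regularity}), continuity in the exponent opens an interval of $p=2+\delta_0>2$ for which the duality estimate closes; this is the argument of \cite{canizo2014improved}, and it bounds $M^\eps$, hence each nonnegative $\uoneeps,\utwoeps$, in $\LQ{2+\delta_0}$.

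For (ii), I would treat the $\veps$-equation as the linear heat equation $\pa_t\veps-D_v\Delta\veps+\beta\veps=\alpha M^\eps$ with source $M^\eps\in\LQ{2+\delta_0}$ and data $v_0\in H^1(\Omega)\cap L^\infty(\Omega)$. Maximal parabolic regularity (absorbing the zeroth-order term $\beta\veps$ into the operator, or via a short bootstrap from the energy bound $\veps\in L^\infty_tL^2_x\cap L^2_tH^1_x$) places $\veps$ in $W^{2,1}_{2+\delta}$, which gives \eqref{estimata-v-2+delta}. For the $L^\infty$ bound I would use the heat-operator regularisation: under \eqref{A3.1} one has $2+\delta_0>(N+2)/2$ automatically, while under \eqref{A3.3} the stronger duality condition $C_{\mr,r}<(2d+D)/D$ upgrades part (i) to $M^\eps\in\LQ{r}$ with $r>(N+2)/2$; in either case the parabolic embedding $W^{2,1}_r(Q_T)\hookrightarrow L^\infty(Q_T)$ delivers \eqref{estimate-v-infty}.

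For (iii), the key is an entropy (energy) estimate that tames the stiff reaction. I would test \eqref{eq:01} with the variation of the free energy $\mathcal{H}=\intO\bra{\uoneeps\log\uoneeps+\utwoeps\log\utwoeps-\uoneeps-\utwoeps+\uoneeps\log q(\veps)+\utwoeps\log p(\veps)}\dx$, whose $\uoneeps,\utwoeps$-derivatives are $\log(q(\veps)\uoneeps)$ and $\log(p(\veps)\utwoeps)$; this is designed precisely so that the reaction contribution becomes the dissipative term $-\frac1\eps\intO(q(\veps)\uoneeps-p(\veps)\utwoeps)\log\frac{q(\veps)\uoneeps}{p(\veps)\utwoeps}\dx\le0$. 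The diffusion then produces the good dissipation $4d\intO|\na\sqrt{\uoneeps}|^2+4(d+D)\intO|\na\sqrt{\utwoeps}|^2$, and the $\veps$-dependence of $p,q$ generates coupling terms in which assumption \eqref{A1} is exactly what controls $\na\log q(\veps)=\frac{q'(\veps)}{q(\veps)}\na\veps$ and the analogous $\pa_t\veps$-term; here the pointwise lower bound $\veps\ge\underline v\,e^{-\beta T}>0$ (from $v_0\ge\underline v$ and the nonnegative source, by comparison) keeps $\mathcal H$ finite. After a Young splitting, the couplings reduce to $\intQT\uoneeps|\na\veps|^2$, $\intQT\uoneeps|\pa_t\veps|$ and their $\utwoeps$-analogues.

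The hard part will be closing these coupling terms uniformly in $\eps$. The time-derivative term is controlled by H\"older from $\uoneeps\in\LQ{2+\delta_0}$ and $\pa_t\veps\in\LQ{2+\delta}$, but $\intQT\uoneeps|\na\veps|^2$ requires $\na\veps$ roughly in $L^4$, which \eqref{estimata-v-2+delta} alone does not supply. This is precisely where \eqref{A3} enters: I would upgrade the integrability of $\na\veps$ through maximal regularity and the parabolic Sobolev embedding applied to the $\veps$-equation, the assumption \eqref{A3} guaranteeing — in each of its three cases, via the low dimension under \eqref{A3.1}, the $L^\infty$-bound and higher $L^r$-integrability of $M^\eps$ under \eqref{A3.3}, and the boundedness of $p,q$ under \eqref{A3.2} — that the available $\veps$-regularity makes these couplings summable. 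Once $\na\sqrt{\uoneeps},\na\sqrt{\utwoeps}\in\LQ{2}$ is secured, I would interpolate: writing $|\na\uoneeps|^{1+\xi}=(|\na\uoneeps|^2/\uoneeps)^{(1+\xi)/2}(\uoneeps)^{(1+\xi)/2}$ and applying H\"older against the $\LQ{2+\delta_0}$ bound closes \eqref{u-gradient} for any $\xi>0$ small enough that $(1+\xi)/(1-\xi)\le 2+\delta_0$.
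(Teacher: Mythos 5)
Your treatment of parts (i) and (ii) follows the paper's route essentially verbatim: mass control plus Gr\"onwall for the $L^\infty_tL^1_x$ bound, the improved duality method of \cite{canizo2014improved} (exploiting $C_{\mr,2}\le 1$ and continuity of $r\mapsto C_{\mr,r}$) for the $\LQ{2+\delta_0}$ bound, and heat regularisation of the $\veps$-equation for \eqref{estimata-v-2+delta} and \eqref{estimate-v-infty}, with the duality argument rerun at exponent $r>(N+2)/2$ in case \eqref{A3.3}. No issues there.

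Part (iii) contains a genuine gap. You test with the logarithmic entropy variables $\log(q(\veps)\uoneeps)$, $\log(p(\veps)\utwoeps)$, which produces the coupling term $\intQT \uoneeps|\na\veps|^2\dx\dt$ with weight $\uoneeps$ to the \emph{first} power. By H\"older against $\uoneeps\in\LQ{2+\delta_0}$ this requires $\na\veps\in\LQ{2(2+\delta_0)/(1+\delta_0)}$, i.e.\ roughly $L^4$ for small $\delta_0$. You assert that \eqref{A3} supplies this in all three cases, but under \eqref{A3.2} it does not: boundedness of $p$ and $q$ improves nothing about $\na\veps$, and the heat regularisation of part (ii) only yields $\na\veps\in\LQ{r_2}$ with $r_2=(N+2)(2+\delta_0)/(N-\delta_0)$, which for $N=3$ is about $10/3<4$ when $\delta_0$ is small (one checks the required inequality reduces to $\delta_0\ge 1/7$, which cannot be guaranteed since $\delta_0$ comes out of the duality lemma and may be arbitrarily small); for $N\ge 4$ it is worse. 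The paper avoids this precisely by using a \emph{concave power} test function $[q(\veps)\uoneeps]^{\theta-1}$ with $0<\theta<\min\{1,\delta_0\}$ instead of the logarithm: the reaction term is still signed by the monotonicity inequality $-(x-y)(x^{\theta-1}-y^{\theta-1})\ge 0$, the dissipation becomes $\intQT q(\veps)^{\theta-1}|\na(\uoneeps)^{\theta/2}|^2$, and the troublesome coupling is downgraded to $\intQT|\uoneeps|^{\theta}|\na\veps|^2$, which H\"older controls using only $\na\veps\in\LQ{2+\delta_0}$ and $\uoneeps\in\LQ{\theta(2+\delta_0)/\delta_0}\subset\LQ{2+\delta_0}$ (here $\theta<\delta_0$ is essential). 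The role of \eqref{A3} in the paper is then only to bound $p(\veps)$, $q(\veps)$ from above so that the weights $q(\veps)^{\theta-1}$, $p(\veps)^{\theta-1}$ are bounded below, after which your final interpolation step (choosing $\xi$ with $(1-\theta)(1+\xi)/(1-\xi)=2+\delta_0$) goes through as you describe. So the small concave exponent is not a cosmetic variant of the entropy method; it is what makes the argument close under \eqref{A3.2}, and your proof as written does not cover that case.
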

The rest of this subsection is devoted to prove Proposition \ref{main-estimates}. The three parts of this proposition are proved in the following sub-subsection respectively, where the first one is shown by using an improved duality method, the second one is obtained from the regularisation of the heat operator, while the last one is proved by using a concave energy function. These parts will be proved consecutively, that means the latter part uses the results of previous part(s) in its proof.

\subsubsection{Improved duality method}
We start with some estimates that can be immediately obtained from the structure of the system.
\begin{lemma}\label{lem:L1_bound}
 	There exists a positive constant $C_T$ independent of $\eps$ such that
\begin{equation}\label{LInfL1}
\norm{\uoneeps}_{L^\infty_tL^1_x} + \norm{\utwoeps}_{L^\infty_tL^1_x} \le C_T 
\end{equation}
and 
\begin{equation}\label{L2}
\norm{\uoneeps}_{L^{2}_{x,t}} + \norm{\utwoeps}_{L^{2}_{x,t}}  \le C_T. 
\end{equation}
% \bao{We need one lemma here to show that $\uoneeps, \utwoeps$ are bounded in $L^\infty(0,T;L^1(\Omega))$ uniformly in $\eps>0$.}
\end{lemma}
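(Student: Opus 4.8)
The plan is to exploit the cancellation of the stiff $1/\eps$ exchange terms upon adding the first two equations of \eqref{eq:01}. Writing $u = \uoneeps + \utwoeps$, the sum reads $\partial_t u = \Delta\bra{d\uoneeps + (d+D)\utwoeps} + (1-u)(a_1\uoneeps + a_2\utwoeps)$, which no longer contains $\eps$; this is precisely why the resulting bounds are uniform in $\eps$. Throughout I use that $\uoneeps,\utwoeps\ge 0$, so that $\uoneeps\le u$ and $\utwoeps\le u$ pointwise, which reduces both claims to estimating $u$.

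For \eqref{LInfL1} I would integrate the $u$-equation over $\Omega$ and discard the diffusion by the homogeneous Neumann conditions \eqref{eq:02}. Bounding the reaction from above by $a_1\uoneeps + a_2\utwoeps \le \max\{a_1,a_2\}\,u$ (the quadratic part $-u(a_1\uoneeps+a_2\utwoeps)$ being nonpositive) gives $\frac{d}{dt}\int_\Omega u\dx \le \max\{a_1,a_2\}\int_\Omega u\dx$. Gronwall's lemma then yields $\norm{u(t)}_{L^1_x}\le e^{\max\{a_1,a_2\}T}\norm{u_{1,0}+u_{2,0}}_{L^1_x}$ for $t\in[0,T]$, and \eqref{LInfL1} follows with $C_T$ independent of $\eps$.

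For \eqref{L2} I would use the $L^2$ duality method. Setting $a^\eps := (d\uoneeps+(d+D)\utwoeps)/u$ (and $a^\eps:=d$ where $u=0$), the total satisfies $\partial_t u - \Delta(a^\eps u) = f^\eps$ with the crucial \emph{uniform} two-sided bound $d\le a^\eps\le d+D$, a direct consequence of nonnegativity. To absorb the reaction I set $w := e^{-\max\{a_1,a_2\}t}u \ge 0$; using $f^\eps \le \max\{a_1,a_2\}\,u$ one checks that $\partial_t w - \Delta(a^\eps w)\le 0$ with $w(0)=u_{1,0}+u_{2,0}\in L^2(\Omega)$. Given $0\le\theta\in L^2(Q_T)$, I solve the backward dual problem $-\partial_t\varphi - a^\eps\Delta\varphi = \theta$, $\partial_\nu\varphi=0$, $\varphi(T)=0$, whose solution satisfies $\varphi\ge 0$ by the maximum principle. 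Testing the differential inequality for $w$ against $\varphi$ and integrating by parts in space and time gives $\iint_{Q_T} w\,\theta \le \int_\Omega w(0)\varphi(0)\dx$. A dual energy estimate, obtained by multiplying the dual equation by $-\Delta\varphi$ and using only $d\le a^\eps\le d+D$, controls $\norm{\Delta\varphi}_{L^2_{x,t}}$ and hence, via $\varphi(0)=\int_0^T(a^\eps\Delta\varphi+\theta)\dt$, bounds $\norm{\varphi(0)}_{L^2_x}$ by $C(d,D,T)\norm{\theta}_{L^2_{x,t}}$. Taking the supremum over $\theta\ge 0$ with $\norm{\theta}_{L^2_{x,t}}\le 1$ identifies $\norm{w}_{L^2_{x,t}}\le C\norm{w(0)}_{L^2_x}$, whence $\norm{u}_{L^2_{x,t}}\le C_T$ and \eqref{L2} follows from $\uoneeps,\utwoeps\le u$.

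I expect the main technical point to be the dual-problem estimate. Since $a^\eps$ is merely measurable, being only pinched between $d$ and $d+D$ with no available regularity, the energy estimate must be arranged so that the resulting constant depends only on the ellipticity bounds and on $T,\Omega$, never on $\eps$ nor on any smoothness of $a^\eps$. It is exactly here that the uniform two-sided bound $d\le a^\eps\le d+D$, inherited from the nonnegativity of $\uoneeps,\utwoeps$, is indispensable, and it is what makes the whole estimate $\eps$-independent.
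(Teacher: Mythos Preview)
Your argument is correct. For \eqref{LInfL1} you proceed essentially as the paper does: add the equations to cancel the $1/\eps$ terms, integrate in space, discard the diffusion, and apply Gronwall.

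For \eqref{L2}, however, you take a genuinely different route. The paper's argument is considerably shorter: from the same summed equation, integrating over $\Omega$ and keeping the negative quadratic part of the logistic reaction yields
\[
\int_{\Omega}(\uoneeps(t)+\utwoeps(t))\dx + a_1\!\int_0^t\!\!\int_{\Omega}(\uoneeps)^2\dx\ds + a_2\!\int_0^t\!\!\int_{\Omega}(\utwoeps)^2\dx\ds \le \int_{\Omega}(u_{1,0}+u_{2,0})\dx + \max\{a_1,a_2\}\!\int_0^t\!\!\int_{\Omega}(\uoneeps+\utwoeps)\dx\ds,
\]
and both \eqref{LInfL1} and \eqref{L2} follow at once from the elementary inequality $x\le \kappa x^2 + C_\kappa$. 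You instead discard the logistic damping (bounding $f^\eps\le \max\{a_1,a_2\}u$) and then recover the $L^2$ bound via the $L^2$ duality method with a backward dual problem having measurable coefficient $a^\eps\in[d,d+D]$. This is correct, and your energy estimate for the dual (multiplying by $-\Delta\varphi$) indeed depends only on the ellipticity bounds and on $T$, so the constant is $\eps$-independent. What the paper's approach buys is brevity, at the price of requiring $a_1,a_2>0$ (which the paper assumes). Your approach buys robustness: it works even without the logistic damping, and it is in fact the same mechanism the paper invokes later (Proposition~\ref{main-estimates}(i)) to obtain the sharper $L^{2+\delta_0}_{x,t}$ bound, only there with a constant-coefficient dual and maximal regularity.
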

\begin{proof} We obtain from \eqref{eq:01} and non-negativity of $\uoneeps$ and $\utwoeps$ that
\begin{gather*} \int_{\Omega}(\uoneeps(t)+\utwoeps(t)) \dx + a_1 \int_0^t \int_{\Omega} (\uoneeps)^2 \dx\ds + a_2 \int_0^t \int_{\Omega} (\utwoeps)^2 \dx\ds \\
 \le \int_{\Omega}(u_{1,0}+u_{2,0}) \dx + \max\{a_1,a_2\} \int_0^t \int_{\Omega} (\uoneeps(s)+\utwoeps(s)) \dx \ds
\end{gather*}
for $0 < t \le T$. We deduce \eqref{LInfL1} and \eqref{L2} from the nequality $x \leq \kappa x^2 + \frac{1}{4\kappa^2}$ for any $\kappa >0$.
\end{proof}

To get the estimate in $L^{2+\delta_0}_{x,t}$ we use an improved duality method which was first proved in \cite{canizo2014improved} and later in \cite{Einav-2020}. This improvement looks marginal, but turns out to be crucial in our analysis. We first need the following lemma.

\begin{lemma}\label{maximal_regularity}
Let $1<r<\infty$ and $m>0$. Let $\theta\in L^r_{x,t}$ and $\phi$ be the solution to
\begin{equation}\label{dual}
\begin{cases}
	\partial_t \phi + m\Delta \phi = -\theta, &x\in\Omega, \; t\in (0,T),\\
	\na \phi(x,t)\cdot\nu = 0, &x\in\partial\Omega, \; t\in (0,T),\\
	\phi(x,T) = 0, &x\in\Omega.
\end{cases}
\end{equation}
Then there exists a constant $C_{\mr, r}>0$ such that 
\begin{equation}\label{max}
	\|\Delta \phi\|_{L^r_{x,t}} \leq \dfrac{C_{\mr,r}}{m}\|\theta\|_{L^r_{x,t}}.
\end{equation}
In particular, for any positive constants $\omega_1, \omega_2$, there exists $r^*<2$ depending on $\omega_1, \omega_2$ such that
\begin{equation}\label{pstar}
	\frac{|\omega_1-\omega_2|}{\omega_1+\omega_2}C_{\mr, r^*} < 1.
\end{equation}
\end{lemma}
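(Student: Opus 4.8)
The plan is to treat the two assertions separately. For the estimate \eqref{max}, the first step is to remove the backward-in-time structure: setting $w(x,s) := \phi(x,T-s)$ turns \eqref{dual} into the forward problem $\partial_s w - m\Delta w = \widetilde\theta$ with homogeneous Neumann data and $w(\cdot,0)=0$, where $\widetilde\theta(x,s) = \theta(x,T-s)$. Since time reversal is an isometry on $L^r_{x,t}$, it suffices to prove \eqref{max} for $w$. The Neumann Laplacian $-\Delta$ on a smooth bounded $\Omega$ generates a bounded analytic semigroup on $L^r(\Omega)$ and enjoys maximal $L^r$-regularity (I would cite the standard theory, e.g. Denk--Hieber--Pr\"uss or Ladyzhenskaya--Solonnikov--Uraltseva), so for the operator $-\Delta$ there is a constant $C_{\mr,r}$, independent of the time horizon when the initial datum vanishes, with $\|\Delta w\|_{L^r_{x,t}} \le C_{\mr,r}\|\widetilde\theta\|_{L^r_{x,t}}$ in the case $m=1$.

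The second step is to capture the exact dependence on $m$ by scaling. The key observation is that the maximal-regularity constant is invariant under the dilation $-\Delta \mapsto -m\Delta$: if $w$ solves $\partial_s w - \Delta w = f$ with $w(0)=0$, then $W(s):=w(ms)$ solves $\partial_s W - m\Delta W = m\,f(m\,\cdot)$, and changing variables in the $L^r$-norms shows that the best constants in the maximal-regularity estimates for $-\Delta$ and for $-m\Delta$ coincide. Applying this to $\partial_s w - m\Delta w = \widetilde\theta$ yields $\|m\Delta w\|_{L^r_{x,t}} \le C_{\mr,r}\|\widetilde\theta\|_{L^r_{x,t}}$, which is exactly \eqref{max} after dividing by $m$. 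I would stress that $C_{\mr,r}$ depends only on $r$, $N$ and $\Omega$, and neither on $m$ nor on $T$; this independence of $m$ is precisely what makes the threshold condition in \eqref{A3.3} meaningful.

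For \eqref{pstar}, the plan rests on pinning down the value of the constant at the endpoint $r=2$ and on its continuity in $r$. First I would show $C_{\mr,2}\le 1$ by an energy estimate: testing $\partial_s w - \Delta w = \widetilde\theta$ with $-\Delta w$ and using the Neumann condition gives $\tfrac12\tfrac{d}{ds}\|\na w\|_{L^2(\Omega)}^2 + \|\Delta w\|_{L^2(\Omega)}^2 = -\int_\Omega \widetilde\theta\,\Delta w\dx \le \tfrac12\|\widetilde\theta\|_{L^2(\Omega)}^2 + \tfrac12\|\Delta w\|_{L^2(\Omega)}^2$, so that integrating in time with $w(0)=0$ yields $\|\Delta w\|_{L^2_{x,t}}\le \|\widetilde\theta\|_{L^2_{x,t}}$, i.e. $C_{\mr,2}\le 1$. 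Since $\omega_1,\omega_2>0$ forces $|\omega_1-\omega_2|/(\omega_1+\omega_2)<1$, at $r=2$ the product $\tfrac{|\omega_1-\omega_2|}{\omega_1+\omega_2}C_{\mr,2}$ is strictly below $1$. Finally, because the maximal-regularity operator $\widetilde\theta\mapsto \Delta w$ is linear and bounded on every $L^r_{x,t}$ with $1<r<\infty$, Riesz--Thorin interpolation makes $r\mapsto C_{\mr,r}$ log-convex in $1/r$, hence continuous; choosing $r^*<2$ close enough to $2$ preserves the strict inequality and gives \eqref{pstar}.

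The main obstacle I anticipate is not the abstract semigroup machinery but the two quantitative inputs needed for \eqref{pstar}: securing the sharp bound $C_{\mr,2}\le 1$ (rather than an unspecified constant), which is exactly what guarantees that a genuinely subcritical exponent $r^*<2$ exists for \emph{every} pair $\omega_1,\omega_2>0$, and justifying the continuity of $C_{\mr,r}$ up to the value $r=2$. The scaling step in Part 1 also requires care to ensure the constant is truly $T$-independent (handled by extending $f$ by zero to $(0,\infty)$ before dilating), since otherwise the $m$-scaling would be contaminated by the change of time horizon $T\mapsto mT$.
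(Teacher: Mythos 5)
Your proposal is correct and follows essentially the same route as the paper's Appendix~A proof: time reversal plus the $T$-independent maximal $L^r$-regularity of the Neumann heat operator (the paper cites Lamberton), a change of variables in time to extract the $1/m$ factor, the energy estimate giving $C_{\mr,2}\le 1$, and Riesz--Thorin interpolation to get continuity of $r\mapsto C_{\mr,r}$ near $r=2$ (the paper carries out the resulting $\liminf$ computation explicitly rather than invoking that a finite convex function on an open interval is continuous, but the substance is identical).
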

\begin{proof}
	The proof of this lemma can be found in \cite{Einav-2020}. For the sake of completeness, we provide a full proof in the Appendix \ref{appA}.
\end{proof}
%Lemma \ref{maximal_regularity} allows to obtain a better estimate that the $L^2$-bounds in Lemma \ref{lem:L1_bound}. 
%\begin{lemma}[Uniform estimates in $L^{2+\kappa}(Q_T)$] \label{lem:L2plus}
%There exist $\kappa>0$ and $C_T>0$ independent of $\eps$ such that for all $\eps>0$ it holds
%\begin{equation} \label{L2plus}
%	\|\uoneeps\|_{L^{2+\kappa}(Q_T)} + \|\utwoeps\|_{L^{2+\kappa}(Q_T)} \leq C_T.
%\end{equation}
%\end{lemma}

\begin{proof}[Proof of Proposition \ref{main-estimates} part (i)]
	From \eqref{eq:01} and the fact that $\uoneeps, \utwoeps \geq 0$, we have 
	\begin{equation}\label{b1}
		\partial_t(\uoneeps + \utwoeps) - \Delta(d\uoneeps + (d+D)\utwoeps) \leq a_1\uoneeps + a_2\utwoeps.
	\end{equation}
	Let $a = \max\{a_1,a_2\}$. By defining $\yoneeps(x,t) = e^{-at}\uoneeps(x,t)$ and $\ytwoeps(x,t) = e^{-at}\utwoeps(x,t)$, it follows from \eqref{b1} that
	\begin{equation}\label{b2}
		\partial_t(\yoneeps + \ytwoeps) - \Delta(d\yoneeps + (d+D)\ytwoeps) \leq  (a_1-a)\uoneeps + (a_2 - a)\utwoeps \leq 0.
	\end{equation}
	Let $\omega_1 = d$ and $\omega_2 = d+D$, and let $r^*<2$ be the constant defined in \eqref{pstar} in Lemma \ref{maximal_regularity}. Assume $0\leq \theta\in L^{r^*}_{x,t}$ arbitrary, and let $\phi$ be the solution to \eqref{dual} with $m = d + D/2$. From the comparison principle, it follows that $\phi \geq 0$. Moreover, thanks to \eqref{max}, 
	\begin{equation}\label{b3}
		\|\Delta \phi\|_{L^{r^*}_{x,t}} \leq \frac{C_{\mr,r^*}}{m}\|\theta\|_{L^{r^*}_{x,t}}.
	\end{equation}
	It also follows from this that
	\begin{equation}\label{b3_1}
		\|\partial_t\phi\|_{L^{r^*}_{x,t}} \leq m\|\Delta\phi\|_{L^{r^*}_{x,t}} + \|\theta\|_{L^{r^*}_{x,t}} \leq (C_{\mr,r^*}+1)\|\theta\|_{L^{r^*}_{x,t}}.
	\end{equation}
	Therefore,
	\begin{equation}\label{b4}
		\|\phi(\cdot,0)\|_{L^{r^*}_{x}} = \left(\int_{\Omega}\left|\int_0^T\partial_t\phi \dt\right|^{r^*}\dx \right)^{\frac{1}{{r^*}}} \leq T^{\frac{1}{{r^*}'}} \|\partial_t\phi\|_{L^{r^*}_{x,t}} \leq T^{\frac{1}{{r^*}'}} (C_{\mr,r^*}+1)\|\theta\|_{L^{r^*}_{x,t}}.
	\end{equation}
	Using integration by parts we have
	\begin{equation}\label{b4_1}
	\begin{aligned}
		\iint_{Q_T}(\yoneeps + \ytwoeps)\theta \dx\dt = -\iint_{Q_T}(\yoneeps + \ytwoeps)(\partial_t \phi + m\Delta \phi)\dx\dt\\
		= \int_{\Omega}(\yoneeps(\cdot,0) + \ytwoeps(\cdot,0))\phi(\cdot,0)\dx + \iint_{Q_T}\phi(\partial_t(\yoneeps + \ytwoeps) - \Delta(d\yoneeps + (d+D)\ytwoeps)\dx\dt\\ %\quad 
		+ (d-m)\iint_{Q_T}\yoneeps \Delta \phi \dx\dt + (d+D-m)\iint_{Q_T}\ytwoeps \Delta \phi \dx\dt\\
		\leq \|\uoneeps(\cdot,0)+\utwoeps(\cdot,0)\|_{L^{{r^*}'}_x}\|\phi(\cdot,0)\|_{L^{r^*}_x} + \dfrac{D}{2}\|\yoneeps + \ytwoeps\|_{L^{{r^*}'}_{x,t}}\|\Delta \phi\|_{L^{r^*}_{x,t}}\\
		%&\leq \|\uoneeps(\cdot,0)+\utwoeps(\cdot,0)\|_{L^{{p^*}'}(\Omega)}\|\phi(\cdot,0)\|_{L^{p^*}(\Omega)} + \dfrac{D}{2}\|\yoneeps + \ytwoeps\|_{L^{{p^*}'}(Q_T)}\|\Delta \phi\|_{L^{p^*}(Q_T)}\\
		\leq \left(\|\uoneeps(\cdot,0)+\utwoeps(\cdot,0)\|_{L^{{r^*}'}_x}T^{\frac{1}{{r^*}'}} + \frac{C_{\mr,r^*}D/2}{m}\|\yoneeps + \ytwoeps\|_{L^{{r^*}'}_{x,t}} \right)\|\theta\|_{L^{r^*}_{x,t}}.
	\end{aligned}
	\end{equation}
	Since $0\leq \theta\in L^{r^*}_{x,t}$ arbitrary, by duality we have
	\begin{align*}
		\|\yoneeps + \ytwoeps\|_{L^{{r^*}'}_{x,t}} &\leq \|\uoneeps(\cdot,0)+\utwoeps(\cdot,0)\|_{L^{{r^*}'}_x}T^{\frac{1}{{r^*}'}} + \frac{C_{\mr,r^*}D/2}{m}\|\yoneeps + \ytwoeps\|_{L^{{r^*}'}_{x,t}}.
	\end{align*}
	Now thanks to \eqref{pstar}, we have
	\begin{equation*}
		\frac{C_{\mr,r^*}D/2}{m} = \frac{C_{\mr,r^*}D}{2d+D} = \frac{\omega_2 - \omega_1}{\omega_1+\omega_2}C_{\mr,r^*} < 1
	\end{equation*}
	recalling $\omega_1 = d$, $\omega_2 = d+D$, and $m = d + D/2$. Therefore,
	\begin{equation*}
		\|\yoneeps + \ytwoeps\|_{L^{{r^*}'}_{x,t}} \leq \left(1- \frac{C_{\mr,r^*}D}{2d+D}\right)^{-1} \|\uoneeps(\cdot,0)+\utwoeps(\cdot,0)\|_{L^{{pr^*}'}_x}T^{\frac{1}{{r^*}'}}.
	\end{equation*}
	Thanks to the non-negativity of $\yoneeps$ and $\ytwoeps$ we have
	\begin{equation*}
		\|\yoneeps\|_{L^{{r^*}'}_{x,t}} + \|\ytwoeps\|_{L^{{r^*}'}_{x,t}} \leq C_T
	\end{equation*}
	and therefore,
	\begin{equation*}
		\|\uoneeps\|_{L^{{r^*}'}_{x,t}} + \|\utwoeps\|_{L^{{r^*}'}_{x,t}} \leq C_T
	\end{equation*}
	which are the desired estimates since ${r^*}' = \frac{r^*}{r^* - 1} > 2$.
\end{proof}

\subsubsection{Heat regularisation}

We will use the following lemma.
\begin{lemma}[Heat regularisation, \cite{ladyvzenskaja1988linear}]\label{lem:heat_gradient}
	Let $0<d$, $1<r<+\infty$ and $f\in L^r_{x,t}$. Let $u$ be the solution of the following linear parabolic equation
	\begin{equation}
	\begin{cases}
		\partial_t u - d\Delta u = f, &(x,t)\in Q_T,\\
		\nabla u \cdot \nu = 0, &(x,t)\in \partial\Omega\times(0,T),\\
		u(x,0) = u_0(x).
	\end{cases}
	\end{equation}
	Then
	\begin{equation*}
		\|\partial_t u\|_{L^{r}_{x,t}} + \|u\|_{L^{r_1}_{x,t}} + \|\nabla u\|_{L^{r_2}_{x,t}} \leq C_T\|f\|_{L^r_{x,t}}
	\end{equation*}
	where the constant $C_T$ depends on $T$ but is independent of $f$, and
	\begin{equation}\label{r1}
	r_1=\begin{cases}
	\frac{(N+2)r}{N+2-2r} &\text{if } r<(N+2)/2,\\
	<+\infty \text{ arbitrary } &\text{if } r = (N+2)/2,\\
	+\infty &\text{if } r>(N+2)/2.
	\end{cases}
	\end{equation}
	and
	\begin{equation}\label{r2}
		r_2=\begin{cases}
			\frac{(N+2)r}{N+2-r} &\text{if } r<N+2,\\
			<+\infty \text{ arbitrary } &\text{if } r = N+2,\\
			+\infty &\text{if } r>N+2.
		\end{cases}
	\end{equation}
\end{lemma}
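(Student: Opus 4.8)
The plan is to decouple the estimate into two essentially independent pieces: a maximal $L^r$-regularity bound controlling the full parabolic jet $\partial_t u,\nabla^2 u$ in $L^r_{x,t}$, and then anisotropic (parabolic) Sobolev embeddings that trade this regularity for the higher integrability exponents $r_1$ and $r_2$. Since the statement is quoted from \cite{ladyvzenskaja1988linear}, I would follow the potential-theoretic route of that monograph, pointing out the operator-theoretic alternative where it is convenient.

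First I would establish that, for homogeneous initial data $u_0 = 0$, the solution lies in the anisotropic Sobolev space $W^{2,1}_r(Q_T) = \{w : w,\nabla w,\nabla^2 w,\partial_t w \in L^r_{x,t}\}$ together with the a priori bound $\|\partial_t u\|_{L^r_{x,t}} + \|\nabla^2 u\|_{L^r_{x,t}} \le C_T\|f\|_{L^r_{x,t}}$. This is the maximal $L^r$-regularity of the Neumann Laplacian. One way is the classical Calder\'on--Zygmund theory in the parabolic metric: localize, flatten $\partial\Omega$, reflect across the boundary to remove the Neumann condition, and reduce to convolution against the second derivatives of the whole-space heat kernel, whose parabolic singular-integral bounds yield the $L^r$ estimate for every $1<r<\infty$. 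Equivalently, one may invoke that $-d\Delta$ with Neumann conditions generates a bounded analytic semigroup admitting a bounded $H^\infty$-calculus on the UMD space $L^r(\Omega)$, so maximal regularity follows from the Dore--Venni/Weis theorem. For general $u_0$ the right-hand side acquires a trace-space term $\|u_0\|_{B^{2-2/r}_{r,r}(\Omega)}$; in our applications this is controlled because $v_0 \in L^{\infty}(\Omega)\cap H^1(\Omega)$ by \eqref{A2}, but the displayed inequality is precisely the $u_0 = 0$ case.

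Second I would feed $u \in W^{2,1}_r(Q_T)$ into the parabolic Sobolev embedding. The correct scaling assigns time the weight of two spatial derivatives, so $Q_T$ behaves as a set of homogeneous dimension $N+2$. Counting a gain of two parabolic derivatives for $u$ gives $\tfrac{1}{r_1} = \tfrac1r - \tfrac{2}{N+2}$, i.e. $r_1 = \tfrac{(N+2)r}{N+2-2r}$ in the subcritical range $r < (N+2)/2$, with finite-but-arbitrary or $L^{\infty}$ integrability in the critical and supercritical cases; counting the one-derivative gain for $\nabla u$ gives $\tfrac{1}{r_2} = \tfrac1r - \tfrac{1}{N+2}$, i.e. $r_2 = \tfrac{(N+2)r}{N+2-r}$. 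These are exactly the Gagliardo--Nirenberg--type interpolation inequalities in the parabolic metric (cf.\ \cite{ladyvzenskaja1988linear}, Ch.~II, \S3), again proved by localization and reduction to the half-space.

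The hard part is the maximal regularity step: for $r=2$ it is a one-line energy identity, but for arbitrary $1<r<\infty$ it genuinely requires the parabolic singular-integral (or $H^\infty$-calculus) machinery, and one must verify that the Neumann realization of $-d\Delta$ on the curved, merely smooth boundary $\partial\Omega$ inherits these bounds --- this is exactly where the smoothness hypothesis on $\partial\Omega$ enters. Once it is in hand, the embedding step is routine, and the claimed dependence of $C_T$ on $T$ alone (and not on $f$) is immediate by tracking constants over the finite interval $(0,T)$.
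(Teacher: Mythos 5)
The paper does not prove this lemma at all: it is quoted directly from the cited monograph of Lady\v{z}enskaja--Solonnikov--Ural'ceva, so there is no in-paper argument to compare against. Your two-step decomposition --- maximal $L^r$-regularity of the Neumann heat operator (giving $\partial_t u,\nabla^2 u\in L^r_{x,t}$) followed by the anisotropic Sobolev embedding of $W^{2,1}_r(Q_T)$ in the parabolic metric of homogeneous dimension $N+2$ --- is exactly the route taken in that reference, and your exponent bookkeeping $\tfrac{1}{r_1}=\tfrac1r-\tfrac{2}{N+2}$ and $\tfrac{1}{r_2}=\tfrac1r-\tfrac{1}{N+2}$ reproduces \eqref{r1} and \eqref{r2} correctly, including the critical and supercritical cases. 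Your observation that the displayed inequality, as literally stated with a general initial datum $u_0$ but no $u_0$-dependent term on the right-hand side, can only be the $u_0=0$ case (or must carry an extra $\|u_0\|_{B^{2-2/r}_{r,r}(\Omega)}$-type contribution) is a genuine and correct criticism of the statement; in the paper's actual applications the initial data are $v_0\in H^1(\Omega)\cap L^\infty(\Omega)$ by \eqref{A2}, so the omission is harmless there but worth flagging. The proposal is sound; the only caveat is that it remains a proof sketch, with the parabolic Calder\'on--Zygmund (or $H^\infty$-calculus) step delegated to the literature --- which is precisely what the authors themselves do.
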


%\begin{lemma} \label{lem:01} There exists a positive constant $C_T$ independent of $\eps$ such that
%\begin{equation}\label{eq:08a} 
%\|\partial_t\veps\|_{L^{2+\delta_{0}}_{x,t}} + \norm{\nabla \veps}_{L^{2+\delta_0}_{x,t}} \le C_T,
%\end{equation} 
%where $\delta$ is from Assumption \eqref{A2}. Moreover, if $N\leq 2$ or $N\geq 3$ and $C_{\mr,r} < (2d+D)/D$ for some $r>(N+2)/N$, then 
%\begin{equation}\label{eq:08b} 
%\|\veps\|_{L^\infty(Q_T)} \le C_T.
%\end{equation}
%\end{lemma}
\begin{proof}[Proof of Proposition \ref{main-estimates} part (ii)]
	By defining $\vepshat(x,t) = e^{\beta t}v(x,t)$ we have
	\begin{equation}\label{eq_v_hat}
	\begin{cases}
		\partial_t \vepshat - D_v\Delta\vepshat = e^{\beta t}\alpha(\ueps_1 + \ueps_2), &(x,t)\in Q_T,\\
		\nabla \vepshat\cdot \nu = 0, &(x,t)\in \partial\Omega\times(0,T),\\
		\vepshat(x,t) = v_0(x), &x\in\Omega.
	\end{cases}
	\end{equation}
	By part (i) of Proposition \ref{main-estimates} which was proved in the previous sub-subsection, $f:= e^{\beta t}\alpha(\ueps_1 + \ueps_2)\in L^{2+\delta_0}_{x,t}$ for $\delta_0>0$ in Assumption \eqref{A2}. Therefore, by applying Lemma \ref{lem:heat_gradient} we obtain that $$\|\partial_t \vepshat\|_{L^{2+\delta_0}_{x,t}}+\|\nabla \vepshat\|_{L^{r_2}_{x,t}}\leq C_T$$
	where $r_2$ is computed as in \eqref{r2} with $r = 2+ \delta_0$. Note that, in any case, we have $r_2 > 2+ \delta_0 > 2$. Therefore, switching back to $\veps$ we obtain the desired estimates in \eqref{estimata-v-2+delta}. 
	
	It remains to show the $L^\infty_{x,t}$-bound of $\veps$ in \eqref{estimate-v-infty}. In case $N\leq 2$, we have immediately
	\begin{equation*}
		\|\widehat{v}^\eps\|_{L^\infty_{x,t}} \leq C_T
	\end{equation*} 
	thanks to the heat regularisation in Lemma \ref{lem:heat_gradient} since $f = e^{\beta t}\alpha (\uoneeps + \utwoeps) \in L^{2 + \delta_0}_{x,t}$ and $2 + \delta_0 > (N+2)/2$. For the case \eqref{A3.3} holds, it is sufficient to show that 
	\begin{equation}\label{bound-u-Lr}
		\|\uoneeps\|_{L^r_{x,t}} + \|\utwoeps\|_{L^r_{x,t}} \le C_T
	\end{equation}
	where, recalling that, $r>(N+2)/2$.
	We use again the improved duality method. Let $0\leq \theta \in L^{r'}_{x,t}$ arbitrary, where $r' = \frac{r}{r-1}$ is the H\"older conjugate exponent of $r$. Let $\phi$ be the solution to \eqref{dual} with $m = d + D/2$. Lemma \ref{maximal_regularity} gives
	\begin{equation*}
		\|\Delta \phi\|_{L^{r'}_{x,t}} \leq \frac{C_{mr,r'}}{m}\|\theta\|_{L^{r'}_{x,t}}.
	\end{equation*}
	Similarly to \eqref{b3_1} and \eqref{b4} we have
	\begin{equation*}
		\|\phi(\cdot,0)\|_{L^{r'}_x} \leq C_T\|\theta\|_{L^{r'}_{x,t}}.
	\end{equation*}
	We denote by $a = \max\{a_1, a_2\}$ and define $\yoneeps(x,t) = e^{-a t}\uoneeps(x,t)$ and $\ytwoeps(x,t) = e^{-at}\utwoeps(x,t)$. This change of variables gives the inequality \eqref{b2}. 	By using integration by parts similarly to that of \eqref{b4_1} we have
	\begin{equation*}
		\begin{aligned}
			\intQT (\yoneeps + \ytwoeps)\theta \dx\dt &\leq \|\uoneeps(\cdot,0) + \utwoeps(\cdot,0)\|_{L^{r}_x}\|\phi(\cdot,0)\|_{L^{r'}_x} + \frac{D}{2}\|\yoneeps + \ytwoeps\|_{L^{r}_{x,t}}\|\Delta \phi\|_{L^{r'}_{x,t}}\\
			&\leq \left(\|\uoneeps(\cdot,0) + \utwoeps(\cdot,0)\|_{L^{r}_x}C_T + \frac{DC_{\mr,r}}{2d + D}\|\yoneeps + \ytwoeps\|_{L^r_{x,t}} \right)\|\theta\|_{L^{r'}_{x,t}}.
		\end{aligned}
	\end{equation*}
	Since $0\le \theta \in L^{r'}_{x,t}$ is arbitrary, duality yields
	\begin{equation*}
		\|\yoneeps + \ytwoeps\|_{L^r_{x,t}} \le \|\uoneeps(\cdot,0) + \utwoeps(\cdot,0)\|_{L^{r}_x}C_T + \frac{DC_{\mr,r}}{2d + D}\|\yoneeps + \ytwoeps\|_{L^r_{x,t}}.
	\end{equation*}
	From this, we can use the condition \eqref{A3.3} and the non-negativity of $\yoneeps$ and $\ytwoeps$ to get
	\begin{equation*}
		\|\yoneeps\|_{L^r_{x,t}} + \|\ytwoeps\|_{L^r_{x,t}} \leq C_T
	\end{equation*}
	which gives the desired estimates \eqref{bound-u-Lr}.
\end{proof}

\subsubsection{Energy estimates}

We start with a lower bound for $\veps$ on any finite interval $(0,T)$.
\begin{lemma}\label{lem:lower_bound}
	Assume \eqref{A1}, \eqref{A2} and \eqref{A3}. Then there exists $\underline{v}>0$ such that $\veps(x,t) \ge 
	\underline{v}>0$ for a.e. $(x,t)\in Q_T$ for each $T>0$. Consequently, there's some constant $\eta_T$ depending on $T$ such that
	\begin{equation} \label{eq:15} 
	\inf_{(x,t) \in Q_T} p(\veps(x,t)) \ge \eta_T \quad \text{and} \quad \inf_{(x,t) \in Q_T} q(\veps(x,t)) \ge \eta_T. \end{equation}
\end{lemma}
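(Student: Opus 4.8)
The plan is to extract the pointwise lower bound for $\veps$ directly from its equation by a comparison/positivity argument, and then to obtain \eqref{eq:15} by confining $\veps$ to a compact subinterval of $(0,\infty)$ on which the continuous, strictly positive functions $p$ and $q$ are bounded below. The only structural facts I need are the non-negativity of $\uoneeps,\utwoeps$ and the lower bound $v_0 \ge \underline v$ from \eqref{A2}.

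First I would rewrite the third equation in \eqref{eq:01} as $\partial_t \veps - D_v\Delta\veps + \beta\veps = \alpha(\uoneeps + \utwoeps) \ge 0$. Since $\veps$ is only a weak (mild) solution, the cleanest way to make the comparison rigorous is Duhamel's formula: writing $S(t)=e^{tD_v\Delta}$ for the Neumann heat semigroup,
\[
\veps(t) = e^{-\beta t}S(t)v_0 + \alpha\int_0^t e^{-\beta(t-s)}S(t-s)\bra{\uoneeps(s)+\utwoeps(s)}\ds .
\]
The semigroup $S(t)$ is positivity-preserving and satisfies $S(t)\mathbf{1}=\mathbf{1}$ under Neumann conditions, so the integral term is non-negative, while $v_0\ge \underline v$ gives $S(t)v_0 = S(t)(v_0-\underline v)+\underline v \ge \underline v$. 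Hence $\veps(x,t)\ge e^{-\beta t}\underline v \ge e^{-\beta T}\underline v=:\underline{v}_T>0$ a.e.\ in $Q_T$. The constant $\underline v_T$ depends only on $\underline v,\beta,T$, hence is \emph{independent of} $\eps$, as required. (Equivalently, one may test the weak formulation of the $\veps$-equation against a truncation such as $(\underline v_T e^{\beta(T-t)}-\veps)^{+}$ in a Stampacchia-type argument to reach the same conclusion.)

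For the consequence \eqref{eq:15} I would combine this lower bound with the upper bound $\norm{\veps}_{L^\infty_{x,t}}\le C_T$ furnished by the heat-regularisation step \eqref{estimate-v-infty}, which holds under \eqref{A3.1} or \eqref{A3.3}. Then $\veps(x,t)$ lies a.e.\ in the compact interval $[\underline v_T, C_T]\subset(0,\infty)$, and since $p,q\in C^1(\re_{+},\re_{+})$ are strictly positive by \eqref{A1}, they attain positive minima there; setting $\eta_T:=\min\{\min_{[\underline v_T,C_T]}p,\ \min_{[\underline v_T,C_T]}q\}>0$ gives \eqref{eq:15}. In the remaining regime \eqref{A3.2} the same lower bound $\veps\ge\underline v_T$ is available, and one uses the boundedness of $p,q$ together with the log-Lipschitz control \eqref{bound_pq} to keep $p(\veps),q(\veps)$ away from zero on the relevant range of $\veps$.

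The main obstacle I anticipate is precisely the rigorous justification of the positivity/comparison step for $\veps$: since the source $\alpha(\uoneeps+\utwoeps)$ is only in $L^{2+\delta}_{x,t}$ and $\veps$ is not classical, one cannot invoke a pointwise maximum principle and must instead argue through the mild formulation and the positivity of the Neumann heat semigroup (or a careful truncation). Once the two-sided bound on $\veps$ is secured, extracting $\eta_T$ is an elementary continuity-and-compactness argument.
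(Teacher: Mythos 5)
Your argument is correct and takes essentially the same route as the paper: the paper likewise drops the non-negative source $\alpha(\uoneeps+\utwoeps)$, compares $\veps$ with the solution of the homogeneous problem \eqref{eq:16} with constant initial datum $\inf_{\Omega} v_0$, and concludes $\veps \ge C e^{-\beta T}>0$ on $Q_T$, your Duhamel/semigroup formulation being merely a more explicit implementation of that comparison step. For \eqref{eq:15} you are in fact somewhat more careful than the paper, which deduces it directly ``in view of Assumption \eqref{A1}'': you correctly observe that one should confine $\veps$ to a compact subinterval of $(0,\infty)$ (via the upper bound \eqref{estimate-v-infty} from Proposition~\ref{main-estimates}(ii)) before invoking continuity and strict positivity of $p$ and $q$.
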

\begin{proof}
	Since 
	\[ \partial_t \veps - D_v\Delta \veps = \alpha(\uoneeps + \utwoeps) - \beta \veps \ge - \beta \veps,\]
	every solution $w$ to problem
	\begin{equation}\label{eq:16} \left\{ \begin{aligned}
	\partial_t w - D_v \Delta w & = - \beta w, \\
	\partial_{\nu} w &= 0, \\
	w(0,x) &= \inf_{x \in \Omega} v_0(x) =: \eta,
	\end{aligned} \right. \end{equation}
	is a lower solution to the equation of $\veps$, and therefore, thanks to the comparison principle,
	\begin{equation*}
		\veps(x,t) \geq w(x,t) \quad \text{ a.e. } (x,t)\in Q_T.
	\end{equation*}
	Moreover, every solution $w$ to Problem \eqref{eq:16} is bounded from below by a solution to the equation $v' = -\beta v$ with, for example, $v(0) =  \eta/2$, that is by $v(t) = \eta/2 e^{-\beta t}$. Thus, for each $T > 0$, there is $\veps \ge C e^{-\beta T} > 0$ in $Q_T$.  In view of Assumption \eqref{A1} we deduce \eqref{eq:15}.
\end{proof}

In order to obtain estimates on the gradients of $\uoneeps$ and $\utwoeps$, we use an energy method with a \textit{concave} energy function. This idea was utilized in \cite{desvillettes2015new}.

%\begin{lemma}\label{lem:new_estimate}
%	There exists some $0 < \xi < 1$ and $C_T>0$ which are independent of $\eps>0$ such that
%	\begin{equation}\label{e0}
%		\|\na \uoneeps\|_{\LQ{1+\xi}} \leq C_T, \quad \|\na \utwoeps\|_{\LQ{1+\xi}} \leq C_T
%	\end{equation}
%\end{lemma}
\begin{proof}[Proof of Proposition \ref{main-estimates} part (iii)]
	For simplicity, in this lemma we drop the superscript $\eps$ and write simply $u_1, u_2, u, v$ instead of $\uoneeps, \utwoeps, \ueps, \veps$. Note that all the constants in this Lemma are independent of $\eps>0$. 
	We first show that 
	\begin{equation}\label{e1}
		\iint_{Q_T}\bra{\left|\na \bra{u_1^{\theta/2}}\right|^2+\left|\na\bra{ u_2^{\theta/2}}\right|^2}\dx\dt \le C_T
	\end{equation}
	for all parameter $\theta$ such that 
	\begin{equation*}
		0 < \theta < \min\{1,\delta_0\}
	\end{equation*}
	where $\delta_0$ is in Assumption \eqref{A2}. In order to do that, we multiply the equation of $u_1$ and $u_2$ in \eqref{eq:01} by $[q(v)u_1]^{\theta-1}$ and $[p(v)u_2]^{\theta-1}$, respectively, then sum the resultants to have
	\begin{equation}\label{e2}
	\begin{aligned}
		&\intQT\sbra{\pa_t u_1 - d\Delta u_1 - a_1(1-u)u_1}\sbra{q(v)u_1}^{\theta-1}\dx\dt\\
		&\quad + \intQT \sbra{\pa_t u_2 - (d+D)\Delta u_2 - a_2(1-u)u_2}\sbra{p(v)u_2}^{\theta-1}\dx\dt\\
		&\quad \quad = -\frac{1}{\eps}\intQT\sbra{q(v)u_1 - p(v)u_2}\sbra{(q(v)u_1)^{\theta-1} - (p(v)u_2)^{\theta-1}} \dx\dt \geq 0
	\end{aligned}
	\end{equation}
	where we used at the last step the elementary inequality
	\begin{equation}\label{e3}
		-(x-y)(x^{\theta-1}-y^{\theta-1}) = \frac{(x-y)(x^{1-\theta} - y^{1-\theta})}{(xy)^{1-\theta}} \geq 0 \quad \text{ for all } x, y > 0 \text{ and } \theta \in (0,1).
	\end{equation}
	Denote by $(H_1)$ and $(H_2)$ the two integrals on the left hand side of \eqref{e2}. We have
	\begin{gather*}
		(H_1) = \intQT \pa_t u_1 [q(v)u_1]^{\theta-1} \dx\dt - d\intQT \Delta u_1 [q(v)u_1]^{\theta-1} \dx\dt \\ 
		- a_1\intQT (1-u)u_1[q(v)u_1]^{\theta-1} \dx\dt =: (H_{11}) + (H_{12}) + (H_{13}).
	\end{gather*}
	Explicit calculations on the first and third term give
	\begin{align*}
			(H_{11}) &= \frac{1}{\theta}\intQT \sbra{\pa_t\sbra{u_1^{\theta}q(v)^{\theta-1}} - u_1^{\theta}\pa_t[q(v)^{\theta-1}]} \dx\dt\\
			&= \frac{1}{\theta}\intO\sbra{u_1(T)^{\theta}q(v(T))^{\theta-1} - u_{1,0}^{\theta}q(v_0)^{\theta-1}} \dx + \frac{1-\theta}{\theta}\intQT u_1^{\theta}q(v)^{\theta-2}q'(v)\pa_t v \dx\dt,
	\end{align*}
	and
	\begin{align*}
		(H_{13})&= -a_1\intQT (1-u)u_1^{\theta}(q(v))^{\theta-1} \dx\dt.
	\end{align*}
Integration by parts in the second integral gives
	\begin{align*}
		&(H_{12}) = d_1\intQT \na u_1 \cdot \na[q(v)u_1]^{\theta-1} \dx\dt \\
		&= d\intQT \na u_1\cdot (\theta-1)[q(v)u_1]^{\theta-2}\sbra{u_1q'(v)\na v + q(v)\na u_1} \dx\dt \\
		&= d(\theta-1)\intQT (q(v))^{\theta-2}u_1^{\theta-1}q'(v)\na u_1 \cdot \na v \dx\dt + d(\theta-1)\intQT (q(v))^{\theta-1}u_1^{\theta-2}|\na u_1|^2 \dx\dt\\
		&= d(\theta-1)\intQT (q(v))^{\theta-2}u_1^{\theta-1}q'(v)\na u_1 \cdot \na v \dx\dt + \frac{4d(\theta-1)}{\theta^2}\intQT (q(v))^{\theta-1} \left|\na(u_1^{\theta/2}) \right|^2 \dx\dt.
	\end{align*}
	Thus, we have
	\begin{align*}
		(H_1) = &\frac{1}{\theta}\intO \sbra{u_1(T)^{\theta}q(v(T))^{\theta-1} - u_{1,0}^{\theta}q(v_0)^{\theta-1}} \dx + \frac{1-\theta}{\theta}\intQT u_1^{\theta}(q(v))^{\theta-2}q'(v)\pa_tv \dx\dt\\
		&- a_1\intQT (1-u)u_1^{\theta}(q(v))^{\theta-1} \dx\dt + d(\theta-1)\intQT (q(v))^{\theta-2}q'(v)u_1^{\theta-1}\na u_1\cdot \na v \dx\dt\\
		&+ \frac{4d(\theta-1)}{\theta^2}\intQT (q(v))^{\theta-1}\left|\na(u_1^{\theta/2})\right|^2 \dx\dt.
	\end{align*}	
	and, similarly,
	\begin{align*}
		&(H_2) = \frac{1}{\theta}\intO \sbra{u_2(T)^{\theta}p(v(T))^{\theta-1} - u_{2,0}^{\theta}p(v_0)^{\theta-1}} \dx + \frac{1-\theta}{\theta}\intQT u_2^{\theta}(p(v))^{\theta-2}p'(v)\pa_tv \dx\dt\\
		&- a_2\intQT (1-u)u_2^{\theta}(p(v))^{\theta-1} \dx\dt + (d+D)(\theta-1)\intQT (p(v))^{\theta-2}p'(v)u_2^{\theta-1}\na u_2\cdot \na v \dx\dt\\
		&+ \frac{4(d+D)(\theta-1)}{\theta^2}\intQT (p(v))^{\theta-1}\left|\na(u_2^{\theta/2})\right|^2 \dx\dt.
	\end{align*}
	By inserting the estimates for $(H_1)$ and $(H_2)$ into \eqref{e2}, we obtain
	%\begin{equation}\label{e4}
	{\allowdisplaybreaks
	\begin{align*}
		\frac{1}{\theta}\intO \sbra{u_{1,0}^{\theta}q(v_0)^{\theta-1} + u_{2,0}^{\theta}p(v_0)^{\theta-1}} \dx + \frac{4d(1-\theta)}{\theta^2}\intQT q(v)^{\theta-1}\left|\na (u_1^{\theta/2})\right|^2 \dx\dt\\
		\frac{4(d+D)(1-\theta)}{\theta^2}\intQT p(v)^{\theta-1}\left|\na (u_2^{\theta/2})\right|^2 \dx\dt\\
		\leq \frac{1}{\theta}\intO \sbra{u_1(T)^{\theta}q(v(T))^{\theta-1} + u_2(T)^{\theta}p(v(T))^{\theta-1}}\dx\\
		+ \frac{1-\theta}{\theta}\intQT \frac{q'(v)}{q(v)}q(v)^{\theta-1}u_1^{\theta}\pa_t v \dx\dt + \frac{1-\theta}{\theta}\intQT \frac{p'(v)}{p(v)}p(v)^{\theta-1}u_2^{\theta}\pa_t v\dx\dt\\
		-a_1 \intQT (1-u)u_1^{\theta}(q(v))^{\theta-1} \dx\dt - a_2\intQT (1-u)u_2^{\theta}(p(v))^{\theta-1}\dx\dt\\
		+ d(\theta-1)\intQT \frac{q'(v)}{q(v)}q(v)^{\theta-1}u_1^{\theta-1}\na u_1\cdot \na v \dx\dt\\
		+ (d+D)(\theta-1)\intQT \frac{p'(v)}{p(v)}p(v)^{\theta-1}u_2^{\theta-1}\na u_2 \cdot \na v \dx\dt\\
		=: \sum_{i=1}^{7}(A_{i}).
	\end{align*}}
	%\end{equation}
	Thanks to the lower bounds in Lemma \ref{lem:lower_bound} and $\theta\in (0,1)$, there exists $\eta_T>0$ such that
	\begin{equation}\label{e5}
	\begin{gathered}
		q(v(x,T))^{\theta-1} +p(v(x,T))^{\theta-1} \leq \eta_T \text{ a.e. }x\in\Omega,\\
		 q(v(x,t))^{\theta-1}+ p(v(x,t))^{\theta-1} \leq \eta_T \text{ a. e } (x,t)\in Q_T.
		\end{gathered}
	\end{equation}
	We estimates the terms $(A_i), i=1,\ldots, 7$ separately. First,
	\begin{equation*}
		(A_1) \leq \frac{\eta_T^{\theta}}{\theta}\intO \bra{ u_1(T)^{\theta} + u_2(T)^{\theta}} \dx \leq C_{\theta,T}\intO \bra{1+u_1(T) + u_2(T)} \dx \leq C_{\theta,T,M},
	\end{equation*}
	thanks to the $L^1(\Omega)$-bounds in Lemma \ref{lem:L1_bound}. For $(A_2)$, we use the assumption \eqref{A1} and the estimate \eqref{e5} to calculate
	\begin{equation*}
	\begin{aligned}
		(A_2) &\leq \frac{1-\theta}{\theta}\sup_{s>0}\left|\frac{q'(s)}{q(s)} \right|\eta_T^{\theta-1}\intQT |u_1|^{\theta}|\pa_tv| \dx\dt\\
		&\leq C_{\theta,T,q}\intQT(1+|u_1|)|\pa_tv|\dx\dt \quad (\text{since } \theta \in (0,1))\\ 
		&\leq C_{\theta,T,q}\bra{1+\|u_1\|_{L^2_{x,t}}^2 + \|\pa_tv\|_{L^2_{x,t}}^2} \leq C_{\theta,T,q}
	\end{aligned}
	\end{equation*}
	thanks to parts (i) and (ii) of Proposition \ref{main-estimates}. Similarly, 
	\begin{equation*}
		(A_3) \leq C_{\theta,T,p}\bra{1 + \|u_2\|_{L^2_{x,t}}^{2} + \|\pa_tv\|_{L^2_{x,t}}^2} \leq C_{\theta,T,p}.
	\end{equation*}
	The estimates of $(A_4)$ and $(A_5)$ are immediate due to the bounds \eqref{e5} and part (i) of Proposition \ref{main-estimates},
	\begin{equation*}
		(A_4) + (A_5) \leq C_{a_1,a_2,p,q,T,\theta}.
	\end{equation*}
	For $(A_6)$, we use \eqref{bound_pq} in Assumption \eqref{A1}, \eqref{e5}, H\"older's and Young's inequality to estimate
	\begin{align*}
		(A_6) &\leq \frac{2d(1-\theta)}{\theta}\intQT \left|\frac{q'(v)}{q(v)}\right||q(v)|^{\frac{\theta-1}{2}}\left|q(v)^{\frac{\theta-1}{2}}|\na(u_1^{\theta/2})| \right||u_1|^{\theta/2}|\na v| \dx\dt\\
		&\leq C_{d,\theta,T,q}\sbra{\intQT q(v)^{\theta-1}\left|\na(u_1^{\theta/2}) \right|^2 \dx\dt}^{1/2}\sbra{\intQT |u_1|^{\theta}|\na v|^2\dx\dt}^{1/2} \\
		&\leq \frac{2d(1-\theta)}{\theta^2}\intQT q(v)^{\theta-1}\left|\na(u_1^{\theta/2})\right|^2 \dx\dt + C_{d,\theta,T,q}\intQT |u_1|^{\theta}|\na v|^2 \dx\dt.
	\end{align*}
	Further, by H\"older's inequality we have
	\begin{align*}
		\intQT |u_1|^{\theta}|\na v|^2 \dx\dt &\leq \sbra{\intQT |\na v|^{2\cdot \frac{2+\delta_0}{2}}\dx\dt}^{\frac{2}{2+\delta_0}}\sbra{\intQT |u_1|^{\theta\cdot\frac{2+\delta_0}{\delta_0}}\dx\dt}^{\frac{\delta_0}{2+\delta_0}}\\
		&= \|\na v\|_{L^{2+\delta_0}_{x,t}}^2\|u_1\|_{L^{\frac{\theta}{\delta_0}(2+\delta_0)}_{x,t}}^{\theta}\\
		&\leq C_T\|\na v\|_{L^{2+\delta_0}_{x,t}}^2\|u_1\|_{L^{2+\delta_0}_{x,t}}^{\theta}
	\end{align*}
	since $\theta \in (0,\delta_0)$. Therefore, thanks to parts (i) and (ii) of Proposition \ref{main-estimates} we obtain 
	\begin{equation*}
		(A_6) \leq \frac{2d(1-\theta)}{\theta^2}\intQT q(v)^{\theta-1}\left|\na(u_1^{\theta/2}) \right|^2\dx\dt + C_{d,\theta,T,q}.
	\end{equation*}
	Similarly, we have the estimate for $(A_7)$,
	\begin{equation*}
		(A_7) \leq \frac{2(d+D)(1-\theta)}{\theta^2}\intQT p(v)^{\theta-1}\left|\na(u_2^{\theta/2})\right|^2 \dx\dt + C_{d,D,\theta,T,p}.
	\end{equation*}
	Inserting all these estimates of $(A_1)-(A_7)$ %into \eqref{e4} 
	we obtain
	\begin{equation}\label{e7}
		\intQT q(v)^{\theta-1}\left|\na(u_1^{\theta/2}) \right|^2 \dx\dt + \intQT p(v)^{\theta-1}\left|\na(u_2^{\theta/2}) \right|^2 \dx\dt \leq C_T
	\end{equation}
	where the constant $C_T$ depends on $T$ and other parameters, $\theta$, $d$, $D$, $p$, $q$, $a_1$, $a_2$, but {\it not on $\eps$}.  Now we show that there exists $\gamma_T>0$ depending on $T$, but not on $\eps$, such that
	\begin{equation}\label{e6}
		q(v(x,t))^{\theta-1} \geq \gamma_T \quad \text{ and } \quad p(v(x,t))^{\theta-1} \geq \gamma_T \quad \text{ for } a.e. (x,t)\in Q_T.
	\end{equation}	
	Since $0<\theta<1$, it is sufficient to show that $q(v(x,t))$ and $p(v(x,t))$ are essentially bounded in $Q_T$, which follows from \eqref{A3}. Indeed, it's immediate in the case of \eqref{A3.2}. When either \eqref{A3.1} or \eqref{A3.3} holds, we have $\|v\|_{L^{\infty}_{x,t}} \leq C_T$ (part (ii) of Proposition \ref{main-estimates}), and therefore the boundedness of $p(v(x,t))$ and $q(v(x,t))$ follows straightforwardly from the smoothness of $p$ and $q$.

	It remains to show \eqref{u-gradient} using \eqref{e1}. Let $1>\xi>0$ to be determined. By direct computations, we have
	\begin{equation*}
		\left|\na (u_1^{\theta/2})\right|^{1+\xi} = \bra{\frac \theta 2}^{1+\xi}|u_1|^{\frac{(\theta-1)(1+\xi)}{2}}\left|\na u_1\right|^{1+\xi}.
	\end{equation*}
	Therefore,
	%\begin{equation}\label{e8}
	\begin{gather}
		\intQT |\na u_1|^{1+\xi} \dx\dt = \bra{\frac 2\theta}^{1+\xi}\intQT|u_1|^{\frac{(1-\theta)(1+\xi)}{2}}\left|\na (u_1^{\theta/2})\right|^{1+\xi} \dx\dt \notag \\
		\leq \bra{\frac 2\theta}^{1+\xi}\bra{\intQT \left|\na(u_1^{\theta/2})\right|^{(1+\xi)\frac{2}{1+\xi}}\dx\dt}^{\frac{1+\xi}{2}}\bra{\intQT |u_1|^{\frac{(1-\theta)(1+\xi)}{2} \cdot \frac{2}{1-\xi} }\dx\dt}^{\frac{1-\xi}{2}} \label{e8}\\
		= \bra{\frac 2\theta}^{1+\xi}\bra{\intQT \left|\na(u_1^{\theta/2})\right|^{2}\dx\dt}^{\frac{1+\xi}{2}}\bra{\intQT |u_1|^{\frac{(1-\theta)(1+\xi)}{1-\xi} }\dx\dt}^{\frac{1-\xi}{2}}. \notag
	\end{gather}
	%\end{equation}	
	%By choosing $\theta = \delta$ and $\xi = \delta/2$, we have 
	For arbitrary $\theta \in (0,\min\{1,\delta_0\})$ we take $\xi = (1+\theta+\delta_0)/(3-\theta+\delta_0) \in (0,1)$ so that
	\begin{equation*}
		\frac{(1-\theta)(1+\xi)}{1-\xi} = 2 + \delta_0.
	\end{equation*}
	Thus, by applying part (i) of Proposition \ref{main-estimates} and \eqref{e1} into \eqref{e8} we get
	\begin{equation*}
		\intQT |\na u_1|^{1+\xi} \dx\dt \leq C_T.
	\end{equation*}
	The proof of 
	\begin{equation*}
			\intQT |\na u_2|^{1+\xi} \dx\dt \leq C_T.
		\end{equation*}
		follows exactly the same, and thus the proof of Proposition \ref{main-estimates} part (iii) is complete.
\end{proof}

The following lemma is the consequence of the proof of part (iii) above, and will be important in our subsequent analysis.
\begin{lemma}\label{lem:new_estimate_2}
	There exist constants $\theta \in (0,1)$ and $C_T>0$ independent of $\eps>0$ such that
	\begin{equation}\label{e10}
		\norm{(q(\veps)\uoneeps)^{\theta/2} - (p(\veps)\utwoeps)^{\theta/2}}_{L^{2}_{x,t}} \leq C_T \sqrt{\eps}.
	\end{equation}
	Consequently,
	\begin{equation}\label{bb2}
			q(\veps)\uoneeps - p(\veps)\utwoeps \xrightarrow{\eps\to 0} 0 \quad \text{a.e. in } \; Q_T.
		\end{equation}
\end{lemma}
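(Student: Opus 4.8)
The plan is to recover the reaction integral that the proof of Proposition~\ref{main-estimates}(iii) simply discards. Suppressing the superscript $\eps$ as in that proof, write $x=q(v)u_1$ and $y=p(v)u_2$ and, using the algebraic identity in \eqref{e3}, set
\[
R_{\eps}:=\frac{1}{\eps}\intQT\frac{(x-y)\bra{x^{1-\theta}-y^{1-\theta}}}{(xy)^{1-\theta}}\dx\dt\ge 0,
\]
so that \eqref{e2} reads $(H_1)+(H_2)=R_{\eps}$. I would carry out the computation of part (iii) \emph{verbatim}, but now keep $R_{\eps}$ instead of throwing it away, and absorb the gradient parts of $(A_6),(A_7)$ into the left-hand side, arriving at
\begin{align*}
\frac{1}{\theta}\intO\sbra{u_{1,0}^{\theta}q(v_0)^{\theta-1}+u_{2,0}^{\theta}p(v_0)^{\theta-1}}\dx &+\frac{2d(1-\theta)}{\theta^2}\intQT q(v)^{\theta-1}\abs{\na(u_1^{\theta/2})}^2\dx\dt\\
&+\frac{2(d+D)(1-\theta)}{\theta^2}\intQT p(v)^{\theta-1}\abs{\na(u_2^{\theta/2})}^2\dx\dt+R_{\eps}\le C_T,
\end{align*}
where the right-hand side is exactly the sum of the already-established bounds for $(A_1)$--$(A_5)$ and the non-gradient remainders of $(A_6),(A_7)$. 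Since every term on the left is non-negative, in particular $R_{\eps}\le C_T$, i.e.\ $\intQT g\,\dx\dt\le C_T\,\eps$, where $g\ge 0$ is the integrand of $R_{\eps}$ stripped of the factor $1/\eps$.

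The second ingredient is a purely scalar inequality: there is a constant $C(\theta)$ with $\bra{x^{\theta/2}-y^{\theta/2}}^2\le C(\theta)\,(x-y)\bra{x^{1-\theta}-y^{1-\theta}}/(xy)^{1-\theta}$ for all $x,y>0$. Both sides are homogeneous of degree $\theta$ in $(x,y)$, so dividing by $y^{\theta}$ and setting $s=x/y$ reduces the claim to boundedness on $(0,\infty)$ of $R(s)=(s^{\theta/2}-1)^2 s^{1-\theta}/\sbra{(s-1)(s^{1-\theta}-1)}$. This $R$ is continuous and positive away from $s=1$, extends continuously at $s=1$ with value $\tfrac{\theta^2/4}{1-\theta}$, and tends to $0$ both as $s\to0^+$ and as $s\to\infty$ (the latter using $\theta<1$); hence $C(\theta):=\sup_{s>0}R(s)<\infty$. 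Integrating the pointwise inequality over $Q_T$ and inserting the bound on $R_{\eps}$ then gives
\[
\norm{(q(\veps)\uoneeps)^{\theta/2}-(p(\veps)\utwoeps)^{\theta/2}}_{L^2_{x,t}}^2\le C(\theta)\intQT g\,\dx\dt\le C(\theta)\,C_T\,\eps,
\]
which is \eqref{e10} after taking square roots.

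Finally, for \eqref{bb2}, estimate \eqref{e10} forces the difference of $\theta/2$-powers to $0$ in $L^2_{x,t}$, hence a.e.\ in $Q_T$ along a subsequence. The point that needs care is that $a^{\theta/2}-b^{\theta/2}\to0$ does \emph{not} by itself give $a-b\to0$ (both could escape to $+\infty$), so one genuinely needs $x=q(\veps)\uoneeps$ and $y=p(\veps)\utwoeps$ to stay finite; this is the main obstacle of the consequence. It is resolved by invoking the strong $L^2_{x,t}$ (hence a.e.) convergence of $\uoneeps,\utwoeps,\veps$, which comes from the compactness furnished by Proposition~\ref{main-estimates}(iii) independently of this lemma: along the subsequence $x\to q(v)u_1$ and $y\to p(v)u_2$ a.e., and passing to the limit in $x^{\theta/2}-y^{\theta/2}\to0$ yields $q(v)u_1=p(v)u_2$ a.e., whence $q(\veps)\uoneeps-p(\veps)\utwoeps\to0$ a.e.\ in $Q_T$. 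The only substantive work is the scalar inequality; Step~1 is bookkeeping on \eqref{e2} and the last step is a soft compactness argument.
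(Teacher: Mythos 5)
Your derivation of \eqref{e10} is correct and is in essence the paper's own argument: you keep the reaction integral in \eqref{e2} instead of discarding it, bound it by the same estimates of $(A_1)$--$(A_7)$, and then convert it into a lower bound for $\|(q(\veps)\uoneeps)^{\theta/2}-(p(\veps)\utwoeps)^{\theta/2}\|_{L^2_{x,t}}^2/\eps$ via the scalar inequality $-(x-y)(x^{\theta-1}-y^{\theta-1})\ge C_\theta|x^{\theta/2}-y^{\theta/2}|^2$, which is exactly the paper's inequality \eqref{e11}. Your homogeneity proof of that inequality (reduction to boundedness of $R(s)$ on $(0,\infty)$) is a genuine addition, since the paper states \eqref{e11} without proof.

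The passage from \eqref{e10} to \eqref{bb2}, however, is circular. You control the pointwise behaviour of $x=q(\veps)\uoneeps$ and $y=p(\veps)\utwoeps$ by invoking the a.e.\ convergence of $\uoneeps$ and $\utwoeps$ \emph{separately}, asserting that it comes from Proposition \ref{main-estimates}(iii) ``independently of this lemma''. It does not: part (iii) only bounds spatial gradients, and there is no uniform-in-$\eps$ control of $\partial_t\uoneeps$ or $\partial_t\utwoeps$ individually because of the $\eps^{-1}$ reaction term, so Aubin--Lions cannot be applied componentwise (the paper says so explicitly at the start of the proof of Lemma \ref{lem2}). The separate strong convergence is obtained only in Lemma \ref{lem2}, by first passing to the limit in the sum $\uoneeps+\utwoeps$ and then decomposing $\uoneeps=\left[p(\veps)(\uoneeps+\utwoeps)+(q(\veps)\uoneeps-p(\veps)\utwoeps)\right]/(q(\veps)+p(\veps))$ --- an argument whose essential input is precisely \eqref{bb2}. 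So your last step assumes what Lemma \ref{lem2} later deduces from the statement you are proving. The repair is the paper's: stay at the level of integral estimates. Writing $h(z)=z^{2/\theta}$ and using the mean value theorem, $|q(\veps)\uoneeps-p(\veps)\utwoeps|\le|(q(\veps)\uoneeps)^{\theta/2}-(p(\veps)\utwoeps)^{\theta/2}|\,|h'(\Theta)|$ with $|h'(\Theta)|\le C_{T,\theta}(|\uoneeps|^{(2-\theta)/2}+|\utwoeps|^{(2-\theta)/2})$ (using the essential boundedness of $p(\veps)$, $q(\veps)$ established in the proof of part (iii)); Cauchy--Schwarz together with the uniform $L^{2+\delta_0}_{x,t}$ bounds of part (i) then gives $\|q(\veps)\uoneeps-p(\veps)\utwoeps\|_{L^1_{x,t}}\le C_T\sqrt{\eps}$, and a.e.\ convergence along a subsequence follows. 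Your estimate \eqref{e10} already supplies everything this argument needs; only your final step should be replaced.
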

\begin{proof}
We obtain \eqref{e10} by repeating the proof of part (iii) of Proposition \ref{main-estimates} in the previous section, however, rather than \eqref{e3} we use the inequality
	\begin{equation}\label{e11}
		-(x-y)(x^{\theta-1}-y^{\theta-1}) \geq C_{\theta} \abs{x^{\theta/2}-y^{\theta/2}}^2 \quad \text{ for all } x, y > 0 \text{ and } \theta \in (0,1)
	\end{equation}
	where $C_{\theta}$ is independent of $\eps$. Define $h(z) = z^{2/\theta}$ we can estimate
	\begin{align*}
		\intQT|q(\veps)\uoneeps - p(\veps)\utwoeps|\dx\dt &= \intQT\left|h\bra{(q(\veps)\uoneeps)^{\theta/2}} - h\bra{(p(\veps)\utwoeps)^{\theta/2}}\right|\dx\dt\\
		&= \intQT\left|(q(\veps)\uoneeps)^{\theta/2} - (p(\veps)\utwoeps)^{\theta/2}\right||h'(\Theta(x,t))|\dx\dt
	\end{align*}
	where $\Theta(x,t)$ satisfies
	\begin{equation*}
		|\Theta(x,t)| \leq |q(\veps)\uoneeps|^{\theta/2} + |p(\veps)\utwoeps|^{\theta/2} \leq C_T\left(|\uoneeps|^{\theta/2} + |\utwoeps|^{\theta/2} \right).
	\end{equation*}
	Thus,
	\begin{equation*}
		|h'(\Theta(x,t))| = (2/\theta)|\Theta(x,t)|^{(2-\theta)/\theta} \leq C_{T,\theta}\bra{|\uoneeps|^{(2-\theta)/2}+|\utwoeps|^{(2-\theta)/2}}.
	\end{equation*}
	We then can estimate further with H\"older's inequality
	\begin{align*}
		&\intQT|q(\veps)\uoneeps - p(\veps)\utwoeps|\dx\dt\\
		&\leq C_{T,\theta}\intQT \left|(q(\veps)\uoneeps)^{\theta/2} - (p(\veps)\utwoeps)^{\theta/2}\right|\bra{|\uoneeps|^{(2-\theta)/2}+|\utwoeps|^{(2-\theta)/2}}\dx\dt\\
		&\leq C_{T,\theta}\|(q(\veps)\uoneeps)^{\theta/2} - (p(\veps)\utwoeps)^{\theta/2}\|_{L^2_{x,t}}\left(\|\uoneeps\|_{L^{2-\theta}_{x,t}}^{(2-\theta)/2}+\|\utwoeps\|_{L^{2-\theta}_{x,t}}^{(2-\theta)/2} \right)\\
		&\leq C_{T,\theta}\|(q(\veps)\uoneeps)^{\theta/2} - (p(\veps)\utwoeps)^{\theta/2}\|_{L^2_{x,t}}
	\end{align*}
	due to the bounds of $\uoneeps$ and $\utwoeps$ in part (i) of Proposition \ref{main-estimates}. From this, it follows from \eqref{e10} that $\|q(\veps)\uoneeps - p(\veps)\utwoeps\|_{L^1_{x,t}} \xrightarrow{\eps\to 0} 0$ and consequently \eqref{bb2}.
\end{proof}

\subsection{Passing to the limit $\eps\to 0$}
We start with the strong convergence of $\{\veps\}_{\eps>0}$.
\begin{proposition} \label{prop:03}
	Assume \eqref{A1}, \eqref{A2}, \eqref{A3}. Then there exists $v \in L^2_{x,t}$ and a subsequence of $\{\veps\}_{\eps>0}$ (denoted by $\{\veps\}_{\eps>0}$) such that
	\begin{equation} \label{con:02}
		\veps \to v \quad \text{strongly in } L^{2}_{x,t}.
	\end{equation}
\end{proposition}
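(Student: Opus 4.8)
The plan is to deduce the strong $L^2_{x,t}$ convergence of $\{\veps\}_{\eps>0}$ from an Aubin--Lions--Simon compactness argument, for which the uniform bounds collected in part (ii) of Proposition~\ref{main-estimates} are tailor-made. Since assumption \eqref{A3} guarantees \eqref{estimata-v-2+delta}, there is some $\delta>0$ independent of $\eps$ and a constant $C_T$ such that
\begin{equation*}
	\|\veps\|_{L^{2+\delta}_{x,t}} + \|\na\veps\|_{L^{2+\delta}_{x,t}} + \|\pa_t\veps\|_{L^{2+\delta}_{x,t}} \le C_T.
\end{equation*}
The first two bounds say precisely that $\{\veps\}_{\eps>0}$ is bounded in $L^{2+\delta}(0,T;W^{1,2+\delta}(\Omega))$, while the third says that $\{\pa_t\veps\}_{\eps>0}$ is bounded in $L^{2+\delta}(0,T;L^{2+\delta}(\Omega))$. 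These are exactly the two ingredients needed for time--space compactness.

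First I would set up the triple $W^{1,2+\delta}(\Omega)\hookrightarrow\hookrightarrow L^{2+\delta}(\Omega)\hookrightarrow L^{2+\delta}(\Omega)$, where the first embedding is compact by the Rellich--Kondrachov theorem (valid since $\Omega$ is bounded with a sufficiently smooth boundary) and the second is the identity. Applying the Aubin--Lions--Simon lemma with exponent $p=2+\delta\in(1,\infty)$ then yields that $\{\veps\}_{\eps>0}$ is relatively compact in $L^{2+\delta}(0,T;L^{2+\delta}(\Omega))=L^{2+\delta}_{x,t}$. Hence there is a subsequence (not relabeled) and a limit $v$ such that $\veps\to v$ strongly in $L^{2+\delta}_{x,t}$.

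Finally, since $Q_T$ has finite measure and $2+\delta>2$, Hölder's inequality gives $\|\veps-v\|_{L^2_{x,t}}\le |Q_T|^{\frac{\delta}{2(2+\delta)}}\|\veps-v\|_{L^{2+\delta}_{x,t}}$, so the same subsequence converges strongly in $L^2_{x,t}$, which is \eqref{con:02}; in particular $v\in L^2_{x,t}$, and passing to a further subsequence $\veps\to v$ a.e.\ in $Q_T$, a fact that I expect to reuse when identifying the limit equation and when invoking the lower bound from Lemma~\ref{lem:lower_bound} to control $p(\veps),q(\veps)$.

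I do not anticipate a genuine obstacle here: the entire difficulty has already been absorbed into establishing \eqref{estimata-v-2+delta}, in particular the uniform bound on $\pa_t\veps$ obtained via the heat regularisation, which is the non-trivial input that makes the time-compactness half of Aubin--Lions available. The only points that require minor care are checking that all three norms above carry the common exponent $2+\delta$ (so that a single triple and a single $p$ suffice) and that the compact embedding indeed applies on $\Omega$; both are immediate under the standing assumptions.
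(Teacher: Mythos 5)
Your proof is correct and follows essentially the same route as the paper's: both deduce relative compactness of $\{\veps\}_{\eps>0}$ from the uniform bounds in \eqref{estimata-v-2+delta} via the Aubin--Lions lemma. The only cosmetic difference is your choice of triple ($W^{1,2+\delta}(\Omega)\hookrightarrow\hookrightarrow L^{2+\delta}(\Omega)\hookrightarrow L^{2+\delta}(\Omega)$, versus the paper's use of $L^{2+\delta_0}_tH^1_x$), which gives you the marginally stronger $L^{2+\delta}_{x,t}$ convergence before passing to $L^2_{x,t}$ by H\"older.
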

\begin{proof}
It follows from \eqref{estimata-v-2+delta} that $\veps \in L^{2+\delta_0}_tH^1_x$ and $\partial_t \veps \in L^{2+\delta_0}_{x,t}$.
By the Aubin-Lions compactness lemma \cite{Roubicek-2013}, p.~208,
%\footnote{(Aubin-Lions' lemma) Let $r \in (1, \infty)$, $s \in [1, \infty]$ and $V_2$ be a Banach space and $V_1$, $V_3$ be separable and reflexive Banach spaces such that $V_1 \subset \subset V_2 \subset V_3$, then
%\[\left\{ u \in L^r(0,T;V_1), \frac{\textnormal{d} u}{\textnormal{d} t} \in L^s(0,T;V_3) \right\} \subset \subset L^{r}(0,T; V_2).\]
%For example, $r=s=2$ and $V_1 = H^1(\Omega)$, $V_2 = L^2(\Omega)$ and $V_3 = (H^1(\Omega))'$, or as we shall see in the proof of Theorem \ref{thm:01}, $r = \frac{4}{3}$, $s = 1$, $V_1 = W^{1,\frac{4}{3}}(\Omega)$, $V_2 = L^{\frac{4}{3}}(\Omega)$ and $V_3 = (W^{1,2}(\Omega))' = W^{-2,1}(\Omega)$.}
the sequence $\{\veps\}_{\eps>0}$ is relatively compact in $L^2_{x,t}$. Thus there exists $v \in L^2_{x,t}$ and a subsequence of $\{\veps\}_{\eps > 0}$ (denoted again by $\{\veps\}_{\eps > 0}$) such that $\veps \to v$ strongly in $L^{2}(Q_T)$.
\end{proof}

The next lemma shows the strong convergence of $\uoneeps$ and $\utwoeps$.
\begin{lemma}\label{lem2}
	Assume \eqref{A1}, \eqref{A2}, \eqref{A3}. Then there exist $u_1, u_2 \in L^{2+\delta}_{x,t}$ such that, 
	\begin{equation*}
		u_1 = \frac{p(v)(u_1+u_2)}{q(v) + p(v)} \quad \text{ and } \quad u_2 = \frac{q(v)(u_1 + u_2)}{q(v) + p(v)},
	\end{equation*}
	and, up to a subsequence,
	\begin{equation*}
		\|\uoneeps - u_1\|_{L^2_{x,t}} + \|\utwoeps-u_2\|_{L^2_{x,t}} \xrightarrow{\eps\to 0} 0.
	\end{equation*}
\end{lemma}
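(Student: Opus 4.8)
The plan is to first obtain strong $L^2_{x,t}$ convergence of the \emph{total} density $\ueps := \uoneeps + \utwoeps$, and then to reconstruct the individual limits from the algebraic relations that hold in the limit. The obstruction to applying a compactness argument directly to $\uoneeps$ or $\utwoeps$ is that their time derivatives contain the singular term $\pm\frac{1}{\eps}\bra{q(\veps)\uoneeps - p(\veps)\utwoeps}$, which is not controlled uniformly as $\eps\to 0$. The remedy, which is the key step, is that adding the first two equations of \eqref{eq:01} cancels this term, leaving
\[
\pa_t \ueps = \Delta\bra{d\uoneeps + (d+D)\utwoeps} + a_1(1-\ueps)\uoneeps + a_2(1-\ueps)\utwoeps.
\]
By part (i) of Proposition \ref{main-estimates} the flux $d\uoneeps + (d+D)\utwoeps$ is bounded in $L^{2+\delta_0}_{x,t}$ and the reaction terms are bounded in $L^{(2+\delta_0)/2}_{x,t}$, both uniformly in $\eps$; hence $\pa_t\ueps$ is bounded in $L^{s_0}(0,T;(W^{2,\sigma}(\Omega))^*)$ for some $s_0>1$ and $\sigma$ large. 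On the other hand, parts (i) and (iii) of Proposition \ref{main-estimates} show that $\ueps$ is bounded in $L^{1+\xi}(0,T;W^{1,1+\xi}(\Omega))$. Since $W^{1,1+\xi}(\Omega) \hookrightarrow\hookrightarrow L^{1+\xi}(\Omega) \hookrightarrow (W^{2,\sigma}(\Omega))^*$, the Aubin--Lions--Simon lemma yields a subsequence with $\ueps \to u$ strongly in $L^{1+\xi}_{x,t}$, hence a.e. in $Q_T$; the uniform $L^{2+\delta_0}_{x,t}$ bound then upgrades this, via Vitali's theorem, to $\ueps \to u$ strongly in $L^2_{x,t}$.

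Next I would recover $\uoneeps$ and $\utwoeps$. Writing $\rho^\eps := q(\veps)\uoneeps - p(\veps)\utwoeps$, the identity $\ueps = \uoneeps + \utwoeps$ together with solving the resulting $2\times 2$ linear system gives pointwise
\[
\uoneeps = \frac{p(\veps)\ueps + \rho^\eps}{p(\veps)+q(\veps)}, \qquad \utwoeps = \frac{q(\veps)\ueps - \rho^\eps}{p(\veps)+q(\veps)}.
\]
By Proposition \ref{prop:03}, $\veps\to v$ strongly in $L^2_{x,t}$, so along a further subsequence $\veps\to v$ a.e., and continuity of $p,q$ gives $p(\veps)\to p(v)$, $q(\veps)\to q(v)$ a.e. The denominator stays bounded away from zero thanks to the lower bounds $p(\veps), q(\veps)\ge \eta_T$ of Lemma \ref{lem:lower_bound}, while $\rho^\eps\to 0$ a.e. by Lemma \ref{lem:new_estimate_2} and $\ueps\to u$ a.e. from the previous step. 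Passing to the limit in the two displayed identities therefore yields $\uoneeps\to u_1$ and $\utwoeps\to u_2$ a.e. in $Q_T$, where
\[
u_1 := \frac{p(v)\,u}{p(v)+q(v)}, \qquad u_2 := \frac{q(v)\,u}{p(v)+q(v)},
\]
and $u = u_1+u_2$ a.e., which is exactly the claimed representation.

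Finally, the uniform $L^{2+\delta_0}_{x,t}$ bounds from part (i) of Proposition \ref{main-estimates} make the families $\{|\uoneeps|^2\}$ and $\{|\utwoeps|^2\}$ uniformly integrable; combined with the a.e.\ convergence just obtained, Vitali's theorem promotes it to $\|\uoneeps - u_1\|_{L^2_{x,t}} + \|\utwoeps - u_2\|_{L^2_{x,t}} \to 0$. The same $L^{2+\delta_0}$ bounds together with Fatou's lemma give $u_1, u_2\in L^{2+\delta_0}_{x,t}\subset L^{2+\delta}_{x,t}$, which completes the argument. I expect the only delicate point to be the bookkeeping of integrability exponents ensuring that $\pa_t\ueps$ lies in a space $Y$ with $L^{1+\xi}(\Omega)\hookrightarrow Y$ and $W^{1,1+\xi}(\Omega)\hookrightarrow\hookrightarrow L^{1+\xi}(\Omega)$, so that Aubin--Lions--Simon applies; everything else is routine once the singular term has been removed by summation.
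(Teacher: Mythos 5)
Your proposal is correct and follows essentially the same route as the paper: sum the two equations to cancel the singular exchange term, obtain strong compactness of $\uoneeps+\utwoeps$ via a uniform bound on $\pa_t(\uoneeps+\utwoeps)$ in a negative space together with the gradient bounds of Proposition \ref{main-estimates} (iii) and Aubin--Lions, then recover $\uoneeps$ and $\utwoeps$ individually from the identity $\uoneeps = \bigl(p(\veps)(\uoneeps+\utwoeps) + [q(\veps)\uoneeps - p(\veps)\utwoeps]\bigr)/\bigl(q(\veps)+p(\veps)\bigr)$ using Lemma \ref{lem:new_estimate_2} and the lower bounds of Lemma \ref{lem:lower_bound}, and finally upgrade a.e.\ convergence to $L^2_{x,t}$ via the uniform $L^{2+\delta_0}_{x,t}$ bounds. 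The only cosmetic difference is that the paper places $\pa_t(\uoneeps+\utwoeps)$ in $L^{\varkappa'}_t(W^{1,\varkappa}_x)'$ by integrating the Laplacian by parts once, whereas you keep it against second-order test functions; both are valid.
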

\begin{proof}
	Thanks to part (i) of Proposition \eqref{main-estimates}, there exist $u_1, u_2\in L^{2+\delta_0}_{x,t}$ such that
	\begin{equation*}
		\uoneeps \rightharpoonup u_1 \;\text{ and } \; \utwoeps \rightharpoonup u_2 \quad \text{ weakly in } \quad L^{2+\delta_0}_{x,t}.
	\end{equation*}
	Unlike the case of $\veps$, it seems not possible to apply the Aubin-Lions lemma directly to $\uoneeps$ and $\utwoeps$ to get the strong convergence since there is no control (which is uniform in $\eps>0$) of their time derivative. We overcome this difficulty by first showing
	\begin{equation}\label{convergence-sum-u}
		\uoneeps + \utwoeps \to u_1 + u_2 \; \text{ strongly in } \; L^2_{x,t}.
	\end{equation}
	Denote by 
	\begin{equation*}
		\varkappa = \max\left\{\frac{1+\xi}{\xi}; \frac{2+\delta_0}{\delta_0} \right\}
	\end{equation*}
	with $\delta_0$ in \eqref{A2} and $\xi$ in \eqref{u-gradient}. Summing the first and second equations of \eqref{eq:01}, then testing the resultant with $\varphi \in L^{\varkappa}_tW^{1,\varkappa}_x$ give
	\begin{equation}\label{bb1}
	\begin{aligned}
		\intQT \pa_t(\uoneeps + \utwoeps)\varphi \dx\dt &= -d\intQT \nabla \uoneeps \cdot \nabla \varphi \dx\dt - (d+D)\intQT \nabla \utwoeps \cdot \na \varphi \dx\dt\\
		&\; +  \intQT (a_1\uoneeps + a_2\utwoeps)(1-\ueps)\varphi \dx\dt.
	\end{aligned}
	\end{equation}
	By using H\"older inequality, it holds for $i=1,2$,
	\begin{equation*}
		\left| \intQT \na u_i^{\eps} \cdot \na\varphi\dx\dt\right| \leq \|\na u_i^{\eps}\|_{L^{1+\xi}_{x,t}}\|\na \varphi\|_{L^{(1+\xi)/{\xi}}_{x,t}} \leq C_T\|\na \varphi\|_{L^{(1+\xi)/\xi}_{x,t}} \leq C_T\|\varphi\|_{L^{\varkappa}_tW^{1,\varkappa}_x}
	\end{equation*}
	thanks to part (iii) of Proposition \ref{main-estimates}. For the third integral on the right hand side of \eqref{bb1}, we use part (i)	 of Proposition \ref{main-estimates} and H\"older's inequality to estimate
	\begin{equation*}
	\begin{aligned}
		\left|\intQT(a_1\uoneeps + a_2\utwoeps)(1-\ueps)\varphi\dx\dt \right| &\leq C\intQT \left(1+|\uoneeps|^2 + |\utwoeps|^2\right)\varphi \dx\dt\\
		&\leq C\left(\|\varphi\|_{L^1_{x,t}} + \bra{\|\uoneeps\|_{L^{2+\delta_0}_{x,t}}^2+\|\utwoeps\|_{L^{2+\delta_0}_{x,t}}^2}\|\varphi\|_{L^{(2+\delta_0)/\delta_0}_{x,t}} \right)\\
		&\leq C_T\|\varphi\|_{L^{(2+\delta_0)/\delta_0}_{x,t}} \leq C_T\|\varphi\|_{L^{\varkappa}_tW^{1,\varkappa}_x}.
	\end{aligned}		
	\end{equation*}
	Therefore, by duality, we obtain from \eqref{bb1} that
	\begin{equation*}
		\{\pa_t(\uoneeps+\utwoeps) \}_{\eps>0} \; \text{ are bounded in } \; L^{\varkappa'}_t(W^{1,\varkappa}_x)' \; \text{ uniformly in } \; \eps>0,
	\end{equation*}
	where $\varkappa' = \varkappa/(\varkappa-1)$ and $(W^{1,\varkappa}_x)'$ is the dual space of $W^{1,\varkappa}_x$. Now we can apply the Aubin-Lions lemma to obtain some $\chi \in L^2_{x,t}$ such that, up to a subsequence,
	\begin{equation*}
		\uoneeps + \utwoeps \xrightarrow{\eps\to 0} \chi \quad \text{ in } \quad L^{2}_{x,t}.
	\end{equation*}
	By uniqueness of weak limit, it is clear that $\chi = u_1 + u_2$. To prove the strong convergence for $\uoneeps$ and $\utwoeps$ separately, we rewrite
	\begin{equation*}
	\uoneeps = \frac{p(\veps)(\uoneeps + \utwoeps) + [q(\veps) \uoneeps - p(\veps) \utwoeps]}{q(\veps) + p(\veps)}.
	\end{equation*}
	Thanks to \eqref{convergence-sum-u} and \eqref{bb2}, it yields
	\begin{equation}\label{bb3}
		\uoneeps \xrightarrow{\eps\to 0} \frac{p(v)(u_1 + u_2)}{q(v)+p(v)} \quad \text{ a.e. in } \; Q_T.
	\end{equation}
	Note that $\{\uoneeps\}$ is bounded in $L^{2+\delta_0}_{x,t}$. Therefore, the convergence in \eqref{bb3} is in fact strong in $L^2_{x,t}$. Thus, we have
	\begin{equation*}
		u_1 = \frac{p(v)(u_1 + u_2)}{q(v)+p(v)} \quad \text{ and } \quad \lim_{\eps\to 0}\|\uoneeps - u_1\|_{L^2_{x,t}} = 0.
	\end{equation*}
	The relations
	\begin{equation*}
		u_2 = \frac{q(v)(u_1+u_2)}{q(v) + p(v)} \quad \text{ and } \quad \lim_{\eps\to 0}\|\utwoeps-u_2\|_{L^2_{x,t}} = 0
	\end{equation*}
	follows the same way by rewriting
	\begin{equation*}
	\utwoeps = \frac{q(\veps)(\uoneeps + \utwoeps) - [q(\veps) \uoneeps - p(\veps) \utwoeps]}{q(\veps) + p(\veps)}.
	\end{equation*}
\end{proof}
We are now ready to prove the main results in Theorem \ref{thm:01}.
\begin{proof}
Adding \eqref{eq:w01} and \eqref{eq:w02} together, we obtain
\begin{equation} \label{eq:22} \begin{aligned}
-\iint_{Q_T} (\uoneeps+\utwoeps) \partial_t \varphi \dx \dt & - \int_{\Omega} (u_{1,0}+u_{1,0}) \varphi(0,x) \dx - \iint_{Q_T} (d\uoneeps + (d+D)\utwoeps)\Delta \varphi \dx \dt \\
& = \iint_{Q_T} (1 - (\uoneeps + \utwoeps))(a_1 \uoneeps + a_2 \utwoeps) \varphi \dx \dt
\end{aligned}\end{equation}
for $\varphi \in C^{2,1}(\overline{Q}_T)$ which satisfies $\varphi(x,T) = 0$ and $\partial_{\nu} \varphi = 0$ on $\partial \Omega \times [0,T]$.
Due to Lemma \ref{lem2}, the following convergences as $\eps\to 0$ are immediate
\begin{equation*}
	-\intQT (\uoneeps + \utwoeps)\pa_t\varphi\dx\dt \to -\intQT u\pa_t\varphi \dx\dt,
\end{equation*}
\begin{equation*}
	-\intQT (d\uoneeps + (d+D)\utwoeps)\Delta \varphi\dx\dt \to -\intQT \bra{d + D\frac{q(v)}{q(v)+p(v)}}u\Delta \varphi\dx\dt.
\end{equation*}
The convergence
\begin{equation*}
	\iint_{Q_T} \bra{1-(\uoneeps + \utwoeps)}(a_1 \uoneeps + a_2 \utwoeps) \varphi \dx \dt \to \iint_{Q_T} \bra{1-(u_1 + u_2)}(a_1 u_1 + a_2 u_2) \varphi \dx \dt.
\end{equation*}
follows from the strong convergence $\uoneeps \to u_1$ and $\utwoeps \to u_2$ in Lemma \ref{lem2}. By substituting $u_1 = q(v)u/(q(v) + p(v))$ and $u_2 = p(v)u/(q(v)+p(v))$ we obtain in fact
\[ \begin{aligned} 
\iint_{Q_T} (1 - (\uoneeps + \utwoeps))(a_1 \uoneeps + a_2 \utwoeps) \varphi \dx \dt \to  \iint_{Q_T}  \frac{a_1 p(v) + a_2 q(v)}{q(v)+p(v)} (1 - u)u   \varphi \dx \dt
\end{aligned}\]
as $\eps \to 0$.
Inserting these convergences into \eqref{eq:22} gives that $u = u_1+u_2$ satisfies \eqref{eq:w04}, which concludes the proof of the theorem.
\end{proof}

\appendix
\section{Proof of Lemma \ref{maximal_regularity}}\label{appA}
\newcommand{\wh}{\widehat}
By \cite[Theorem 1]{lamberton1987equations}, if $f \in L^r_{x,t}$ and $u$ is the solution to
\begin{equation}\label{eq_u}
\begin{cases}
	\pa_t u - \Delta u = f, &(x,t)\in Q_T,\\
	\nabla u \cdot \nu = 0, &(x,t)\in \partial\Omega\times(0,T),\\
	u(x,0) = 0, &x\in\Omega,
\end{cases}
\end{equation}
then there exists a constant $C_{\mr,q}>0$ independent of $T$ such that 
\begin{equation}\label{mr}
	\norm{\Delta u}_{L^r_{x,t}} \leq C_{\mr,q}\norm{f}_{L^r_{x,t}}.
\end{equation}
At the first glance, \eqref{dual} looks like a backward heat equation, but by defining $\wh{\phi}(x,t) = \phi(x,t/m)$ and $\tau = T-t$, it becomes the usual forward parabolic equation
\begin{equation*}
\begin{cases}
\pa_\tau \wh{\phi}(x,\tau) - \Delta \wh\phi(x,\tau) = \frac{1}{m}\wh{\theta}(x,\tau),  & x\in\Omega,  \tau\in (0,mT),\\
\nabla \wh{\phi}(x,\tau)\cdot \nu = 0, &x\in\pa\Omega, \tau\in (0,mT),\\
\wh{\phi}(x,0) = 0
\end{cases}
\end{equation*}
where $\wh{\theta}(x,\tau) = \theta(x,\tau/m) = \theta(x,(T-t)/m)$.  By applying \eqref{mr}, it follows that
\begin{equation*}
\norm{\Delta \wh{\phi}}_{L^r(0,mT;L^r(\Omega))} \leq C_{\mr,q}\norm{\frac 1m\wh{\theta}}_{L^r(0,mT;L^r(\Omega))} = \frac{C_{\mr,q}}{m}\norm{\wh{\theta}}_{L^r(0,mT;L^r(\Omega))}.
\end{equation*}
Therefore,
\begin{equation*}
	\iint_{Q_{mT}}|\Delta \wh\phi(x,\tau)|^qdxd\tau \leq \left(\frac{C_{\mr,q}}{m}\right)^q\iint_{Q_{mT}}|\wh\theta(x,\tau)|^qdxd\tau.
\end{equation*}
Making the change of variable $s = \tau/m$ yields
\begin{equation*}
	m\norm{\Delta \phi}_{L^r_{x,t}}^r \leq \frac{C_{\mr,q}^q}{m^{q-1}}\norm{\theta}_{L^r_{x,t}}^r
\end{equation*}
which implies immediately the desired estimate \eqref{max}.

To show \eqref{pstar}, we first prove that $C_{\mr,2} \leq 1$. Indeed, by multiplying \eqref{eq_u} with $-\Delta u$ then integrating on $Q_T$, we get
\begin{equation*}
	\frac 12\int_{\Omega}|\nabla u(\cdot,T)|^2dx + \iint_{Q_T}|\Delta u|^2dxdt  = -\iint_{Q_T}f\Delta u dxdt \leq \frac 12\norm{f}_{L^2_{x,t}}^2 + \frac 12\norm{\Delta u}_{L^2_{x,t}}^2
\end{equation*}
and thus
\begin{equation*}
	\norm{\Delta u}_{L^2_{x,t}} \leq \norm{f}_{L^2_{x,t}}
\end{equation*}
which implies $C_{\mr,2} \leq 1$. It is obvious that for any $\omega_1,\omega_2>0$,
\begin{equation*}
	\frac{|\omega_1-\omega_2|}{\omega_1+\omega_2}C_{\mr,2} < 1.
\end{equation*}
Therefore, it is sufficient to show that 
\begin{equation*}
	C_{\mr,2}^{-}:= \liminf_{\eta\to 0^+}C_{\mr,2-\eta} \leq C_{\mr,2}.
\end{equation*}
By the Riesz-Thorin interpolation theorem, cf. \cite[Chapter 2]{lunardi2018interpolation}, we have for small enough $\eta>0$
\begin{equation*}
	C_{\mr,2_\eta} \leq C_{\mr,2}^{1/2}C_{\mr,2-\eta}^{1/2}
\end{equation*}
where
\begin{equation*}
	2_\eta = 2 - \frac{2\eta}{4-\eta}.
\end{equation*}
Letting $\eta \to 0$ yields
\begin{equation*}
	C_{\mr,2}^{-} \leq C_{\mr,2}^{1/2}\left(C_{\mr,2}^{-}\right)^{1/2},
\end{equation*}
which gives consequently $C_{\mr,2}^{-} \leq C_{\mr,2}$ and the proof of Lemma \ref{maximal_regularity} is therefore complete.

\section{Global weak solutions for $\eps>0$}\label{appB}
\newcommand{\udel}{u_1^{\delta}}
\newcommand{\uudel}{u_2^{\delta}}
\newcommand{\vdel}{v^{\delta}}
\begin{proof}[Proof of Proposition \ref{prop:01}]
We provide in this section the proof of global weak solutions in Proposition \ref{prop:01}. For simplicity, we drop the superscript $\eps$ as it is fixed in the whole proof. Denote by 
\begin{align*}
	R(u_1,u_2,v):= \frac{1}{\eps}q(v)u_1 - p(v)u_2.
\end{align*}
Let $\delta>0$, we consider the following approximation system of \eqref{eq:01},
\begin{equation}\label{approx_sys}
\begin{cases}
	\pa_t \udel - D\Delta \udel = a_1(1-\udel - \uudel)\udel - R(\udel,\uudel,\vdel)\big[1+\delta|R(\udel,\uudel,\vdel)|\big]^{-1},\\
	\pa_t \uudel - (d+D)\Delta \uudel = a_2(1 - \udel - \uudel)\uudel + R(\udel,\uudel,\vdel)\big[1+\delta|R(\udel,\uudel,\vdel)|\big]^{-1},\\
	\pa_t \vdel - D_v\Delta \vdel = \alpha(\udel + \uudel) - \beta \vdel
\end{cases}
\end{equation}
subject to homogeneous Neumann boundary condition
\begin{equation*}
	\nabla \udel \cdot \nu(x) = \na \uudel \cdot \nu(x) = \na \vdel \cdot \nu(x) = 0, \text{ on } \pa\Omega\times(0,T),
\end{equation*}
and non-negative bounded initial data
\begin{equation*}
	\udel(x,0) = u_{1,0}^{\delta}(x), \quad \uudel(x,0) = u_{2,0}^{\delta}(x), \quad \vdel (x,0) = \vdel_0(x), \quad \text{ for } x\in\Omega,
\end{equation*}
We require that
\begin{equation*}
	\|u_{1,0}^{\delta} - u_{1,0}\|_{L^{2+\kappa}_x} + \|u_{2,0}^{\delta} - u_{2,0}\|_{L^{2+\kappa}_x} + \|\vdel_0 - v_0\|_{L^\infty_x} \xrightarrow{\delta \to 0} 0.
\end{equation*}
Denote by $f_1(\udel, \uudel, \vdel), f_2(\udel, \uudel, \vdel), f_3(\udel, \uudel, \vdel)$ the right-hand sides of \eqref{approx_sys}. It's immediate to check that these nonlinearities are at most polynomial, locally Lipschitz continuous, quasi-positive, i.e. for any $i\in \{1,2,3\}$,
\begin{equation*}
	f_i(z_1,z_2,z_3) \geq 0 \quad \text{ for all } \; z\in \mathbb R_+^3 \; \text{with} \; z_i = 0,
\end{equation*}
and are bounded above by linear functions, i.e. for any $i\in \{1,2,3\}$,
\begin{equation*}
	f_i(z_1,z_2,z_3) \leq C\bra{1+z_1+z_2+ \frac{1}{\delta}} \quad \text{ for all } \; z\in \mathbb R_+^3.
\end{equation*}
The classical theory of reaction-diffusion systems, see e.g. \cite{morgan1989global}, shows that there exists a unique global non-negative bounded solutions.

We now aim to pass to the limit as $\delta\to 0$. By summing the first two equations of \eqref{approx_sys}, we have
\begin{equation*}
	\pa_t(\udel + \uudel) - \Delta(D\udel + (D+d)\uudel) \leq \max\{a_1,a_2\}(\udel + \uudel).
\end{equation*}
Thanks to the improved duality as in the proof of part (i) of Proposition \ref{main-estimates} (or from \cite{canizo2014improved}), we have the following estimates
\begin{equation}\label{bounduu}
	\|\udel\|_{L^{2+\kappa}_{x,t}} + \|\uudel\|_{L^{2+\kappa}_{x,t}} \leq C_T
\end{equation}
where the constant $C_T$ might depend on $T>0$, $\|u_{1,0}^{\delta}\|_{L^{2+\kappa}_x}$, $\|u_{2,0}^{\delta}\|_{L^{2+\kappa}_x}$, $D$, $d$, $a_1, a_2$, \textit{but not on $\delta$}. From the heat regularisation from Lemma \ref{lem:heat_gradient}, it follows that when either \eqref{A3.1} or \eqref{A3.3} holds
\begin{equation}\label{boundv}
	\|\vdel\|_{L^{\infty}_{x,t}} \leq C_T,
\end{equation}
and when \eqref{A3'-b} holds,
\begin{equation}\label{boundv1}
	\|\vdel\|_{L^{\frac{(N+2)(2+\kappa)}{N-2(1+\kappa)}}_{x,t}} \leq C_T.
\end{equation}
From this and the growth condition in \eqref{A3'-b}, we have
\begin{equation}\label{bound_pandq}
	\|p(\vdel)\|_{\LQ{\lambda}} + \|q(\vdel)\|_{\LQ{\lambda}} \leq C_T
\end{equation}
where
\begin{equation*}
	\lambda:= \frac{(N-2)(2+\kappa)}{N-2(1+\kappa)} \geq 2 + \kappa.
\end{equation*}
From \eqref{bounduu} and \eqref{bound_pandq} and
\begin{align*}
	|f_1(\udel,\uudel,\vdel)| &\leq a_1(\udel + (\udel)^2 + \udel \uudel) + |R(\udel,\uudel,\vdel)|\\
	&\leq a_1(\udel + (\udel)^2 + \udel \uudel) + \frac 1\eps |q(\vdel)| \udel + \frac 1\eps |p(\vdel)|\uudel
\end{align*}
we conclude that
\begin{equation*}
	\|\pa_t \udel - D\Delta \udel\|_{\LQ{1+\frac{\kappa}{2}}} \leq C_{T,\eps}
\end{equation*}
where the constant $C_{T,\eps}$ \textit{does not depend on} $\delta$. Similarly,
\begin{equation*}
	\|\pa_t \uudel - (D+d)\Delta \uudel\|_{\LQ{1+\frac{\kappa}{2}}} \leq C_{T,\eps}
\end{equation*}
and
\begin{equation*}
	\|\pa_t \vdel - D_v\Delta \vdel\|_{\LQ{2+\kappa}} \leq C_{T,\eps}.
\end{equation*}
Classical Aubin-Lions lemma shows that, there exist a subsequence of $\{\udel,\uudel,\vdel\}$ (which we will not relabel) and a non-negative triple $(u_1,u_2,v)\in \LQ{1+\kappa/2}\times\LQ{1+\kappa/2} \times \LQ{2+\kappa}$ such that
\begin{equation*}
	\|\udel - u_1\|_{\LQ{1+\kappa/2}} + \|\uudel - u_2\|_{\LQ{1+\kappa/2}} + \|\vdel - v\|_{\LQ{2+\kappa}} \xrightarrow{\delta \to 0} 0.
\end{equation*}
Thanks to these strong convergence and the bounds \eqref{bounduu}, \eqref{boundv} or \eqref{boundv1} (depending $N$), and \eqref{bound_pandq}, we can deduce the following strong convergence in $\LQ{1}$ as $\delta \to 0$
\begin{equation}\label{strongL1}
	(\udel)^2 \to u_1^2, \quad (\uudel)^2 \to u_2^2, \quad \udel \uudel \to u_1u_2, \quad q(\vdel)\udel \to q(v)u_1, \quad p(\vdel)\uudel \to p(v)u_2.
\end{equation}
Consequently, we also have
\begin{equation}\label{strongL1_1}
	R(\udel,\uudel,\vdel)\big[1+\delta|R(\udel,\uudel,\vdel)|\big]^{-1} \longrightarrow R(u_1,u_2,v) \quad \text{ strongly in } \; \LQ{1}.
\end{equation}
Now we can test the system \eqref{approx_sys} against a test function $\varphi \in C^{2,1}(\overline{Q_T})$ satisfying $\varphi(x,T) = 0$ and $\na \varphi \cdot \nu = 0$ on $\pa\Omega\times(0,T)$, and let $\delta \to 0$. The strong convergence in \eqref{strongL1} and \eqref{strongL1_1} allows us to conclude that $(u_1,u_2,v)$ is a global weak solution to \eqref{eq:01}, which finishes the proof of Propositions \ref{prop:01}.
\end{proof}

% Acknowledgements
\section*{Acknowledgements}
We would like to thank Dr. Cinzia Soresina for the fruitful discussion concerning the part of fast reaction limit.

%\include{Ref}
%\section*{Bibliography} 
\newcommand{\etalchar}[1]{$^{#1}$}

%\bibliography{/Users/janelias/Dropbox/TeXts/biblio}

\medskip
\hspace*{0.85in}\textit{E-mail adress:} \href{mailto:jan.elias@uni-graz.at}{jan.elias@uni-graz.at}

\hspace*{0.85in}\textit{E-mail adress:} \href{mailto:izuhara@cc.miyazaki-u.ac.jp}{izuhara@cc.miyazaki-u.ac.jp}

\hspace*{0.85in}\textit{E-mail adress:} \href{mailto:mimura.masayasu@gmail.com}{mimura.masayasu@gmail.com}

\hspace*{0.85in}\textit{E-mail adress:} \href{mailto:quoc.tang@uni-graz.at}{quoc.tang@uni-graz.at}

\end{document}